\newenvironment{thmenum}{

\begin{enumerate}}
{\end{enumerate}}
\DeclareMathOperator{\Hyp}{Hyp}
\DeclareMathOperator{\disc}{disc}
\DeclareMathOperator{\PGO}{PGO}
\DeclareMathOperator{\Sn}{Sn}
\DeclareMathOperator{\End}{End}
\DeclareMathOperator{\ad}{ad}
\DeclareMathOperator{\Orth}{O}
\DeclareMathOperator{\clif}{clif}
\DeclareMathOperator{\charac}{char}
\renewenvironment{proof}[1][]{\vskip-\lastskip\par\vskip6pt plus2pt
minus0pt\par%
\noindent\textit{Proof.}\enspace\ignorespaces}{\hfill$\Box$\par\vskip6pt
plus2pt minus0pt}
\numberwithin{equation}{section}
\newtheorem{theorem}[equation]{Theorem}
\newtheorem{lemma}[equation]{Lemma}
\newtheorem{proposition}[equation]{Proposition}
\newtheorem{corollary}[equation]{Corollary}
\theoremstyle{definition}
\newtheorem{definition}[equation]{Definition}
\newtheorem{remark}[equation]{Remark}
\newtheorem{notation}[equation]{Notation}
\newcommand{\qform}[1]{{\langle{#1}\rangle}}
\begin{document}
\title[The Kneser-Tits Conjecture For $E_{8,2}^{66}$] {The Kneser-Tits
  Conjecture For Groups with Tits-Index {\boldmath \(E_{8,2}^{66}\)}
  Over an Arbitrary Field}

\author{R. Parimala}
\address{Department of Mathematics and Computer Science \\
  Emory University \\
  MSC W401 400 Dowman Drive \\
  Atlanta, GA 30322, USA} \email{parimala@mathcs.emory.edu}
\author{J.-P. Tignol} \address{Institute for Information and
  Communication Technologies,
  Electronics and Applied Mathematics \\
  Universit\'e Catholique de Louvain \\
  Avenue Georges Lema\^\i tre, 4 \\
  1348 Louvain-la-Neuve, Belgium}
\email{Jean-Pierre.Tignol@uclouvain.be} \author{R. M. Weiss}
\address{Department of Mathematics \\
  Tufts University \\
  503 Boston Avenue \\
  Medford, MA 02155, USA} \email{rweiss@tufts.edu}

\keywords{Kneser-Tits conjecture, Moufang quadrangles, exceptional
  groups, multipliers of similitudes, $R$-equivalence}
\subjclass[2000]{11E04, 20G15, 20G41, 51E12}

\date{\today}

\begin{abstract}
  We prove: (1) The group of multipliers of similitudes of a
  $12$-dimensional anisotropic quadratic form over a field $K$ with
  trivial discriminant and split Clifford invariant is generated by
  norms from quadratic extensions $E/K$ such that $q_E$ is hyperbolic.
  (2) If $G$ is the group of $K$-rational points of an absolutely
  simple algebraic group whose Tits index is $E_{8,2}^{66}$, then $G$
  is generated by its root groups, as predicted by the Kneser-Tits
  conjecture.
\end{abstract}

\maketitle

\section{Introduction}\label{abc0}

\noindent
The Kneser-Tits conjecture---first formulated in
\cite{kneser-tits}---predicts that the group of $K$-rational points
(for some field $K$ of arbitrary characteristic) of an absolutely
simple algebraic group with Tits index
$$\centerline{\epsfbox{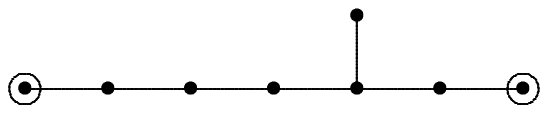}}$$ 
is generated by its root groups. This Tits-index is denoted by
$E_{8,2}^{66}$ in \cite{tits-algebraic}. Groups with this Tits index
are classified by similarity classes of anisotropic $12$-dimensional
quadratic forms over $K$ with trivial discriminant and split Clifford
invariant.  By \cite[42.6]{TW}, they are also the groups whose
corresponding spherical building is a Moufang quadrangle of type $E_8$
as defined in \cite[16.6]{TW}.

Given a quadratic form $q$ defined over a field $K$, we denote by
${\rm clif}(q)$ the Clifford invariant of $q$, by $G(q)$ the group of
multipliers of similitudes of $q$, by ${\rm Hyp}(q)$ the subgroup of
$K^\times$ generated by $K^{\times2}$ and the norms from finite
extensions $E/K$ such that $q_E$ is hyperbolic and by ${\rm Hyp}_2(q)$
the subgroup of ${\rm Hyp}(q)$ generated by $K^{\times2}$ and the
norms from \emph{quadratic} extensions $E/K$ such that $q_E$ is
hyperbolic (including inseparable ones).

Our goal is to prove the following closely related statements.

\begin{theorem}\label{abc2}
  If $q$ is an anisotropic quadratic form with trivial discriminant,
  then $G(q)={\rm Hyp}_2(q)$ in the following cases:
  \begin{thmenum}
  \item $\dim q=8$ and the index of ${\rm clif}(q)$ is~$2$;
  \item $\dim q=12$ and ${\rm clif}(q)$ is split.
  \end{thmenum}
\end{theorem}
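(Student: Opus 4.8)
The plan is to prove the two cases in a unified way by understanding the multiplier group $G(q)$ through the structure of the even Clifford algebra and relating it to norms from quadratic extensions. Since $q$ has trivial discriminant, the even Clifford algebra $C_0(q)$ decomposes as a product $C^+ \times C^-$ of two central simple $K$-algebras, and the Clifford invariant is the Brauer class of these components. The key starting observation is that $\lambda \in K^\times$ is a multiplier of a similitude of $q$ precisely when $\lambda q \cong q$, and this can be detected on the level of the Clifford algebra via the proper similitude group and the spinor-norm-type maps.

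First I would establish the inclusion $\operatorname{Hyp}_2(q) \subseteq G(q)$, which should be the routine direction: if $E = K(\sqrt{d})$ (or an inseparable quadratic extension) makes $q_E$ hyperbolic, then a trace/transfer argument shows that norms $N_{E/K}(x)$ are multipliers of $q$, and $K^{\times 2}$ is obviously contained in $G(q)$ since scaling by squares comes from a scalar similitude. The heart of the paper is the reverse inclusion $G(q) \subseteq \operatorname{Hyp}_2(q)$, and I expect this to be the main obstacle.

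For the reverse inclusion, the strategy I would pursue exploits the smallness of the dimension together with the Clifford algebra hypotheses. In case (i), $\dim q = 8$ with $\operatorname{clif}(q)$ of index $2$: here one of the Clifford components is a quaternion algebra $Q$ (up to the reduced/split structure), so I would try to realize $q$ concretely in terms of $Q$ — for instance as a form arising from $Q$ together with a trace form — and then translate the condition $\lambda q \cong q$ into a norm condition from a quadratic splitting field of $Q$. In case (ii), $\dim q = 12$ with $\operatorname{clif}(q)$ split: the split Clifford invariant means both components of $C_0(q)$ are matrix algebras, so $q$ should be expressible in terms that allow a descent argument, and the relevant quadratic extensions will be those diagonalizing a suitable part of the form. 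A natural tool throughout is to pass to a generic or rational point and use $R$-equivalence (flagged in the keywords): one shows that the multiplier group modulo $\operatorname{Hyp}_2(q)$ injects into a group of $R$-equivalence classes which can then be computed to be trivial, typically by producing explicit rational curves connecting any multiplier to an element of $\operatorname{Hyp}_2(q)$.

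The main technical difficulty, and where I would concentrate the real work, is showing that an arbitrary multiplier $\lambda$ can be written as a product of norms from \emph{quadratic} extensions that hyperbolize $q$, rather than merely from higher-degree extensions — i.e.\ proving $\operatorname{Hyp}(q) = \operatorname{Hyp}_2(q)$ for these forms and simultaneously that $G(q)$ lands inside it. This requires a careful analysis of which quadratic extensions hyperbolize $q$: since $q$ is anisotropic of even dimension with trivial discriminant, $q_E$ is hyperbolic for $E = K(\sqrt{d})$ exactly when $q$ is similar to a form containing $\langle 1, -d \rangle$ as a factor, and I would need to show that enough such $d$ exist and that their norm groups, together with $K^{\times 2}$, capture all of $G(q)$. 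The dimension constraints ($8$ and $12$) and the Clifford hypotheses are almost certainly what make this possible, via the classification of such forms (the connection to Moufang quadrangles of type $E_8$ and to $12$-dimensional forms with split Clifford invariant cited from \cite{TW}), so I would lean heavily on that structure theory to reduce the problem to a finite and tractable computation.
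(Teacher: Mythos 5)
There is a genuine gap: your proposal correctly isolates the two directions and even correctly identifies the hard point (that multipliers must be expressed as norms from \emph{quadratic} extensions hyperbolizing $q$), but it contains no mechanism for proving either the inclusion $G(q)\subseteq\Hyp(q)$ or the refinement $\Hyp(q)=\Hyp_2(q)$; every step where the real work happens is deferred to ``I would try'' or ``I would lean on the structure theory.'' Concretely, what is missing for $G(q)\subseteq\Hyp(q)$ is a decomposition argument: in the paper, case (i) rests on writing $q=\qform{\alpha}\cdot N_Q+\qform{\beta}\cdot\pi$ in the Witt group ($\pi$ a $3$-fold Pfister form), whence $G(q)=G(N_Q)\cap G(\pi)$, combined with a biquadratic norm lemma $N(E_1/K)\cap N(E_2/K)=K^{\times2}\cdot N(E_1\otimes_K E_2/K)$; and case (ii) rests on using the given similitude $\varphi$ itself to split off the $4$-dimensional subspace spanned by $\langle v,\varphi(v)\rangle$ and a matching plane, producing $q=q_1\perp q_2$ with $q_1$ similar to a quaternion norm form, $q_2$ of type $E_7$, and the multiplier lying in $G(q_1)\cap G(q_2)$ --- reducing (ii) to (i). Your ``realize $q$ in terms of $Q$ plus a trace form'' and ``descent argument'' gesture toward this but supply no construction, and in case (ii) the idea of extracting the decomposition \emph{from the similitude itself} is exactly the ingredient you do not have.

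The deeper gap concerns $\Hyp$ versus $\Hyp_2$. The biquadratic lemma above naturally produces norms from degree-$4$ extensions, so the Section-2-style argument only yields $G(q)=\Hyp(q)$; the paper needs genuinely heavy additional input to get down to quadratic extensions: either the quadrangular-algebra machinery (where $G(q)$ is identified with the group generated by the explicit values $q(\pi(a)+t)$, each of which is a norm from a quadratic extension hyperbolizing $q$, using structure results on the Moufang quadrangle of type $E_7$/$E_8$), or, in characteristic $\neq2$, triality identifying $\Hyp(q)$ with the spinor norm group $\Sn(h)$ of a rank-$4$ skew-hermitian form over the quaternion algebra, which is generated by norms of quadratic extensions. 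You propose nothing that could substitute for either. Finally, your $R$-equivalence strategy is circular as stated: by Merkurjev's theorem, $R$-triviality of $\PGO_+(q)$ is \emph{equivalent} to $G(q_E)=\Hyp(q_E)$ for all extensions $E/K$, so one cannot invoke $R$-triviality to prove the inclusion without an independent proof of $R$-triviality (in the paper the implication runs the other way: $R$-triviality is a \emph{corollary} of $G=\Hyp$); and even if it worked it would only give $\Hyp(q)$, not $\Hyp_2(q)$.
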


\begin{theorem}\label{abc3}
  If $G$ is the group of $K$-rational points of an absolutely simple
  algebraic group whose Tits index is $E_{8,2}^{66}$, then $G$ is
  generated by its root groups.
\end{theorem}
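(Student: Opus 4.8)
The plan is to deduce Theorem~\ref{abc3} from Theorem~\ref{abc2}(ii) by translating the Kneser-Tits conjecture into a statement about the multiplier group $G(q)$. First I would fix an absolutely simple $K$-group $\mathbf{G}$ of Tits index $E_{8,2}^{66}$ and set $G=\mathbf{G}(K)$. By the classification recalled in the introduction and by \cite[42.6]{TW}, there is an anisotropic $12$-dimensional quadratic form $q$ over $K$, with trivial discriminant and split Clifford invariant, whose associated Moufang quadrangle of type $E_8$ (in the sense of \cite[16.6]{TW}) is the building of $G$; the form $q$ is determined up to similarity, and every such $q$ arises. The conjecture asserts that $G=G^\dagger$, where $G^\dagger$ denotes the subgroup generated by the root groups of this building, i.e.\ that the Whitehead group $W(K,\mathbf{G}):=G/G^\dagger$ is trivial. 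Since $E_8$ is simultaneously simply connected and adjoint, there is no isogeny ambiguity in the choice of $\mathbf{G}$.

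The crucial step is to identify $W(K,\mathbf{G})$ with the quotient $G(q)/\Hyp(q)$. Here I would invoke the structure theory of Moufang quadrangles of type $E_8$ from \cite{TW}: the root groups are parametrized by the data underlying $q$, and the $\mu$-maps attached to nontrivial root-group elements act on the quadratic space as similitudes of $q$. Running through this parametrization, one checks that the multipliers of similitudes realized by elements of $G^\dagger$ are exactly the generators of $\Hyp(q)$ --- namely the squares and the norms $N_{E/K}(E^\times)$ for finite extensions $E/K$ with $q_E$ hyperbolic, the latter arising because over such $E$ the group splits and its multipliers are accounted for by norms. This yields a canonical isomorphism $W(K,\mathbf{G})\cong G(q)/\Hyp(q)$.

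Granting this identification, the theorem follows at once. Theorem~\ref{abc2}(ii) gives $G(q)=\Hyp_2(q)$, and since $\Hyp_2(q)\subseteq\Hyp(q)\subseteq G(q)$ by construction (quadratic extensions are among the finite ones, and every generator of $\Hyp(q)$ is a similitude multiplier of $q$), we obtain $\Hyp(q)=G(q)$. Hence $W(K,\mathbf{G})=1$, that is $G=G^\dagger$, which is precisely the Kneser-Tits conjecture for $G$. Note that only the weaker equality $G(q)=\Hyp(q)$ is needed here; Theorem~\ref{abc2}(ii) supplies the sharper $\Hyp_2(q)$ version, which is more than sufficient.

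I expect the main obstacle to be the identification $W(K,\mathbf{G})\cong G(q)/\Hyp(q)$ of the second step, not the final deduction. All the genuinely hard arithmetic has been absorbed into Theorem~\ref{abc2}(ii); what remains is the structural bookkeeping connecting the abstract Whitehead group of the algebraic group to the concrete invariant $G(q)/\Hyp(q)$ of the quadratic form. This demands a careful match between the root-group and $\mu$-map description of $G^\dagger$ and the similitude multipliers of $q$, together with the verification that norms from hyperbolic-splitting extensions account for all multipliers produced by $G^\dagger$ and for no others.
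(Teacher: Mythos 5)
Your overall strategy---reduce the Kneser--Tits statement to the multiplier computation of Theorem~\ref{abc2}(ii)---is indeed the link between the two theorems, but the step you yourself flag as crucial, the isomorphism $W(K,\mathbf{G})\cong G(q)/\Hyp(q)$, is asserted rather than proved, and it is not a piece of ``structural bookkeeping'': it is essentially the entire content of the paper's \S\ref{abc4}, and even there it is never established in the form you state. Two concrete gaps. First, the surjectivity/well-definedness half: what can actually be extracted from the root-group parametrization (Proposition~\ref{abc99}, via Proposition~\ref{abc12} and the explicit formulas of \cite{weiss-crelle}) is that the multipliers realized by $H^\dagger=H_0\cap G^\dagger$ form the group $M$ generated by the values $q(\pi(a)+t)$ of the quadrangular-algebra map $\pi$, and that $M\subseteq\Hyp_2(q)$; the statement that these multipliers exhaust $\Hyp(q)$ is \emph{equivalent} to $G(q)=M$, i.e.\ to Proposition~\ref{abc16}, which is the substance of Theorem~\ref{abc2}(ii) itself. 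So your identification quietly presupposes the theorem it is supposed to combine with, and obtaining it requires the full quadrangular-algebra machinery (Propositions~\ref{abc7}--\ref{abc99}), not a routine check. Second, and independently, the kernel half: to conclude $G=G^\dagger$ you must show that every element of $H_0$ whose associated similitude is an \emph{isometry} already lies in $H^\dagger$. No amount of information about multipliers gives this. In the paper this is a separate argument: Proposition~\ref{abc98} realizes products of even numbers of reflections inside $H^\dagger$, \cite[Prop.~3.16]{weiss-crelle} reduces to the identity similitude, and \cite[Thm.~3.12, Props.~3.11 and 3.13]{weiss-crelle} handle the elements acting trivially on the root group $U_4$. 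Your proposal is silent on this entire kernel analysis.

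It is also worth noting that the paper's alternative proof (\S\ref{abc96}), which is the closest rigorous realization of your idea of ``translating'' the Whitehead group into the invariant $G(q)/\Hyp(q)$, does not proceed through such an isomorphism either: it uses Merkurjev's theorem \cite[Thm.~1]{merkurjev1}, which requires the equality $G(q_E)=\Hyp(q_E)$ for \emph{every} field extension $E/K$ (not just $E=K$, which is all your argument invokes) to deduce $R$-triviality of $\PGO_+(q)$ (Corollary~\ref{pdq2}), and then combines this with Gille's reductions \cite[7.2, 7.3]{gille} and a birational decomposition of $G$ into $\PGO_+(q)$ times an affine space. So in both of the paper's proofs, the passage from the multiplier statement to the group-theoretic statement is a genuinely hard step requiring either building-theoretic or birational-geometric input; your proposal treats it as a formality, and that is where it fails.
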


We give two very different proofs of these theorems. In \S\ref{abc97}
we lay the groundwork that is common to the two proofs, and show that
the equality $G(q)=\Hyp(q)$ holds for quadratic forms as in
Theorem~\ref{abc2}. As a consequence, the connected component of the
identity $\PGO_+(q)$ in the group of projective similitudes is
$R$-trivial if $\charac(K)\neq2$: see Corollary~\ref{pdq2}. In
\S\ref{abc4}, we give proofs of Theorems~\ref{abc2} and~\ref{abc3}
based on results in \cite{weiss-quad} and \cite{weiss-crelle}. In
particular, the notion of a {\it quadrangular algebra} introduced in
Chapters~12--13 of \cite{TW} and in \cite{weiss-quad} plays a central
role in these proofs. In \S\ref{abc96} we show how the $R$-triviality
of $\PGO_+(q)$ for $q$ as in Theorem~\ref{abc2}(ii) in
characteristic~$0$ yields another proof of Theorem~\ref{abc3} in
arbitrary characteristic.  In \S\ref{sec:trial} we give an entirely
different proof of Theorem~\ref{abc2} under the assumption that 
$\charac(K)\ne2$, using the triality-defined 
correspondence between $8$-dimensional quadratic forms of trivial
discriminant and hermitian forms over the simple components of their
even Clifford algebra.

For a survey of what is known about the Kneser-Tits conjecture; see
\cite{gille}. We call attention especially to \S6 of that paper, where
the Kneser-Tits conjecture over an arbitrary field is discussed.  By
\cite{prasad}, \cite[6.1]{gille} and Theorem~\ref{abc3}, the only
exceptional groups of relative rank at least~2 for which the
Kneser-Tits conjecture remains to be verified over arbitrary fields
are those whose Tits index is
$$\centerline{\epsfbox{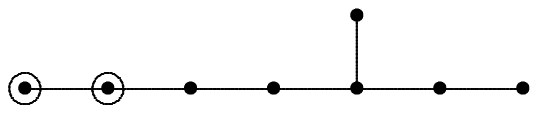}}$$ 
(called $E_{8,2}^{78}$ in \cite{tits-algebraic}). Groups with this
Tits index are classified by isotopy classes of Albert division
algebras, and the corresponding spherical buildings are the Moufang
hexagons defined in \cite[16.8]{TW} for ``hexagonal systems'' of
dimension~$27$. See also \cite[8.6]{gille} and \cite[37.41]{TW}.

\medskip
\noindent {\sc Acknowledgement.}
The proof in \S\ref{abc96} that the $R$-triviality in
characteristic~$0$ of $\PGO_+(q)$ for $q$ as in Theorem~\ref{abc2}(ii)
implies Theorem~\ref{abc3} in arbitrary characteristic is due to Skip
Garibaldi.  We would like to thank him for allowing us to reproduce
his proof here.

\medskip
\section{Similitudes of quadratic forms}\label{abc97}

\noindent 
Our main background reference for quadratic forms is \cite{elman},
although we mostly use the notation of \cite{weiss-quad}.  Let
$(K,L,q)$ be a quadratic space. Thus $K$ is a field, $L$ is a
$K$-vector space and $q\colon L\to K$ is a quadratic form on $L$. We
let $f=\partial q$ denote the polar bilinear form of $q$. Thus
\[
f(x,y)=q(x+y)-q(x)-q(y)\qquad\text{for $x$, $y\in L$.}
\]
The quadratic space $(K,L,q)$ is \emph{nondegenerate} if $\dim_K{\rm
  rad}\, f\le1$; see \cite[7.17]{elman}. If $\dim_KL$ is even and
$(K,L,q)$ is nondegenerate, then $f$ is nondegenerate, the
discriminant $\disc(q)$ is the isomorphism class of the center of the
even Clifford algebra $C_0(q)$ and the Clifford invariant $\clif(q)$
is the Brauer class of the full Clifford algebra $C(q)$; see
\cite[\S\S13, 14]{elman}. As in \cite[\S\S8, 9]{elman}, we let $I_qK$
denote the quadratic Witt group of $K$ and let $I^n_qK=I^{n-1}K\cdot
I_qK$ for all $n>0$, where $I^{n-1}K$ is the $(n-1)$st power of the
fundamental ideal $IK$ of even-dimensional forms in the bilinear Witt
ring $WK$.

The following definitions are taken from \cite[21.31]{TW}.

\begin{definition}\label{abc5}
  A quadratic space $(K,L,q)$ is {\it of type $E_7$} if it is
  anisotropic and there exists a separable quadratic extension $E/K$
  with norm $N$ and scalars $\alpha_1,\ldots,\alpha_4$ such that
$$(K,L,q)\cong(K,E^4,\alpha_1N\perp\alpha_2N\perp\alpha_3N\perp\alpha_4N)$$
and
$$\alpha_1\alpha_2\alpha_3\alpha_4\not\in N(E).$$
In other words, $q$ is anisotropic,
\[
q\cong\qform{\alpha_1,\alpha_2,\alpha_3,\alpha_4}\cdot N,
\]
and the quaternion algebra $(E/K,\alpha_1\alpha_2\alpha_3\alpha_4)$,
which represents $\clif(q)$, is not split.
\end{definition}

\begin{definition}\label{abc5a}
  A quadratic space $(K,L,q)$ is {\it of type $E_8$} if it is
  anisotropic and there exists a separable quadratic extension $E/K$
  with norm $N$ and scalars $\alpha_1,\ldots,\alpha_6$ such that
$$(K,L,q)\cong(K,E^6,\alpha_1N\perp\alpha_2N\perp\cdots\perp\alpha_6N)$$
and
$$-\alpha_1\alpha_2\cdots\alpha_6\in N(E).$$
In other words, $q$ is anisotropic,
\[
q\cong\qform{\alpha_1,\ldots,\alpha_6}\cdot N,
\]
and $\clif(q)$ is split.
\end{definition}

\begin{proposition}\label{abc6}
  Suppose that $(K,L,q)$ is an anisotropic quadratic space. Then the
  following hold:
  \begin{thmenum}
  \item $(K,L,q)$ is of type $E_7$ if and only if $\dim q=8$, $\disc(
    q)$ is trivial and ${\rm clif}(q)$ is of index~2.
  \item $(K,L,q)$ is of type $E_8$ if and only if $\dim q=12$, $\disc(
    q)$ is trivial and ${\rm clif}(q)$ is split. These conditions are
    also equivalent to $\dim q=12$ and $q\in I^3_qK$.
  \end{thmenum}
\end{proposition}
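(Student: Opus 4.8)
The plan is to treat each part as two implications. The forward implication (a form of type $E_7$ or $E_8$ has the asserted dimension, discriminant and Clifford invariant) is a direct invariant computation, essentially already recorded in the ``in other words'' clauses of Definitions~\ref{abc5} and~\ref{abc5a}. The converse (the numerical conditions force the norm-form decomposition) is the substantive part, and I would reduce it to a single divisibility statement.

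First I would settle the forward implications. Writing $N$ for the norm form of the separable quadratic extension $E/K$, so that $N=\pform{d}=\qform{1,-d}$ when $\charac(K)\ne2$ and $E=K(\sqrt d)$ (with the evident binary separable norm form $[1,a]$ replacing it in characteristic~$2$), I would compute the invariants of $q\cong\qform{\alpha_1,\dots,\alpha_n}\cdot N$ for $n=4$ and $n=6$. The dimension is $2n$. For the discriminant: since the signed discriminant is a homomorphism on even-dimensional forms---the Arf invariant in characteristic~$2$---and each $\alpha_iN$ has the same discriminant $d$ as $N$, the discriminant of $q$ is $d^n$, hence trivial for $n$ even. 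For the Clifford invariant I would use the Witt-ring identity $\alpha_iN=N-\pform{\alpha_i,d}$, so that $q=nN-\sum_i\pform{\alpha_i,d}$, together with the additivity of $\clif$ on $I^2_qK$; this yields $\clif(q)=\tfrac n2\,(-1,d)+(\alpha_1\cdots\alpha_n,d)$. For $n=4$ this is the quaternion class $(d,\alpha_1\alpha_2\alpha_3\alpha_4)$, i.e.\ $(E/K,\alpha_1\alpha_2\alpha_3\alpha_4)$, which is non-split of index~$2$; for $n=6$ it is $(-\alpha_1\cdots\alpha_6,d)$, split precisely under the $E_8$ condition $-\alpha_1\cdots\alpha_6\in N(E)$. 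The last equivalence in~(ii), between ``$\dim q=12$, $\disc(q)$ trivial, $\clif(q)$ split'' and ``$\dim q=12$ and $q\in I^3_qK$,'' then follows from the facts that an even-dimensional form lies in $I^2_qK$ iff its discriminant is trivial, and that $\clif$ is injective on $I^2_qK/I^3_qK$ (Merkurjev, with its characteristic-$2$ analogue).

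For the converse I would reduce both parts to a \emph{divisibility} statement: if there is a separable quadratic extension $E$ with norm form $N$ over which $q$ becomes hyperbolic, then, because the kernel of the restriction map on Witt groups is the ideal generated by $N$ (see \cite{elman}), the anisotropic form $q$ is isometric to $N\otimes\rho$ with $\dim\rho=\tfrac12\dim q$. Diagonalizing $\rho=\qform{\alpha_1,\dots,\alpha_n}$ exhibits $q$ in the required shape, and the Clifford-invariant hypothesis translates, via the forward computation above, into exactly the non-norm condition (for $E_7$) or the norm condition (for $E_8$) on $\alpha_1\cdots\alpha_n$. Everything therefore comes down to producing such an $E$, with $q_E$ hyperbolic and $E/K$ separable.

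This last point is where I expect the main difficulty to lie. For part~(i) I would take $E$ to be a separable quadratic subfield of the quaternion algebra $Q=\clif(q)$, so that $E$ splits $Q$ and hence $q_E\in I^3_qE$; since $\dim q_E=8$, the Arason--Pfister Hauptsatz forces $q_E$ to be either hyperbolic or similar to an anisotropic $3$-fold Pfister form, and the real work is to exclude the latter---for which I anticipate needing a Scharlau-transfer argument back to $K$, or an appeal to the structure theory of $8$-dimensional forms of trivial discriminant. For part~(ii) the corresponding input is the structure theorem that every anisotropic $12$-dimensional form in $I^3_qK$ is divisible by the norm form of a \emph{separable} quadratic extension; this is the deepest ingredient, and I would either cite it or derive it from the classification of low-dimensional forms in $I^3$. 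Throughout, the characteristic-$2$ case requires replacing $\pform{d}$ by the separable binary norm form, $\disc$ by the Arf invariant, and the cohomological symbols by their characteristic-$2$ counterparts, with all quadratic-form inputs taken in the arbitrary-characteristic form provided by \cite{elman}.
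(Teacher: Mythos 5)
Your forward implications and the reduction of the second equivalence in (ii) to the injectivity of $\clif$ on $I^2_qK/I^3_qK$ are correct, and framing the converse as a divisibility statement (an anisotropic $q$ with $q_E$ hyperbolic is isometric to $N\otimes\rho$, \cite[Prop.~34.8]{elman}) is the right reduction. But the converse is where all the content lies, and your proposal does not close it. For part~(i) the plan as stated would actually fail, not merely remain incomplete: if $E$ is an \emph{arbitrary} separable quadratic subfield of $Q$, the case ``$q_E$ similar to an anisotropic $3$-fold Pfister form'' genuinely occurs, so it cannot be ``excluded'' by any argument. Indeed, by Lemma~\ref{lem:dim8} of the paper, $q=\qform{\alpha}\cdot N_Q+\qform{\beta}\cdot\pi$ in $I_qK$, so for any quadratic $E$ splitting $Q$ one gets $q_E\cong\qform{\beta}\cdot\pi_E$ (Witt-equivalent forms of the same dimension), which is hyperbolic if and only if $E$ splits $\pi$. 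Thus the extension you need must split $Q$ \emph{and} $\pi$ simultaneously, and producing it---in effect a linkage statement between $Q$ and $\pi$---is precisely the substance of the result. Concretely, for the generic form $q=\qform{1,-d}\cdot\qform{a_1,a_2,a_3,a_4}$ over $K=k(d,a_1,\dots,a_4)$, the field $E=K(\sqrt{a_1a_2a_3a_4})$ embeds in $Q=(d,a_1a_2a_3a_4)$, yet $q_E$ is anisotropic: a Cassels--Pfister degree argument shows $d\notin G\bigl(\qform{a_1,\dots,a_4}_E\bigr)$, so $q_E$ is not hyperbolic.

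For part~(ii) you defer the ``deepest ingredient'' (every anisotropic $12$-dimensional form in $I^3_qK$ is divisible by the norm form of a separable quadratic extension) to a citation or an unspecified derivation; but that ingredient \emph{is} statement~(ii) of Proposition~\ref{abc6}, so as a proof this is circular unless the citation is supplied. For comparison, the paper itself does not reprove these facts: its proof of Proposition~\ref{abc6} consists entirely of references, namely \cite[Ex.~9.12]{knebusch} for (i) and \cite[p.~123]{pfister} for (ii) in characteristic different from~$2$, \cite[4.12]{demedts} in arbitrary characteristic, and \cite[Thm.~16.3]{elman} for the $I^3_q$ equivalence in characteristic~$2$. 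So the honest versions of your two options are either to cite these same sources (acceptable, and then your invariant computations are largely redundant) or to actually prove the linkage and divisibility statements, which your proposal does not do. As written, part~(i) rests on a step that is false for the extension you chose, and part~(ii) rests on citing the statement being proved.
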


\begin{proof}
  If $\charac(K)\neq2$, (i) is in \cite[Ex.~9.12]{knebusch} and (ii)
  in \cite[p.~123]{pfister}. In arbitrary characteristic, see
  \cite[4.12]{demedts} and (for the second part of (ii))
  \cite[Thm.~16.3]{elman} if $\charac(K)=2$.
\end{proof}

\begin{remark}\label{abc88}
  Suppose that $(K,L,q)$ is a quadratic space of type $E_7$ and that
  $q(1)=1$ for a distinguished element $1$ of $L$. Let $E$ and
  $\alpha_1,\ldots,\alpha_4$ be as in \ref{abc5}. By
  \cite[2.24]{weiss-quad},
  \begin{equation}\label{abc88a}
    C(q,1)\cong M(4,D)\oplus M(4,D),
  \end{equation}
  where $C(q,1)$ is the Clifford algebra with base point as defined in
  \cite[12.47]{TW} and $D$ is the quaternion division algebra
  $(E/K,\alpha_1\alpha_2\alpha_3\alpha_4)$. By \cite[12.51]{TW},
  $C(q,1)$ is isomorphic to the even Clifford algebra $C_0(q)$.  Since
  $D$ represents ${\rm clif}(q)$, it is independent of the choice of
  the orthogonal decomposition of $q$ in \ref{abc5}.
\end{remark}

Our goal in this section is to prove the equality $G(q)=\Hyp(q)$ for
$q$ of type $E_7$ or $E_8$. We start with some general observations.
The following is essentially \cite[2.18]{weiss-quad}.

\begin{proposition}\label{abc14a}
  Suppose that the polynomial $p(x)=x^2-\alpha x+\beta\in K[x]$ is
  separable and irreducible over $K$. Let $E$ be the splitting field
  of $p(x)$ over $K$ and let $N$ denote the norm of the extension
  $E/K$, so that $(K,E,N)$ is a nondegenerate anisotropic
  $2$-dimensional quadratic space. Let $(K,L,q)$ be a
  finite-dimensional quadratic space. Then the following assertions
  are equivalent:
  \begin{thmenum}
  \item The $K$-vector space structure on $L$ extends to an $E$-vector
    space structure such that $q(u\cdot v)=N(u) q(v)$ for all $u\in
    E$, $v\in L$;
  \item There exists a similitude $T$ of $q$ such that $q(T(v))=\beta
    q(v)$ and $f(v,T(v))=\alpha$ for all non-zero $v\in L$ and
    $p(T)=0$;
  \item For each $v_1\in L$ there exists a decomposition
    $L=V_1\oplus\cdots\oplus V_d$ for some $d\in\mathbb{N}$ such that
    $v_1\in V_1$, the restriction $q_i$ of $q$ to $V_i$ is similar to
    $N$ for each $i\in[1,d]$ and $q=q_1\perp\cdots\perp q_d$;
  \item for some $d\in\mathbb{N}$ and some
    $\alpha_1,\alpha_2,\ldots,\alpha_d\in K^\times$,
$$(K,L,q)\cong(K,E^d,\alpha_1N\perp\alpha_2N\perp\cdots\perp\alpha_dN);$$
\item $q_E$ is hyperbolic.
\end{thmenum}
\end{proposition}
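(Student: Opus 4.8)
The plan is to dispatch the cheap equivalences directly and concentrate the real work on the single nontrivial implication (v)~$\Rightarrow$~(i). I would first prove (i)~$\Leftrightarrow$~(ii) by identifying the extended scalar action of a root of $p$ with a similitude. Fix a root $\theta\in E$, so that $N(\theta)=\beta$ and $\theta+\bar\theta=\alpha$. Given (i), set $T(v)=\theta\cdot v$; then $q(Tv)=N(\theta)q(v)=\beta q(v)$, the relation $T^2-\alpha T+\beta=0$ holds because $\theta$ is a root of $p$, and comparing $q((1+\theta)v)=N(1+\theta)q(v)=(1+\alpha+\beta)q(v)$ with $q(v)+q(Tv)+f(v,Tv)$ yields $f(v,Tv)=\alpha q(v)$; this is (ii). Conversely, given $T$ as in (ii), the relation $p(T)=0$ makes $\theta\mapsto T$ a well-defined $K$-algebra map $E=K[x]/(p)\to\End_K L$, hence an $E$-vector-space structure on $L$; expanding $q((a+b\theta)\cdot v)=a^2q(v)+ab\,f(v,Tv)+b^2q(Tv)$ and matching $N(a+b\theta)=a^2+\alpha ab+\beta b^2$ gives the norm compatibility in (i).

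Next I would run the chain (i)~$\Rightarrow$~(iii)~$\Rightarrow$~(iv)~$\Rightarrow$~(v). For (i)~$\Rightarrow$~(iii), I regard $L$ as an $E$-space on which $T$ is a similitude with multiplier $\beta$; polarizing $f(v,Tv)=\alpha q(v)$ gives $f(Tx,y)+f(x,Ty)=\alpha f(x,y)$, and together with $f(Tx,Ty)=\beta f(x,y)$ this shows that the $q$-orthogonal complement of any $E$-line is again $E$-stable. An $E$-linear Gram--Schmidt process started at $Ev_1$ then decomposes $L$ orthogonally into $E$-lines $Ev_i$, on each of which $q$ restricts to $q(v_i)\,N$, which is similar to $N$; this is (iii), and is where I use that $q$ is anisotropic, so that each $q(v_i)\neq0$ (for isotropic $q$ condition (iii) can fail for an isotropic $v_1$, so the equivalence involving (iii) is really meant in the anisotropic setting of the applications). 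The implication (iii)~$\Rightarrow$~(iv) is immediate with $\alpha_i=q(v_i)$. For (iv)~$\Rightarrow$~(v), note that $E\otimes_K E\cong E\times E$ since $E/K$ is separable, so $N$ becomes isotropic, hence hyperbolic, over $E$; thus $q_E\cong\alpha_1N_E\perp\cdots\perp\alpha_dN_E$ is a sum of hyperbolic planes.

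The substantive step is (v)~$\Rightarrow$~(i), and this is where I expect the main obstacle. The assertion is that a form becoming hyperbolic over the separable quadratic extension $E/K$ must be induced from the norm form, i.e.\ $q\cong\qform{\alpha_1,\ldots,\alpha_d}\cdot N$, which is exactly an $E$-structure as in (i). When $\charac(K)\neq2$ this is the classical determination of the kernel of the restriction $W(K)\to W(E)$: writing $E=K(\sqrt a)$, this kernel is the principal ideal generated by $\pform{a}$, and $\pform{a}$ represents $N$, so $q$ is a multiple of $N$. In arbitrary characteristic I would instead invoke the analogous computation of the kernel of $I_qK\to I_qE$ on quadratic Witt groups for a separable quadratic extension (via the Scharlau transfer), which again shows the kernel is generated as a $W(K)$-module by $N$.

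The delicate point is purely that one must pass to the quadratic-form version of the Witt-kernel theorem to cover characteristic~$2$, where $N$ is a separable norm form rather than a diagonal Pfister form. A self-contained alternative that sidesteps this is an induction on $\dim q$: peel off a single copy $\alpha_1N$ with $\alpha_1=q(v_1)$ spanned by a prescribed anisotropic $v_1$ and a partner $v_2$ with $q(v_2)=\beta\,q(v_1)$ and $f(v_1,v_2)=\alpha\,q(v_1)$---the existence of such a $v_2$ is precisely what hyperbolicity of $q_E$ supplies---and then observe that the orthogonal complement $V_1^\perp$ again satisfies $(V_1^\perp)_E$ hyperbolic (its Witt class over $E$ equals $q_E\perp(-\alpha_1N_E)=0$), so the induction closes and reproduces (iii) and hence (i).
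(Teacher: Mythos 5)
Your main line is correct and is essentially the paper's own proof: the equivalences (i)--(iv) are handled by the same elementary devices (a root of $p$ acting as the similitude $T$, orthogonal peeling of $E$-lines, transport of the $E$-structure from $E^d$), and the only substantive input---the equivalence with (v)---is in both cases delegated to the classical quadratic-extension hyperbolicity theorem (the paper cites \cite[Prop.~34.8]{elman} for (iv)$\Leftrightarrow$(v); you cite the same classical fact in Witt-kernel form, with the quadratic Witt group $I_qK\to I_qE$ version covering characteristic~$2$). Two of your asides are well taken: the condition in (ii) must indeed read $f(v,T(v))=\alpha\,q(v)$ rather than $f(v,T(v))=\alpha$ (scaling $v$ shows the literal statement is inconsistent), and anisotropy really is needed---not only for (iii), as you note, but for the statement as a whole: for $q$ a single hyperbolic plane, $q_E$ is hyperbolic, yet $q$ is not isometric to any $\alpha_1N\perp\cdots\perp\alpha_dN$ and admits no $E$-structure, so (v) holds while (i)--(iv) all fail. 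The paper's own proof of (ii)$\Rightarrow$(iii) likewise uses $q(v)\neq0$ silently, so the proposition should be read with $q$ anisotropic, as in all of its applications.

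The one claim you should not let stand as written is in your ``self-contained alternative'' for (v)$\Rightarrow$(i): that hyperbolicity of $q_E$ supplies a partner $v_2$ for a \emph{prescribed} anisotropic $v_1$. What an isotropic vector $x+y\theta\in L_E$ actually supplies, on separating the $1$- and $\theta$-components of $q_E(x+y\theta)=0$, is a pair with $q(x)=\beta q(y)$ and $f(y,x)=-\alpha q(y)$ in which $y$ is \emph{not} at your disposal. Producing such a partner for a prescribed $v_1$ amounts to knowing that $v_1$ lies in a $2$-dimensional subform isometric to $q(v_1)\cdot N$, which is essentially the theorem being proved; I see no one-line derivation of it from hyperbolicity alone. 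The repair is cheap and restores exactly the paper's order of argument: run the peeling induction with unprescribed vectors to get (iv), transport the structure of $E^d$ to get (i) (equivalently (ii)), and only then obtain the pointed statement (iii) by your $E$-linear Gram--Schmidt process starting at $Ev_1$.
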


\begin{proof}
  Suppose that (i) holds, choose a root $\gamma\in E$ of $p(x)$ and
  let $T(v)=\gamma\cdot v$ for all $v\in L$. Then $T$ is a similitude
  of $q$ as in (ii). If $T$ is a similitude of $q$ as in (ii), then
  for each nonzero $v\in L$ the restriction of $q$ to $\langle
  v,T(v)\rangle$ is similar to $N$ (and, in particular, is
  nondegenerate). Therefore (iii) holds, and (iii) of course implies
  (iv).  Fixing an isomorphism as in (iv), we may transfer to $L$ the
  natural $E$-vector space structure on $E^d$ to obtain~(i).  The
  equivalence of (iv) and (v) follows readily from
  \cite[Prop.~34.8]{elman}.
\end{proof}

\begin{definition}\label{abc70}
  A similitude $\varphi$ of a quadratic space $(K,L,q)$ is called
  \emph{inseparable} if $\charac(K)=2$, the multiplier of $\varphi$ is
  not in $K^{\times2}$ and
$$f(v,\varphi(v))=0\qquad\text{for all $v\in L$},$$ 
where $f=\partial q$. We call a similitude of $q$ {\it separable} if
it is not inseparable. Thus, if $\charac(K)\neq2$ all similitudes are
separable.
\end{definition}

\begin{proposition}\label{abc14p}
  Let $(K,L,q)$ be a finite-dimensional quadratic space such that
  $f=\partial q$ is nondegenerate. If $(K,L,q)$ admits an inseparable
  similitude with multiplier $\gamma$, then
  \[
  q\simeq\qform{1,\gamma}\cdot q_0
  \]
  for some non-degenerate quadratic form $q_0$. In particular, $\dim
  L\equiv0\bmod 4$ and $q_{K(\sqrt{\gamma})}$ is hyperbolic.
\end{proposition}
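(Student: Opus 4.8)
The plan is to use the defining relations of the inseparable similitude $\varphi$ to make $L$ into a vector space over the inseparable quadratic extension $E=K(\sqrt\gamma)$, and then to split off a copy of $\qform{1,\gamma}$ by a symplectic-basis argument over $E$. Since an inseparable similitude exists only when $\charac(K)=2$ (Definition~\ref{abc70}), the polar form $f=\partial q$ is alternating. The relation $q\circ\varphi=\gamma q$ gives $f(\varphi x,\varphi y)=\gamma f(x,y)$, while polarizing the defining identity $f(v,\varphi v)=0$ shows that $\varphi$ is self-adjoint for $f$, that is, $f(\varphi x,y)=f(x,\varphi y)$. Combining these, $f(x,\varphi^2 y)=f(\varphi x,\varphi y)=\gamma f(x,y)$ for all $x,y$, and the nondegeneracy of $f$ forces $\varphi^2=\gamma\cdot\mathrm{id}$. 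As $\gamma\notin K^{\times2}$, the polynomial $x^2-\gamma$ is irreducible, so $K[\varphi]\cong E:=K(\sqrt\gamma)$ and $L$ becomes an $E$-vector space with $\sqrt\gamma$ acting as $\varphi$. A direct computation with $u=a+b\sqrt\gamma$ then yields $q(uv)=(a^2+\gamma b^2)q(v)=u^2q(v)$, using $q(\varphi v)=\gamma q(v)$ and $f(v,\varphi v)=0$.

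Next I would package the remaining data into a single form over $E$. Set $g(x,y)=f(x,\varphi y)$; self-adjointness makes $g$ symmetric, and $f(v,\varphi v)=0$ makes it alternating. A short verification shows that the $E$-valued form $b(x,y):=g(x,y)+f(x,y)\sqrt\gamma$ is $E$-bilinear, alternating and nondegenerate, so $(L,b)$ is a symplectic space over $E$. In particular $\dim_E L$ is even, whence $\dim_K L\equiv0\bmod4$. Choosing a symplectic $E$-basis $e_1,\dots,e_m,e_1',\dots,e_m'$ (so $b(e_i,e_j')=\delta_{ij}$ and the other pairings vanish), I would let $L_0$ be the $K$-span of $e_1,\dots,e_m,\sqrt\gamma e_1',\dots,\sqrt\gamma e_m'$. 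Every value of $b$ on a pair of these generators lies in $K\sqrt\gamma$, so $g$, being the $K$-component of $b$, vanishes on $L_0$; and a dimension count gives $L=L_0\oplus\varphi L_0$.

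To conclude, $g|_{L_0}=0$ says precisely that $f(x,\varphi y)=0$ for $x,y\in L_0$, so the $K$-isomorphism $L_0\oplus L_0\to L$, $(x,y)\mapsto x+\varphi y$, is an orthogonal decomposition: $q(x+\varphi y)=q(x)+\gamma q(y)$. Hence $q\simeq q_0\perp\gamma q_0=\qform{1,\gamma}\cdot q_0$ with $q_0:=q|_{L_0}$, which is nondegenerate because $q$ is. For the final assertion, over $K(\sqrt\gamma)$ the scalar $\gamma$ becomes a square, so $\gamma q_0\cong q_0$ and $q_{K(\sqrt\gamma)}\cong\qform{1,1}\cdot q_0$; since $\qform{1,1}$ is a metabolic bilinear form in characteristic~$2$, this product is hyperbolic.

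I expect the main obstacle to be the construction of $L_0$: a $K$-subspace that is simultaneously totally isotropic for $g$ and complementary to its own image under $\varphi$. The naive attempts fail, since any $E$-subspace is $\varphi$-stable and hence never transverse to $\varphi L_0$, while the $K$-span of a symplectic $E$-basis is not $g$-isotropic. Introducing the $E$-bilinear alternating form $b$ is exactly what removes this difficulty, reducing the problem to the existence of a symplectic basis over $E$ together with the explicit, deliberately asymmetric choice of $K$-span above.
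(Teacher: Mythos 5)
Your proof is correct and follows essentially the same route as the paper's: linearizing $f(v,\varphi(v))=0$ to get self-adjointness, deducing $\varphi^2=\gamma\cdot\mathrm{id}$ from nondegeneracy, endowing $L$ with a $K(\sqrt{\gamma})$-structure, and splitting off $q=q_0\perp\qform{\gamma}q_0$ via a symplectic basis of an $E$-valued alternating form. The only difference is a normalization: your form $b$ equals $\sqrt{\gamma}$ times the paper's $f'$ (whose $K$-component is $f$ rather than $g$), which is why you must skew the basis to $(e_i,\sqrt{\gamma}\,e'_i)$ before taking the $K$-span, whereas the paper takes the $K$-span of its symplectic basis directly.
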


\begin{proof}
  Let $E=K(\sqrt{\gamma})$ be a purely inseparable quadratic extension
  of $K$, and let $\varphi$ be an inseparable similitude of $(K,L,q)$
  with multiplier $\gamma$. Linearizing the condition
  $f\bigl(v,\varphi(v)\bigr)=0$, we obtain
  \[
  f\bigl(v,\varphi(w)\bigr) = f(\varphi(v),w) \qquad\text{for all $v$,
    $w\in L$.}
  \]
  Since $f\bigl(\varphi(v),\varphi(w)\bigr)=\gamma f(v,w)$ for all
  $v$, $w\in L$, it follows that
  \[
  f\bigl(v,\varphi^2(w)\bigr)= f\bigl(\varphi(v),\varphi(w)\bigr) =
  \gamma f(v,w)\qquad\text{for all $v$, $w\in L$,}
  \]
  hence $\varphi^2(w)=\gamma w$ for all $w\in L$. We then define on
  $L$ an $E$-vector space structure by
  \[
  (\lambda+\mu\sqrt{\gamma})\cdot v = \lambda v +\mu\varphi(v)
  \qquad\text{for $\lambda$, $\mu\in K$ and $v\in L$,}
  \]
  and we define a map $f'\colon L\times L\to E$ by
  \[
  f'(v,w)=f(v,w)+\sqrt{\gamma^{-1}}f\bigl(v,\varphi(w)\bigr)
  \qquad\text{for $v$, $w\in L$.}
  \]
  A straightforward computation shows that $f'$ is a bilinear
  alternating form on $L$. It is nondegenerate since $f$ is
  nondegenerate. Therefore, the dimension of $L$ over $E$ is even,
  hence its dimension over $K$ is a multiple of~$4$. Let
  $(e_i,e'_i)_{i=1}^d$ be a symplectic $E$-base of $L$ for $f'$. If
  $L_0\subset L$ is the $K$-span of $(e_i,e'_i)_{i=1}^d$ and $q_0$ is
  the restriction of $q$ to $L_0$, we have $L=L_0\perp\varphi(L_0)$
  and $q=q_0\perp\qform{\gamma}q_0$.
\end{proof}

\begin{corollary}\label{cor:insepsim}
  Let $(K,L,q)$ be a finite-dimensional quadratic space such that
  $\partial q$ is non-degenerate. Then the multiplier of every
  inseparable similitude of $q$ is in $\Hyp_2(q)$.

\end{corollary}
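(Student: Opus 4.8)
The plan is to deduce the result almost immediately from Proposition~\ref{abc14p}. Let $\varphi$ be an inseparable similitude of $q$ with multiplier $\gamma$. By Definition~\ref{abc70} we have $\charac(K)=2$ and $\gamma\notin K^{\times2}$, so $E=K(\sqrt{\gamma})$ is a purely inseparable quadratic extension of $K$. Proposition~\ref{abc14p} then tells us that $q_E$ is hyperbolic, so $E/K$ is exactly one of the quadratic extensions entering the definition of $\Hyp_2(q)$. It therefore remains only to recognize $\gamma$ itself as a norm from $E$.

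The key observation is that $\gamma$ \emph{is} such a norm. Indeed, writing $N$ for the norm of $E/K$ and computing the determinant of the multiplication map by $\sqrt{\gamma}$ on the $K$-basis $\{1,\sqrt{\gamma}\}$ of $E$, one finds that multiplication by $\sqrt\gamma$ sends $1\mapsto\sqrt\gamma$ and $\sqrt\gamma\mapsto\gamma$, so its determinant is $-\gamma=\gamma$; hence $N(\sqrt{\gamma})=\gamma$. Since $q_E$ is hyperbolic, $\gamma=N(\sqrt{\gamma})$ lies in the subgroup of $K^\times$ generated by $K^{\times2}$ together with the norms from quadratic extensions over which $q$ becomes hyperbolic, which is to say $\gamma\in\Hyp_2(q)$, as required.

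I do not expect a serious obstacle here: the substance of the argument is entirely contained in Proposition~\ref{abc14p}, which delivers the hyperbolicity of $q_E$. The only point that requires attention is that the relevant extension $E=K(\sqrt\gamma)$ is inseparable, so one must invoke the fact that $\Hyp_2(q)$ is defined to include norms from inseparable quadratic extensions as well. That inclusion in the definition is precisely what lets the argument close.
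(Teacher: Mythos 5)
Your proposal is correct and is essentially identical to the paper's own proof: both invoke Proposition~\ref{abc14p} to get that $q$ becomes hyperbolic over $E=K(\sqrt{\gamma})$, and both observe that $\gamma=N_{E/K}(\sqrt{\gamma})$ is a norm from this (inseparable) quadratic extension, which is admissible since $\Hyp_2(q)$ explicitly includes norms from inseparable quadratic extensions. Your explicit determinant computation of $N(\sqrt{\gamma})$ simply spells out what the paper dismisses with ``clearly.''
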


\begin{proof}
  Let $\gamma$ be the multiplier of an inseparable similitude of $q$.
  Clearly, $\gamma\in N\bigl(K(\sqrt{\gamma})\bigr)$, and
  Proposition~\ref{abc14p} shows that $q$ is hyperbolic over
  $K(\sqrt{\gamma})$.
\end{proof}

We now consider quadratic forms of low dimension. The following result
is presumably well-known:

\begin{proposition}
  \label{prop:isotlowdim}
  Every $10$-dimensional quadratic form in $I^3_q(K)$ is isotropic.
\end{proposition}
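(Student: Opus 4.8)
The plan is to show that there is no anisotropic $10$-dimensional form in $I^3_qK$; the isotropy of every $10$-dimensional form in $I^3_qK$ then follows at once, since the anisotropic kernel of such a form again lies in $I^3_qK$. So I would assume for contradiction that $q$ is anisotropic with $\dim q=10$ and $q\in I^3_qK$. The Arason--Pfister Hauptsatz \cite{elman} gives only $\dim q\ge2^3=8$, so it does not by itself exclude the value~$10$; the whole point is to use the dimension arithmetically, and this is what separates the value $10$ from the genuinely admissible values $8$, $12$ and $14$.

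The conceptual core is a linkage computation. An anisotropic form in $I^3_qK$ of the minimal dimension $8$ is a general $3$-fold Pfister form, i.e. a scalar multiple of an anisotropic $3$-fold quadratic Pfister form (in characteristic~$2$, a bilinear Pfister multiple of a quadratic Pfister form); this is the usual description of minimal forms in $I^n$ coming from the Hauptsatz \cite{elman}. If $\pi_1$ and $\pi_2$ are $3$-fold Pfister forms whose largest common Pfister divisor is $\ell$-fold, with $\ell\in\{0,1,2,3\}$, then Witt cancellation of the shared subform yields
\[
\dim\bigl(\pi_1\perp(-\pi_2)\bigr)_{\mathrm{an}}=2^4-2^{\ell+1}\in\{0,8,12,14\},
\]
and the same list of possible anisotropic dimensions survives after scaling either form by an element of $K^\times$. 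Since $10\notin\{0,8,12,14\}$, it is enough to show that the hypothetical $q$ is Witt-equivalent to a difference of two general $3$-fold Pfister forms.

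The main obstacle is exactly this reduction, which is where the inequality $\dim q<2^4=16$ must be exploited. I would carry it out in the quotient $I^3_qK/I^4_qK$: the class of $q$ is a sum of classes of $3$-fold Pfister forms, and the decisive—and hardest—step is that in the range $\dim q<16$ this class can be represented with symbol length at most~$2$. Concretely, one chooses $3$-fold Pfister forms $\pi_1,\pi_2$ and scalars so that $\psi:=q\perp(-\pi_1)\perp\pi_2$ (suitably scaled) lies in $I^4_qK$; since $\dim\psi\le10+8+8=26$, the Hauptsatz forces $\dim\psi_{\mathrm{an}}=0$ or $\dim\psi_{\mathrm{an}}\ge16$, and arranging the $\pi_i$ to share large subforms with $q$ makes the Witt index of $\psi$ big enough to rule out $\dim\psi_{\mathrm{an}}\ge16$. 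Then $\psi$ is hyperbolic, $q$ is Witt-equivalent to $\pi_1\perp(-\pi_2)$, and the displayed computation gives $\dim q\in\{8,12,14\}$, contradicting $\dim q=10$. All the ingredients hold in arbitrary characteristic through the uniform treatment of \cite{elman}; when $\charac(K)\ne2$ one may instead quote Pfister's classification of the low-dimensional forms in $I^3$ directly \cite{pfister}.
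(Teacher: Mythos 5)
Your reduction to the anisotropic case is fine, and your Step B (the linkage count) is essentially sound: for $3$-fold Pfister forms $\pi_1,\pi_2$ the Elman--Lam linkage theorem gives $\dim\bigl(\pi_1\perp(-\pi_2)\bigr)_{\mathrm{an}}=2^4-2^{\ell+1}\in\{0,8,12,14\}$, and after scaling the only new possible value is $16$ (if $a\pi_1\perp(-b\pi_2)$ is isotropic, then $a\pi_1$ and $b\pi_2$ represent a common value $d$, so by roundness of Pfister forms the form is isometric to $d\bigl(\pi_1\perp(-\pi_2)\bigr)$), which is harmless since $10\notin\{0,8,12,14,16\}$. The genuine gap is exactly the step you yourself flag as ``decisive and hardest'': you never prove that a hypothetical anisotropic $10$-dimensional $q\in I^3_qK$ is Witt-equivalent to $a\pi_1\perp(-b\pi_2)$, and the sketch you give cannot produce this. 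Your $\psi=q\perp(-\pi_1)\perp\pi_2$ must satisfy two requirements at once: $\psi\in I^4_qK$, so that the Hauptsatz applies to it, and Witt index at least $6$, so that $\dim\psi_{\mathrm{an}}<16$. These are of entirely different natures. Choosing $\pi_1,\pi_2$ ``to share large subforms with $q$'' bears on the Witt index but gives no control whatsoever on the class of $\psi$ modulo $I^4_qK$; conversely, writing the class of $q$ in $I^3_qK/I^4_qK$ as a sum of symbols (possible, but a priori with many symbols) carries no dimension information. Bridging the two---that is, proving your symbol-length-at-most-$2$ claim for forms of dimension $<16$---is not something one can wave at: that statement is normally \emph{deduced} from the classification of $8$-, $12$- and $14$-dimensional anisotropic forms in $I^3_qK$, of which the present proposition (non-existence in dimension $10$) is an integral part, so invoking it would be circular.

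To see concretely how constrained the shared-subform route is: a general $3$-fold Pfister form $a\pi_1$ can never share a $6$-dimensional subform with your anisotropic $q$, for then $q\perp(-a\pi_1)$ would lie in $I^3_qK$ with anisotropic dimension at most $6$, forcing $q\sim a\pi_1$ by the Hauptsatz, which is impossible as $\dim q=10>8$. The threshold is a shared $5$-dimensional subform: if one exists, then $q\perp(-a\pi_1)\in I^3_qK$ has anisotropic dimension at most $8$, hence by the Hauptsatz and its equality case \cite[Cor.~23.4]{elman} it equals some $b\pi_2$ in the Witt group, and your linkage count finishes the proof (note the detour through $\psi$ and $I^4_qK$ is then superfluous). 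But the existence of such a $\pi_1$ amounts to finding a $5$-dimensional subform $\tau\subset q$ and a $3$-dimensional form $\phi$ with $\tau\perp\phi$ similar to a $3$-fold Pfister form, which is an index-$\leq2$ condition on a Clifford-type invariant of $\tau$; it fails for an arbitrary $\tau$, and establishing it for a suitable $\tau$ is precisely where Pfister's actual proof does its work (via a quadratic extension splitting a binary subform of $q$, a case analysis over that extension, and a transfer argument). Your proposal leaves this entirely open. Note finally that the paper does not reprove the statement at all: its proof is a citation of Pfister \cite[p.~123]{pfister} in characteristic $\neq2$ and of De Medts \cite[Thm.~4.10]{demedts} in characteristic $2$---the very fallback your last sentence mentions.
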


\begin{proof}
  This was proved by Pfister \cite[p.~123]{pfister} under the
  hypothesis that $\charac(K)\neq2$. The arguments also apply when
  $\charac(K)=2$; see \cite[Thm.~4.10]{demedts}.
\end{proof}

We now consider quadratic spaces of type~$E_7$. For the next
statement, we do not require the form to be anisotropic.

\begin{lemma}
  \label{lem:dim8}
  Let $(K,L,q)$ be a nondegenerate quadratic space of
  dimension~$8$. If $\disc(q)$ is trivial and $\clif(q)$ is
  represented by a quaternion algebra $Q$ with norm form $N_Q$, then
  $q$ is Witt-equivalent to the sum of a multiple of $N_Q$ and a
  multiple of some $3$-fold Pfister quadratic form $\pi$: there exist
  $\alpha$, $\beta\in K^\times$ such that
  \begin{equation}
    \label{eq:dim8}
    q=\qform{\alpha}\cdot N_Q + \qform{\beta}\cdot \pi \qquad\text{in
      $I_qK$.} 
  \end{equation}
  Moreover, $G(q)=G(N_Q)\cap G(\pi)$.
\end{lemma}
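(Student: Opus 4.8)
The plan is to establish the Witt-equivalence \eqref{eq:dim8} first, and then to read the equality of multiplier groups off from it. The engine throughout is the fundamental filtration $I_qK\supseteq I^2_qK\supseteq I^3_qK\supseteq\cdots$ together with three standard inputs, valid in every characteristic: the fact that a form in $I^2_qK$ belongs to $I^3_qK$ precisely when its Clifford invariant is trivial \cite{elman}; the multiplicativity of Pfister forms under the $WK$-action; and the Arason--Pfister Hauptsatz, that a nonzero anisotropic form in $I^n_qK$ has dimension at least $2^n$.

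To prove \eqref{eq:dim8}, I first note that $\dim q$ is even and $\disc(q)$ is trivial, so $q\in I^2_qK$, and likewise $\qform{\alpha}\cdot N_Q\in I^2_qK$ for every $\alpha\in K^\times$. Moreover $\qform{\alpha}\cdot N_Q-N_Q=-\pform{\alpha}\cdot N_Q$ is a scalar multiple of a $3$-fold Pfister form, hence lies in $I^3_qK$; thus $\qform{\alpha}\cdot N_Q$ and $N_Q$ have the same Clifford invariant $[Q]=\clif(q)$. Since a form in $I^2_qK$ belongs to $I^3_qK$ exactly when its Clifford invariant is trivial, it follows that $q-\qform{\alpha}\cdot N_Q\in I^3_qK$. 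Now I choose $\alpha$ to be a nonzero value represented by $q$ (the case $q=0$ being trivial); as $N_Q$ represents $1$, the forms $q$ and $\qform{\alpha}\cdot N_Q$ represent a common value, so $q-\qform{\alpha}\cdot N_Q$ has an isotropic $12$-dimensional representative and its anisotropic kernel $\psi$ has dimension at most $10$. Since $\psi\in I^3_qK$, the Hauptsatz forces $\dim\psi\in\{0,8,10\}$, and Proposition~\ref{prop:isotlowdim} rules out $\dim\psi=10$. If $\dim\psi=0$ I take $\pi$ to be a hyperbolic $3$-fold Pfister form; if $\dim\psi=8$ then, being an $8$-dimensional form in $I^3_qK$, $\psi$ is similar to a $3$-fold Pfister form $\pi$, say $\psi=\qform{\beta}\cdot\pi$. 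Either way \eqref{eq:dim8} holds.

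For the multiplier groups, the inclusion $G(N_Q)\cap G(\pi)\subseteq G(q)$ is immediate: if $\qform{\lambda}\cdot N_Q\cong N_Q$ and $\qform{\lambda}\cdot\pi\cong\pi$, then multiplying \eqref{eq:dim8} by $\qform{\lambda}$ leaves the right-hand side unchanged in $I_qK$, so $\qform{\lambda}\cdot q$ and $q$ are Witt-equivalent; having equal dimension, they are isometric and $\lambda\in G(q)$. For the reverse inclusion, let $\lambda\in G(q)$, so that $\pform{\lambda}\cdot q=q-\qform{\lambda}\cdot q=0$ in $I_qK$. Multiplying \eqref{eq:dim8} by $\pform{\lambda}$ gives
\[
\qform{\alpha}\cdot\bigl(\pform{\lambda}\cdot N_Q\bigr)+\qform{\beta}\cdot\bigl(\pform{\lambda}\cdot\pi\bigr)=0\qquad\text{in $I_qK$.}
\]
Here $\pform{\lambda}\cdot N_Q$ is a $3$-fold Pfister form and $\pform{\lambda}\cdot\pi$ a $4$-fold Pfister form; in particular the second summand lies in $I^4_qK$, whence $\qform{\alpha}\cdot(\pform{\lambda}\cdot N_Q)\in I^4_qK$ as well. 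But this is a scalar multiple of a $3$-fold Pfister form, of dimension $8<2^4$, so by the Hauptsatz its anisotropic kernel is trivial, i.e.\ it is hyperbolic; hence $\pform{\lambda}\cdot N_Q=0$ and $\lambda\in G(N_Q)$. Feeding this back leaves $\qform{\beta}\cdot(\pform{\lambda}\cdot\pi)=0$, so $\pform{\lambda}\cdot\pi=0$ and $\lambda\in G(\pi)$. This proves $G(q)=G(N_Q)\cap G(\pi)$.

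The main obstacle is not the logical skeleton but the verification that every tool above holds uniformly in characteristic $2$: the identification of $I^3_qK$ inside $I^2_qK$ by the Clifford invariant, the fact that $\pform{\lambda}\cdot N_Q$ and $\pform{\lambda}\cdot\pi$ remain (quadratic) Pfister forms, the Arason--Pfister Hauptsatz, and above all the classification of anisotropic $8$-dimensional forms in $I^3_qK$ as scalar multiples of $3$-fold Pfister forms. These are available in \cite{elman}, but keeping the bilinear and quadratic factors straight, and handling the degenerate cases where $N_Q$ or $\pi$ is hyperbolic, is where the care is needed.
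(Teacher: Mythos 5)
Your proof is correct and follows essentially the same route as the paper's: you build $\pi$ from the isotropic $12$-dimensional form $q\perp\qform{-\alpha}\cdot N_Q$ (with $\alpha$ a value of $q$) via Proposition~\ref{prop:isotlowdim} and the Arason--Pfister Hauptsatz, and you obtain $G(q)=G(N_Q)\cap G(\pi)$ by multiplying \eqref{eq:dim8} by $\pform{\lambda}$. The only cosmetic difference is in the last step, where you place the $8$-dimensional summand in $I^4_qK$ and invoke the Hauptsatz, whereas the paper notes that the $16$-dimensional side, being Witt-equivalent to an $8$-dimensional form and a multiple of a Pfister form, is isotropic and hence hyperbolic.
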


\begin{proof}
  Let $\alpha\in K^\times$ be a value represented by $q$. Consider the
  form
  \[
  q'=q\perp\qform{-\alpha}\cdot N_Q.
  \]
  This $12$-dimensional form is isotropic and has trivial discriminant
  and Clifford invariant, hence it is in $I^3_qK$ and is
  Witt-equivalent to a $10$-dimensional form. By
  Proposition~\ref{prop:isotlowdim}, it is actually equivalent to an
  $8$-dimensional form. By the Arason--Pfister Hauptsatz
  \cite[Thm.~23.7]{elman}, this $8$-dimensional quadratic form becomes
  hyperbolic over the function field of the corresponding quadric,
  hence it is a multiple of some $3$-fold Pfister quadratic form $\pi$
  by \cite[Cor.~23.4]{elman}. Letting $q'=\qform{\beta}\cdot\pi$ in
  $I_qK$, we have \eqref{eq:dim8}.

  Now, for $\gamma\in G(q)$ we have $\qform{1,-\gamma}\cdot q=0$ in
  $I_qK$, hence
  \[
  \qform{1,-\gamma}\cdot\qform{\alpha}\cdot N_Q =
  -\qform{1,-\gamma}\cdot\qform{\beta}\cdot\pi\qquad\text{in $I_qK$}.
  \]
  Since the left side is a form of dimension $8$ and the right side is
  a form of dimension~$16$, the right side must be isotropic. It is
  then hyperbolic by \cite[Cor.~9.10]{elman}, since it is a multiple
  of a Pfister form. The left side is then also hyperbolic, which
  means that $\gamma$ is in $G(\pi)$ and in $G(N_Q)$. We have thus
  proved $G(q)\subset G(N_Q)\cap G(\pi)$. Since the reverse inclusion
  is clear, the proof is complete.
\end{proof}

Lemmas~\ref{lem:biquad} and~\ref{lem:pfister} are well-known when
$\charac(K)\neq2$; see \cite[2.13]{elmanlam} for
Lemma~\ref{lem:biquad}. The proofs we give below
do not require any separability hypothesis.

\begin{lemma}
  \label{lem:biquad}
  Let $E_1$, $E_2$ be linearly disjoint quadratic extensions of a
  field $K$, and let $M=E_1\otimes_KE_2$. The norm groups of $E_1$,
  $E_2$, and $M$ are related as follows:
  \[
  N(E_1/K)\cap N(E_2/K) = K^{\times2}\cdot N(M/K).
  \]
\end{lemma}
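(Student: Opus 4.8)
The plan is to prove the two inclusions separately, the inclusion $\supseteq$ being routine and the inclusion $\subseteq$ resting on a single explicit norm computation valid in every characteristic.

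For $\supseteq$: note first that for $k\in K^\times$ the element $k$, viewed in $E_i$, satisfies $N_{E_i/K}(k)=k^2$, so $K^{\times2}\subseteq N(E_i/K)$ for $i=1,2$. Next, transitivity of the norm in the tower $K\subseteq E_i\subseteq M$ gives $N_{M/K}=N_{E_i/K}\circ N_{M/E_i}$, whence $N(M/K)\subseteq N(E_i/K)$. Since the right-hand intersection is a group containing both $K^{\times2}$ and $N(M/K)$, this yields $K^{\times2}\cdot N(M/K)\subseteq N(E_1/K)\cap N(E_2/K)$.

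For the reverse inclusion, fix $a\in N(E_1/K)\cap N(E_2/K)$ and write $a=N_{E_1/K}(z_1)=N_{E_2/K}(z_2)$ with $z_i\in E_i^\times$. Since $E_1$, $E_2$ are linearly disjoint, $M=E_1\otimes_KE_2$ is a field containing both as subfields, with $E_1\cap E_2=K$. I would then consider the single element $v=z_1+z_2\in M^\times$ and claim the identity
\[
N_{M/K}(z_1+z_2)=\bigl(\operatorname{Tr}_{E_1/K}(z_1)+\operatorname{Tr}_{E_2/K}(z_2)\bigr)^2\,a.
\]
Granting this, the inclusion follows: if $z_1\in K$ or $z_2\in K$ then $a$ is already a square in $K$; otherwise $z_1,z_2\notin K$, the right-hand side is a square times $a$, and $a\in K^{\times2}\cdot N(M/K)$ provided the trace factor is nonzero. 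But this is automatic, since a vanishing norm in the field $M$ would force $z_1=-z_2\in E_1\cap E_2=K$, contrary to assumption.

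The heart of the matter is the displayed identity, which I would establish through the tower rather than through Galois conjugates, precisely so that the argument survives in characteristic~$2$ and for inseparable extensions. Assuming $z_2\notin K$ (the case $z_2\in K$ having been dispatched above), the element $z_2$ has minimal polynomial $g(X)=X^2-\operatorname{Tr}_{E_2/K}(z_2)\,X+a$ over $K$, hence also over $E_1$, so that $M=E_1[z_2]$ and $N_{M/E_1}(z_1+z_2)=g(-z_1)=z_1^2+\operatorname{Tr}_{E_2/K}(z_2)\,z_1+a$. Substituting the Cayley--Hamilton relation $z_1^2=\operatorname{Tr}_{E_1/K}(z_1)\,z_1-a$ collapses this to $\bigl(\operatorname{Tr}_{E_1/K}(z_1)+\operatorname{Tr}_{E_2/K}(z_2)\bigr)z_1$, and applying $N_{E_1/K}$ gives the identity. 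The one point to verify with care is the formula $N_{F[\theta]/F}(c+\theta)=g(-c)$ for a quadratic element $\theta$ with minimal polynomial $g$; this holds uniformly whether $M/E_1$ is separable or purely inseparable. That uniformity is the main obstacle, and it is exactly why I route the whole computation through minimal polynomials instead of through the Galois description of $N_{M/K}$ as a product of conjugates, which is unavailable when $M/K$ fails to be Galois.
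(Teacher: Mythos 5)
Your proof is correct and is essentially the paper's own argument: your displayed identity $N_{M/K}(z_1+z_2)=\bigl(\operatorname{Tr}_{E_1/K}(z_1)+\operatorname{Tr}_{E_2/K}(z_2)\bigr)^2 a$ is exactly the paper's identity $x_1N_{M/E_1}(1+x_1^{-1}x_2) = T_{E_1/K}(x_1)+T_{E_2/K}(x_2)$ multiplied through by $z_1$ and pushed down by $N_{E_1/K}$, and the degenerate case is handled in both proofs by the same observation that $z_1=-z_2$ forces $z_1\in E_1\cap E_2=K$. The only difference is that you spell out, via minimal polynomials and norm transitivity, the verification the paper compresses into ``Computation shows,'' which is a welcome (and characteristic-free) elaboration rather than a new route.
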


\begin{proof}
  Since $K^{\times2}\subset N(E_i/K)$ and $N(M/K)\subset N(E_i/K)$ for
  $i=1$, $2$, the inclusion $N(E_1/K)\cap N(E_2/K) \supset
  K^{\times2}\cdot N(M/K)$ is clear, and it suffices to prove the
  reverse inclusion. We identify $E_1$ and $E_2$ with subfields of $M$
  and consider $\alpha\in N(E_1/K)\cap N(E_2/K)$. Let $x_1\in
  E_1^\times$, $x_2\in E_2^\times$ be such that
  \[
  \alpha=N_{E_1/K}(x_1) = N_{E_2/K}(x_2).
  \]
  Let $T_{E_i/K}\colon E_i\to K$ be the trace map, for $i=1$,
  $2$. Computation shows that
  \[
  x_1N_{M/E_1}(1+x_1^{-1}x_2) = T_{E_1/K}(x_1) + T_{E_2/K}(x_2).
  \]
  If $x_1\neq-x_2$, the left side is nonzero. Taking the norm from
  $E_1$ to $K$ of each side yields
  \[
  \alpha N_{M/K}(1+x_1^{-1}x_2) = \bigl(T_{E_1/K}(x_1) +
  T_{E_2/K}(x_2)\bigr)^2\in K^{\times2},
  \]
  hence $\alpha\in K^{\times2}\cdot N(M/K)$.  If $x_1=-x_2$, then
  $x_1\in E_1\cap E_2=K$, hence $\alpha\in K^{\times2}$.
\end{proof}

\begin{lemma}
  \label{lem:pfister}
  Any multiplier of similitude of an anisotropic quadratic Pfister
  space $(K,L,\pi)$ is a square in $K$ or is the norm of a quadratic
  extension over which $\pi$ is hyperbolic.
\end{lemma}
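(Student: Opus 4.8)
The plan is to exploit the \emph{roundness} of Pfister forms together with a well-chosen binary subform that I can identify with the norm form of a quadratic extension. Write $\pi$ for the given anisotropic quadratic Pfister form on $L$ and let $\gamma$ be the multiplier of a similitude of $\pi$, so that $\gamma\in G(\pi)$. Since $\pi$ is a Pfister form it is round: the group $G(\pi)$ of similitude multipliers coincides with the group $D(\pi)$ of nonzero values represented by $\pi$ (see \cite[\S9]{elman}). Hence there is a vector $v\in L$ with $\pi(v)=\gamma$. If $\gamma\in K^{\times2}$ we are in the first alternative and there is nothing more to prove, so I would assume from now on that $\gamma$ is not a square.

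Because $\pi$ is a Pfister form it represents $1$, so I fix $e_0\in L$ with $\pi(e_0)=1$. If $v$ were a scalar multiple $ce_0$ of $e_0$, then $\gamma=c^2\pi(e_0)=c^2$ would be a square; thus $e_0$ and $v$ are linearly independent and the restriction $b$ of $\pi$ to $Ke_0\oplus Kv$ is a genuine binary form. As a subform of the anisotropic form $\pi$ it is anisotropic, and it represents both $1$ (via $e_0$) and $\gamma$ (via $v$). The key observation is that an anisotropic binary quadratic form representing $1$ is the norm form $N_{E/K}$ of a quadratic field extension $E/K$: if the polar form $\partial b$ is nonzero, then $b$ is nondegenerate and $E/K$ is separable; if $\partial b=0$, which can happen only when $\charac(K)=2$, then $b\cong\qform{1,\gamma}$ is totally singular and $E=K(\sqrt{\gamma})$ is inseparable. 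In either case the nonzero values of $b$ are exactly $N(E/K)$, and since $b$ represents $\gamma$ I conclude $\gamma\in N(E/K)$.

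It remains to check that $\pi_E$ is hyperbolic. By construction $b\cong N_{E/K}$, so $b_E$ is isotropic (in the inseparable case $\pi(e_0+\gamma^{-1/2}v)=1+\gamma^{-1}\gamma=0$ over $E$). Hence $\pi_E$, which contains $b_E$ as a subform, is isotropic, and an isotropic quadratic Pfister form is hyperbolic in every characteristic by \cite[Cor.~9.10]{elman}. Thus $E/K$ is a quadratic extension over which $\pi$ becomes hyperbolic and of which $\gamma$ is a norm, which is the second alternative. The step requiring the most care is the characteristic-$2$ analysis of the binary subform $b$: one must separate the separable case $\partial b\neq0$, giving an \'etale $E$ with norm form $x^2+xy+cy^2$, from the inseparable case $\partial b=0$, giving $E=K(\sqrt{\gamma})$ and norm form $x^2+\gamma y^2$, verify in each case that the values of $b$ form precisely $N(E/K)$, and invoke the fact that a nondegenerate quadratic Pfister form is hyperbolic as soon as it is isotropic, valid even when $\charac(K)=2$.
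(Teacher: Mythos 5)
Your proposal is correct and follows essentially the same route as the paper's own proof: use roundness of Pfister forms (\cite[Cor.~9.9]{elman}) to write $\gamma=\pi(v)$, restrict $\pi$ to the plane spanned by $v$ and a vector representing $1$ to obtain the norm form of a quadratic extension $E/K$, and conclude via the fact that an isotropic Pfister form is hyperbolic (\cite[Cor.~9.10]{elman}). The only difference is that you spell out the characteristic-$2$ dichotomy (nondegenerate separable binary subform versus totally singular $\qform{1,\gamma}$ giving an inseparable $E$), which the paper leaves implicit; this is a worthwhile clarification but not a different argument.
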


\begin{proof}
  By \cite[Cor.~9.9]{elman}, the multipliers of $\pi$ are the
  represented values of $\pi$, so any $\gamma\in G(\pi)$ has the form
  $\gamma=\pi(v)$ for some $v\in L$. Let $e\in L$ be such that
  $\pi(e)=1$. If $e$ and $v$ are not linearly independent, then
  $\gamma\in K^{\times2}$. Otherwise, let $V$ be the $K$-span of $e$
  and $v$. The restriction of $\pi$ to $V$ is the norm form of a
  quadratic extension of $K$ over which $\pi$ is isotropic, hence
  hyperbolic by \cite[Cor.~9.10]{elman}. By construction, this norm
  form represents $\gamma$.
\end{proof}

\begin{proposition}
  \label{prop:E7G=H}
  For any nondegenerate quadratic space $(K,L,q)$ of dimension~$8$
  such that $\disc(q)$ is trivial and $\clif(q)$ has index~$1$ or $2$,
  we have $G(q)=\Hyp(q)$.
\end{proposition}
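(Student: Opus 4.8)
The plan is to prove the two inclusions separately, the inclusion $\Hyp(q)\subseteq G(q)$ being the routine one. Indeed $K^{\times2}\subseteq G(q)$, since each square $\lambda^2$ is the multiplier of the similitude $\lambda\cdot\mathrm{id}$, and if $E/K$ is a finite extension with $q_E$ hyperbolic then $G(q_E)=E^\times$, so $N_{E/K}(E^\times)\subseteq G(q)$ by Scharlau's norm principle (see \cite{elman}). Hence $\Hyp(q)\subseteq G(q)$, and the whole content lies in the reverse inclusion $G(q)\subseteq\Hyp(q)$.

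For the reverse inclusion I would exploit the structure of $8$-dimensional forms. Let $Q$ be a quaternion algebra representing $\clif(q)$ (the split one when the index is $1$, the division one when it is $2$), and write $N_Q$ for its norm form, a $2$-fold Pfister form that is hyperbolic exactly when the index is $1$ and anisotropic exactly when it is $2$. By Lemma~\ref{lem:dim8} there are a $3$-fold Pfister form $\pi$ and scalars $\alpha$, $\beta\in K^\times$ with $q=\qform{\alpha}\cdot N_Q+\qform{\beta}\cdot\pi$ in $I_qK$ and, crucially, $G(q)=G(N_Q)\cap G(\pi)$. The key observation is that base-changing this Witt-class identity to any extension $F$ shows $q_F$ to be hyperbolic as soon as both $(N_Q)_F$ and $\pi_F$ are hyperbolic.

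Now take $\gamma\in G(q)$; since $K^{\times2}\subseteq\Hyp(q)$ I may assume $\gamma\notin K^{\times2}$. As $\gamma\in G(N_Q)$ and $\gamma\in G(\pi)$, I would apply Lemma~\ref{lem:pfister} to each anisotropic factor: when $N_Q$ is anisotropic it yields a quadratic extension $E_1/K$ with $(N_Q)_{E_1}$ hyperbolic and $\gamma\in N_{E_1/K}(E_1^\times)$ (the square alternative being excluded by $\gamma\notin K^{\times2}$), and when $\pi$ is anisotropic it yields $E_2/K$ with $\pi_{E_2}$ hyperbolic and $\gamma\in N_{E_2/K}(E_2^\times)$. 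If one of the two factors is already hyperbolic over $K$ it stays hyperbolic over every extension, so only the other extension is needed; if both are hyperbolic then $q$ itself is hyperbolic and $\Hyp(q)=K^\times\ni\gamma$. In the decisive case both factors are anisotropic, and if $E_1=E_2$ then $q$ becomes hyperbolic over this single quadratic extension, from which $\gamma$ is a norm, so $\gamma\in\Hyp(q)$.

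The main obstacle is the case $E_1\neq E_2$, where $\gamma$ is a priori a norm from two different quadratic extensions but $q$ need not become hyperbolic over either one alone. Here I would use that two distinct quadratic extensions are linearly disjoint (their tensor product is a field, in every characteristic and also for inseparable extensions), so that $M=E_1\otimes_KE_2$ is a field of degree~$4$ over which both $N_Q$ and $\pi$, hence $q$, become hyperbolic. Since $\gamma\in N(E_1/K)\cap N(E_2/K)$, Lemma~\ref{lem:biquad} gives $\gamma\in K^{\times2}\cdot N(M/K)$, and as $q_M$ is hyperbolic we have $N(M/K)\subseteq\Hyp(q)$, whence $\gamma\in\Hyp(q)$. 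This exhausts all cases and completes the reverse inclusion. Note that the compositum $M$ is genuinely a degree-$4$ extension, which is precisely why the present statement concerns $\Hyp(q)$ rather than $\Hyp_2(q)$.
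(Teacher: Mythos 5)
Your proposal is correct and follows essentially the same route as the paper's proof: the similarity norm principle for $\Hyp(q)\subseteq G(q)$, then Lemma~\ref{lem:dim8} to get $\gamma\in G(N_Q)\cap G(\pi)$, Lemma~\ref{lem:pfister} to produce the quadratic extensions $E_1$, $E_2$, and Lemma~\ref{lem:biquad} applied to the biquadratic field $M=E_1\otimes_KE_2$ in the case $E_1\not\cong E_2$, with the same treatment of the degenerate cases where $N_Q$ or $\pi$ is hyperbolic.
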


\begin{proof}
  It suffices to show $G(q)\subset\Hyp(q)$, since the reverse
  inclusion follows from the similarity norm principle
  \cite[Thm.~20.14]{elman}. Let $\gamma\in G(q)$, and consider a
  decomposition of $q$ as in \eqref{eq:dim8}. We may assume $q$ is not
  hyperbolic, otherwise $\Hyp(q)=K^\times=G(q)$ and there is nothing
  to prove. If $N_Q$ or $\pi$ is isotropic, hence hyperbolic, the
  proposition readily follows from Lemma~\ref{lem:pfister}. For the
  rest of the proof, we may thus assume $N_Q$ and $\pi$ are
  anisotropic.  By Lemma~\ref{lem:dim8} we have $\gamma\in G(N_Q)\cap
  G(\pi)$, hence Lemma~\ref{lem:pfister} yields quadratic extensions
  $E_1$, $E_2$ of $K$ that split $N_Q$ and $\pi$ respectively, such
  that $\gamma\in N(E_1/K)\cap N(E_2/K)$. If $E_1\cong E_2$, then
  $E_1$ splits $N_Q$ and $\pi$, hence also $q$. Since $\gamma\in
  N(E_1/K)$, it follows that $\gamma\in \Hyp(q)$. If $E_1\not\cong
  E_2$, then $E_1$ and $E_2$ are linearly disjoint over $K$, and the
  tensor product $M=E_1\otimes_KE_2$ is a field that splits $N_Q$ and
  $\pi$, hence also $q$. Since $\gamma$ is a norm from $E_1$ and from
  $E_2$, Lemma~\ref{lem:biquad} shows that $\gamma\in K^{\times2}\cdot
  N(M/K)$, hence $\gamma\in\Hyp(q)$.
\end{proof}

Proposition~\ref{prop:E7G=H} applies in particular to quadratic spaces
of type $E_7$. We now turn to spaces of type $E_8$.

\begin{proposition}\label{pdq1}
  Suppose that $(K,L,q)$ is of type $E_8$, that $\gamma$ is the
  multiplier of a separable similitude of $q$ and that $\gamma\not\in
  K^{\times2}$. Then there exists a decomposition
$$(K,L,q)=(K,L_1,q_1)\perp(K,L_2,q_2)$$
such that $q_2$ is of type $E_7$, $q_1$ is similar to the reduced norm
of the quaternion division algebra representing ${\rm clif}(q_2)$ and
$\gamma$ is a multiplier of similitudes of both $q_1$ and $q_2$.
\end{proposition}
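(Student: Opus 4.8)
The plan is to reduce the statement to the construction of a single $4$-dimensional block, and then to build that block from the similitude. I claim it suffices to find a nondegenerate $4$-dimensional subspace $L_1\subseteq L$ such that $q_1:=q|_{L_1}$ has trivial discriminant and $\gamma\in G(q_1)$. Granting this, put $L_2=L_1^\perp$ and $q_2=q|_{L_2}$. A nondegenerate $4$-dimensional form of trivial discriminant is similar to the norm form $N_Q$ of a quaternion algebra $Q$; as $q_1$ is a subform of the anisotropic $q$ it is anisotropic, so $Q$ is a division algebra and $\clif(q_1)=[Q]\neq0$. Then $\dim q_2=8$, $q_2$ is anisotropic, and $\disc(q_2)$ is trivial; moreover $q\in I^3_qK$ by Proposition~\ref{abc6}(ii) while $q_1\in I^2_qK$ (being a multiple of a $2$-fold Pfister form), so $q_2\equiv q-q_1$ lies in $I^2_qK$, and since the Clifford invariant is a homomorphism on $I^2_qK$ modulo $I^3_qK$ we get $\clif(q_2)=\clif(q_1)=[Q]$. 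Hence $\clif(q_2)$ has index~$2$ and $q_2$ is of type $E_7$ by Proposition~\ref{abc6}(i), with $q_1$ similar to the norm form of the quaternion algebra representing $\clif(q_2)$. Finally, $\gamma\in G(q)$ and $\gamma\in G(q_1)$ give $\qform{1,-\gamma}\cdot q=0$ and $\qform{1,-\gamma}\cdot q_1=0$ in $I_qK$, whence $\qform{1,-\gamma}\cdot q_2=0$; as $q_2$ and $\gamma q_2$ are anisotropic of equal dimension this forces $\gamma q_2\cong q_2$, i.e. $\gamma\in G(q_2)$. This is precisely the asserted decomposition.

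To build $L_1$ I would first extract a separable quadratic extension from $\varphi$. Fix a nonzero $v\in L$, choosing it with $f(v,\varphi v)\neq0$ when $\charac(K)=2$; such $v$ exists exactly because $\varphi$ is a \emph{separable} similitude with $\gamma\notin K^{\times2}$ (Definition~\ref{abc70}). Since $q(\varphi v)=\gamma q(v)$ and $\gamma\notin K^{\times2}$, the vector $\varphi v$ is not a $K$-multiple of $v$, so $W_1=\qform{v,\varphi v}$ is $2$-dimensional and $q|_{W_1}$ is similar to $x^2+\alpha xy+\gamma y^2$, where $\alpha=f(v,\varphi v)/q(v)$; this is the norm form $N_E$ of $E=K[t]/(t^2-\alpha t+\gamma)$. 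This binary form is anisotropic (it is a subform of $q$), the quadratic $t^2-\alpha t+\gamma$ is separable---automatically if $\charac(K)\neq2$, and because $\alpha\neq0$ if $\charac(K)=2$---and irreducible by anisotropy, so $E/K$ is a separable quadratic field extension. Crucially $\gamma$ is the product of the two roots of $t^2-\alpha t+\gamma$, so $\gamma\in N_{E/K}(E^\times)$.

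The crux is then to produce a \emph{second} block similar to $N_E$ orthogonal to $W_1$. Over $E$ the norm form $N_E$ is hyperbolic, so $q_E$ is isotropic. Because $q\in I^3_qK$ we have $q_E\in I^3_qE$, and an isotropic $12$-dimensional form in $I^3_qE$ cannot have anisotropic part of dimension~$10$, since a $10$-dimensional form in $I^3_q$ is isotropic by Proposition~\ref{prop:isotlowdim}; therefore the Witt index of $q_E$ is at least~$2$. Writing $q=q|_{W_1}\perp q'$ with $q'=q|_{W_1^\perp}$ of dimension~$10$, we get $q_E\cong\mathbb{H}\perp q'_E$, so $q'_E$ is isotropic, and by the subform criterion for isotropy over a separable quadratic extension $q'$ contains a $2$-dimensional subform $W_2$ with $q|_{W_2}\cong\qform{c'}\cdot N_E$. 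Setting $L_1=W_1\perp W_2$ gives $q_1=q|_{L_1}\cong\qform{c,c'}\cdot N_E$, a $4$-dimensional multiple of a quaternionic $2$-fold Pfister form, so $\disc(q_1)$ is trivial. Finally $q_1$ carries the $E$-vector space structure of $\qform{c,c'}\cdot N_E$, and since $\gamma\in N_{E/K}(E^\times)$, multiplication by an element of $E^\times$ of norm $\gamma$ is a similitude of $q_1$ with multiplier $\gamma$; thus $\gamma\in G(q_1)$ and the reduction of the first paragraph applies.

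The step I expect to be the main obstacle is exactly the passage from one copy of $N_E$ to two: this is where the hypothesis that $q$ is of type $E_8$ (equivalently $q\in I^3_qK$) enters in an essential way, through the fact that $10$-dimensional forms in $I^3_q$ are isotropic. The remaining delicate point is the characteristic-$2$ bookkeeping: one must invoke separability of $\varphi$ to secure a vector $v$ with $f(v,\varphi v)\neq0$, so that the extension $E$ produced is separable rather than the purely inseparable $K(\sqrt{\gamma})$ that an inseparable similitude would yield.
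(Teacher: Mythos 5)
Your proof is correct and follows essentially the same route as the paper's: you span $W_1$ by $v$ and $\varphi(v)$ (using separability of $\varphi$ to get $f(v,\varphi v)\neq0$ in characteristic~$2$), obtain a separable quadratic extension $E$ with $\gamma\in N(E/K)$, use Proposition~\ref{prop:isotlowdim} over $E$ to extract a second binary block similar to $N_E$ inside $W_1^\perp$, and finish with Witt cancellation and Clifford-invariant bookkeeping. The only differences are organizational (your up-front reduction to finding a $4$-dimensional block, arguing $\gamma q_2\cong q_2$ directly from uniqueness of the anisotropic part rather than citing \cite[8.17]{elman}, and computing $\clif(q_2)$ via additivity of $e_2$ on $I^2_q K$ modulo $I^3_qK$), none of which changes the substance of the argument.
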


\begin{proof}
  Let $\varphi$ be a separable similitude of $q$ with multiplier
  $\gamma$.  If ${\rm char}(K)=2$, we choose $v\in L$ such that
  $f(v,\varphi(v))\ne0$; if ${\rm char}(K)\ne2$, we let $v$ be an
  arbitrary non-zero vector in $L$.  Next we set $W=\langle
  v,\varphi(v)\rangle$.  Since $\gamma\not\in K^{\times2}$, we have
  $\dim_KW=2$.  Let $\hat q_1$ denote the restriction of $q$ to $W$
  and let $\hat q_2$ denote the restriction of $q$ to $W^\perp$.  The
  form $\hat q_1$ is similar to the norm $N$ of a quadratic extension
  $E/K$ such that $\gamma\in N(E)$.  Since $\hat q_1$ is
  nondegenerate, the extension $E/K$ is separable and $q=\hat
  q_1\perp\hat q_2$. Since $q_E$ and $(\hat q_1)_E$ have trivial
  discriminant and split Clifford invariant, also $(\hat q_2)_E$ has
  trivial discriminant and split Clifford invariant. By
  Proposition~\ref{prop:isotlowdim}, it follows that $(\hat q_2)_E$ is
  isotropic. Hence we can choose a 2-dimensional subspace $U$ of
  $W^\perp$ such that the restriction of $\hat q_2$ to $U$ is
  hyperbolic over $E$.  By Proposition~\ref{abc14a}, the restriction
  of $\hat q_2$ to $U$ is similar to $N$. Let $L_1=W\oplus U$, let
  $L_2=L_1^\perp$ and let $q_i$ denote the restriction of $q$ to $L_i$
  for $i=1$ and $2$. Then $q_1$ is similar to the reduced norm of a
  quaternion division algebra $D$, $\gamma$ is a multiplier of $q_1$
  and
$$(K,L,q)=(K,L_1,q_1)\perp(K,L_2,q_2).$$
Since $\gamma$ is a multiplier of both $q$ and $q_1$, both $\langle
1,-\gamma\rangle\cdot q$ and $\langle 1,-\gamma\rangle\cdot q_1$ are
hyperbolic. Since
$$\langle 1,-\gamma\rangle\cdot q=\langle 1,-\gamma\rangle\cdot 
q_1\perp\langle 1,-\gamma\rangle\cdot q_2,$$ it follows by Witt's
Cancellation Theorem that the product $\langle 1,-\gamma\rangle\cdot
q_2$ is also hyperbolic. Hence $q_2=\qform{\gamma}\cdot q_2$ in
$I_qK$. By \cite[8.17]{elman}, therefore, there is a similitude of
$q_2$ with multiplier $\gamma$.  Since $\disc(q)$ and $\disc(q_1)$ are
both trivial, so is $\disc(q_2)$.  Furthermore, ${\rm clif}(q_2)={\rm
  clif}(q_1)$ since ${\rm clif}(q)$ is split.  Since the quaternion
division algebra $D$ represents ${\rm clif}(q_1)$, it also represents
${\rm clif}(q_2)$. We conclude, in particular, that $q_2$ is of type
$E_7$.
\end{proof}

\begin{corollary}
  \label{cor:E8G=H}
  For any nondegenerate quadratic space $(K,L,q)$ of dimension~$12$
  such that $\disc(q)$ and $\clif(q)$ are trivial, we have
  $G(q)=\Hyp(q)$.
\end{corollary}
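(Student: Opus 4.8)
The plan is to prove the two inclusions separately, the substance being $G(q)\subseteq\Hyp(q)$. The reverse inclusion $\Hyp(q)\subseteq G(q)$ I would obtain exactly as in the proof of Proposition~\ref{prop:E7G=H}, from the similarity norm principle \cite[Thm.~20.14]{elman} together with the obvious inclusion $K^{\times2}\subseteq G(q)$.

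For the forward inclusion I would first dispose of the isotropic case. Since $\dim q=12$ and $q\in I^3_qK$, if $q$ is isotropic its anisotropic part has even dimension strictly less than~$12$ and still lies in $I^3_qK$; by the Arason--Pfister Hauptsatz and Proposition~\ref{prop:isotlowdim} this dimension is~$0$ or~$8$. In the first case $q$ is hyperbolic and $G(q)=K^\times=\Hyp(q)$, as in Proposition~\ref{prop:E7G=H}. In the second case $q$ is Witt-equivalent to an anisotropic $3$-fold Pfister form $\pi$; since adjoining hyperbolic planes does not affect the group of multipliers, Witt cancellation gives $G(q)=G(\pi)$, and as every field splitting $\pi$ also splits $q$, Lemma~\ref{lem:pfister} yields $G(\pi)\subseteq\Hyp(\pi)\subseteq\Hyp(q)$.

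So assume $q$ anisotropic, hence of type $E_8$ by Proposition~\ref{abc6}(ii), and fix $\gamma\in G(q)$. If $\gamma\in K^{\times2}$ there is nothing to prove. Otherwise choose a similitude $\varphi$ of $q$ with multiplier $\gamma$. If $\varphi$ is inseparable, then $\gamma\in\Hyp_2(q)\subseteq\Hyp(q)$ by Corollary~\ref{cor:insepsim}. If $\varphi$ is separable, I would invoke Proposition~\ref{pdq1} to write $q=q_1\perp q_2$ with $q_2$ of type $E_7$, with $q_1$ similar to the norm form $N_D$ of the quaternion division algebra $D$ representing $\clif(q_2)$, and with $\gamma\in G(q_1)\cap G(q_2)$. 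Applying Lemma~\ref{lem:dim8} to $q_2$ (taking $Q=D$) expresses $q_2=\qform{\alpha}\cdot N_D+\qform{\beta}\cdot\pi$ in $I_qK$ for some $3$-fold Pfister form $\pi$, and yields $\gamma\in G(q_2)=G(N_D)\cap G(\pi)$; here $N_D$ is anisotropic because $D$ is a division algebra, and $\pi$ is anisotropic because $q_2$ is an anisotropic form of dimension~$8$.

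It then remains to repeat the endgame of Proposition~\ref{prop:E7G=H}. Since $N_D$ and $\pi$ are anisotropic Pfister forms and $\gamma\notin K^{\times2}$, Lemma~\ref{lem:pfister} produces quadratic extensions $E_1$ and $E_2$ of $K$ splitting $N_D$ and $\pi$ respectively, with $\gamma\in N(E_1/K)\cap N(E_2/K)$. The decisive observation is that any field splitting both $N_D$ and $\pi$ splits $q_2$, and splits $q_1$ as well because $q_1$ is similar to $N_D$, and therefore splits the whole form $q$. Hence if $E_1\cong E_2=:E$ then $q_E$ is hyperbolic and $\gamma\in N(E/K)\subseteq\Hyp(q)$, while if $E_1\not\cong E_2$ then $M=E_1\otimes_KE_2$ is a field splitting $q$, and Lemma~\ref{lem:biquad} gives $\gamma\in K^{\times2}\cdot N(M/K)\subseteq\Hyp(q)$. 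I expect this last step to be the main obstacle: the real issue is not that $\gamma$ is a norm from some extension splitting the type-$E_7$ summand $q_2$, but that one must exhibit a single extension over which the full $12$-dimensional form $q$ becomes hyperbolic while $\gamma$ remains a norm from it. What makes this work is exactly the fact that the extension splitting the quaternionic factor $N_D$ also splits the summand $q_1$, upgrading a splitting of $q_2$ to a splitting of $q$.
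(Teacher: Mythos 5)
Your proof is correct and follows essentially the same path as the paper's: the same handling of the isotropic case via Proposition~\ref{prop:isotlowdim} and Lemma~\ref{lem:pfister}, the same use of Corollary~\ref{cor:insepsim} for inseparable similitudes, and the same decomposition $q=q_1\perp q_2$ from Proposition~\ref{pdq1}. The only difference is organizational: the paper cites Proposition~\ref{prop:E7G=H} as a black box to get $G(q_2)=\Hyp(q_2)$ and then shows $\Hyp(q_2)\subset\Hyp(q)$ by noting that any finite extension splitting $q_2$ splits $\clif(q_2)$ and hence $q_1$, whereas you unfold the proof of Proposition~\ref{prop:E7G=H} and observe that the quadratic extensions it produces split $N_D$ by construction, hence split $q_1$ and all of $q$ --- the same key insight (the link between $q_1$ and $\clif(q_2)$), implemented with explicit extensions rather than via the triviality of the Clifford invariant of a hyperbolic form.
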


\begin{proof}
  As in Proposition~\ref{prop:E7G=H}, it suffices to prove
  $G(q)\subset\Hyp(q)$. If $q$ is isotropic, then
  Proposition~\ref{prop:isotlowdim} shows that $q$ is Witt-equivalent
  to a multiple of a $3$-fold Pfister form, hence the inclusion
  follows from Lemma~\ref{lem:pfister}. For the rest of the proof, we
  may thus assume $q$ is anisotropic, i.e., $q$ is of type~$E_8$.

  Let $\gamma\in G(q)$. If $\gamma$ is the multiplier of an
  inseparable similitude, then we have $\gamma\in\Hyp(q)$ by
  Corollary~\ref{cor:insepsim}. If $\gamma$ is the multiplier of a
  separable similitude, we fix a decomposition $q=q_1\perp q_2$ as in
  Proposition~\ref{pdq1}, so $\gamma\in G(q_1)\cap G(q_2)$. By
  Proposition~\ref{prop:E7G=H} we have $G(q_2)=\Hyp(q_2)$. Now, if
  $E/K$ is a finite extension such that $(q_2)_E$ is hyperbolic, then
  $E$ splits $\clif(q_2)$. Hence $(q_1)_E$ is hyperbolic, and
  therefore $q_E$ is hyperbolic. This shows
  $\Hyp(q_2)\subset\Hyp(q)$. Since $\gamma\in \Hyp(q_2)$, it follows
  that $\gamma\in \Hyp(q)$.
\end{proof}

\begin{remark}
  \label{rem:E8G=H}
  Restricting to \emph{quadratic} extensions that split $q_2$ in the
  last part of the proof above, we see that
  $\Hyp_2(q_2)\subset\Hyp_2(q)$. This observation will be used in
  \S\ref{sec:trial}.
\end{remark}

When $\charac(K)\neq2$, Proposition~\ref{prop:E7G=H} and
Corollary~\ref{cor:E8G=H} yield information on the connected component
of the identity $\PGO_+(q)$ in the group of projective similitudes of
$(K,L,q)$, which is the group of algebra automorphisms of
$\operatorname{End}(L)$ that commute with the adjoint involution of
$q$. The property of $R$-triviality used in the following statement
refers to Manin's $R$-equivalence; see \cite[\S1]{merkurjev1} for
details.

\begin{corollary}
  \label{pdq2}
  Assume $\charac(K)\neq2$. For $q$ a quadratic form of type $E_7$ or
  $E_8$ over $K$, the group $\PGO_+(q)$ is $R$-trivial.
\end{corollary}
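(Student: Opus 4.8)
The plan is to deduce $R$-triviality from A.~Merkurjev's computation \cite{merkurjev1} of the group of $R$-equivalence classes of adjoint semisimple classical groups. First I would record the structural facts that put us in Merkurjev's setting: since $\PGO_+(q)=\mathbf{GO}^+(q)/\mathbb{G}_m$, Hilbert~90 applied to the sequence of $K$-points gives $\PGO_+(q)(K)=\mathbf{GO}^+(q)(K)/K^\times$; and since $\disc(q)$ is trivial (Proposition~\ref{abc6}), $\PGO_+(q)$ is of inner type $D_n$, with $n=4$ when $q$ is of type $E_7$ (so $\dim q=8$) and $n=6$ when $q$ is of type $E_8$ (so $\dim q=12$). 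This inner-type hypothesis is what makes the multiplier description of the $R$-equivalence classes clean.

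Next I would invoke Merkurjev's theorem, which for such a group identifies $\PGO_+(q)(K)/R$ with the quotient $G^+(q)/\Hyp(q)$, where $G^+(q)$ denotes the group of multipliers of \emph{proper} similitudes of $q$ and $\Hyp(q)\subseteq G^+(q)$ is the subgroup generated by $K^{\times2}$ together with the multipliers coming from extensions $E/K$ over which $q$ becomes hyperbolic — that is, the same subgroup $\Hyp(q)$ of $K^\times$ fixed in the introduction. By construction one has $\Hyp(q)\subseteq G^+(q)$, and trivially $G^+(q)\subseteq G(q)$.

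The conclusion is then purely formal. Proposition~\ref{prop:E7G=H} (for $q$ of type $E_7$) and Corollary~\ref{cor:E8G=H} (for $q$ of type $E_8$) yield $G(q)=\Hyp(q)$, so the chain $\Hyp(q)\subseteq G^+(q)\subseteq G(q)=\Hyp(q)$ collapses to $G^+(q)=\Hyp(q)$. Hence $\PGO_+(q)(K)/R=G^+(q)/\Hyp(q)=1$, which is exactly the asserted $R$-triviality; note that the sandwich also renders harmless any distinction between multipliers of proper and of arbitrary similitudes.

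The only real work lies in the bookkeeping needed to apply \cite{merkurjev1}: one must confirm that Merkurjev's hypotheses cover the (triality-sensitive) inner type-$D_4$ case as well as type-$D_6$, and — the point I expect to demand the most care — that the subgroup $\Hyp(q)$ appearing in his $R$-equivalence formula is literally the group defined in the introduction (generated by $K^{\times2}$ and norms from finite extensions splitting $q$ hyperbolically), rather than a possibly smaller group built from a more restricted family of extensions. Once the formula $\PGO_+(q)(K)/R\cong G^+(q)/\Hyp(q)$ is in hand with these identifications, the vanishing is nothing more than the equality $G(q)=\Hyp(q)$ already proved, so I anticipate no genuine obstruction beyond matching the two sets of conventions.
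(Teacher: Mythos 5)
Your route---Merkurjev's computation of $R$-equivalence classes plus the equality $G(q)=\Hyp(q)$---is the same as the paper's, but as written there is a genuine gap at the final step: you establish only that $\PGO_+(q)(K)/R$ is trivial and then declare this to be ``exactly the asserted $R$-triviality.'' It is not. $R$-triviality, in the sense of Manin's $R$-equivalence used here (see \cite[\S1]{merkurjev1}, which is what the paper cites for the definition), means that $\PGO_+(q)(E)/R=1$ for \emph{every} field extension $E/K$, not just for $E=K$. Accordingly, the reduction provided by \cite[Thm.~1]{merkurjev1} is: $\PGO_+(q)$ is $R$-trivial provided $G(q_E)=\Hyp(q_E)$ for all fields $E$ containing $K$. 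Your argument, quantified only over the base field, proves a strictly weaker statement.

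The gap is easily closed with the ingredients you already chose, and it is worth seeing why the paper set things up as it did. The hypotheses ``$\dim q=8$, trivial discriminant, $\clif(q)$ of index at most~$2$'' and ``$\dim q=12$, trivial discriminant, $\clif(q)$ split'' are stable under scalar extension, but \emph{anisotropy is not}: $q_E$ may become isotropic, so $q_E$ need not be of type $E_7$ or $E_8$ over $E$. This is precisely why Proposition~\ref{prop:E7G=H} and Corollary~\ref{cor:E8G=H} are stated for arbitrary nondegenerate (possibly isotropic) forms, and why they---rather than the anisotropic $\Hyp_2$ statements of Theorem~\ref{abc2}---are the right results to quote. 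Running your sandwich $\Hyp(q_E)\subseteq G^+(q_E)\subseteq G(q_E)=\Hyp(q_E)$ over every extension $E/K$ then yields $\PGO_+(q)(E)/R=1$ for all $E$, which is the assertion. (Incidentally, your care about proper versus arbitrary similitudes can be dispatched more directly: in even dimension, composing with a reflection converts an improper similitude into a proper one with the same multiplier, so $G^+(q_E)=G(q_E)$ in all the cases at hand.)
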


\begin{proof}
  By \cite[Thm.~1]{merkurjev1}, it suffices to prove that
  $G(q_E)=\Hyp(q_E)$ for every field $E$ containing $K$. If $q$ is of
  type $E_7$, this property readily follows from
  Proposition~\ref{prop:E7G=H}. If it is of type $E_8$, it follows
  from Corollary~\ref{cor:E8G=H}.
\end{proof}

\begin{remark}
  Skip Garibaldi has observed that if $q$ is of type $E_7$, then it
  follows from Lemma~\ref{lem:dim8} and \cite[Prop.~6.1]{garibaldi}
  that the group $\PGO_+(q)$ is actually stably rational.
\end{remark}

\smallskip
\section{Quadrangular algebras and proofs of Theorems~\ref{abc2}
  and~\ref{abc3}}\label{abc4}

\noindent
Most of this section is devoted to results about quadrangular
algebras. At the very end of this section, we use these results to
prove Theorems~\ref{abc2} and~\ref{abc3}.

The notion of a quadrangular algebra arose in the course of the
classification of Moufang polygons; see, in particular,
Chapters~12--13 and~27 in \cite{TW}.  For the definition, see
\cite[1.17]{weiss-quad}.

\begin{proposition}\label{abc7}
  Let $(K,L,q)$ be a quadratic space of type $E_7$ or $E_8$ as defined
  in \ref{abc5} and~\ref{abc5a} and suppose that $q(1)=1$ for a
  distinguished element $1$ of $L$. Then there exists a unique
  quadrangular algebra
$$\Xi=(K,L,q,1,X,\cdot,h,\theta)$$
as defined in \cite[1.17]{weiss-quad}.
\end{proposition}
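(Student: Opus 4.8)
The plan is to produce the extra data $(X,\cdot,h,\theta)$ from the even Clifford algebra attached to $(q,1)$ and then to appeal to the construction of quadrangular algebras of type $E_7$ and $E_8$ developed in \cite{weiss-quad}. By Definitions~\ref{abc5} and~\ref{abc5a}, a space of type $E_7$ or $E_8$ carries a norm splitting $q\cong\qform{\alpha_1,\dots,\alpha_n}\cdot N$ over a separable quadratic extension $E/K$, with $n=4$ or $n=6$; this splitting is exactly the input from which the associated quadrangular algebra is built, so the existence assertion amounts to checking that that construction goes through in both characteristics.

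For the underlying module I would argue structurally. The defining identity $(a\cdot v)\cdot v=q(v)\,a$ together with $a\cdot1=a$ makes $X$ a module over the Clifford algebra with base point $C(q,1)$, which by \cite[12.51]{TW} is isomorphic to $C_0(q)$. By Remark~\ref{abc88}, in the $E_7$ case $C_0(q)\cong M(4,D)\oplus M(4,D)$ with $D$ the quaternion division algebra representing $\clif(q)$, and the analogous computation in the $E_8$ case gives $C_0(q)\cong M(32,K)\oplus M(32,K)$ because $\clif(q)$ is split. In each case I would take $X$ to be a simple right module over one of the two simple factors, so that $\dim_K X=16$ or $32$, with $\cdot$ induced by Clifford multiplication; the map $h\colon X\times X\to L$ is then the essentially unique module-compatible hermitian-type form, and $\theta$ is the remaining pairing, both normalized against the base point using $q(1)=1$. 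Existence is completed by verifying the axioms of \cite[1.17]{weiss-quad}; this is the bulk of the work, but it is precisely the verification carried out in \cite{weiss-quad}, now to be read in arbitrary characteristic.

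For uniqueness, suppose $\Xi=(K,L,q,1,X,\cdot,h,\theta)$ and $\Xi'=(K,L,q,1,X',\cdot',h',\theta')$ share the same pointed quadratic space. Both $X$ and $X'$ are $C(q,1)$-modules of the type singled out by the construction, and since the relevant simple factor has a unique simple module up to isomorphism, there is a $C(q,1)$-module isomorphism $X\to X'$ intertwining $\cdot$ and $\cdot'$. The main obstacle is that this isomorphism must be adjusted to carry $h,\theta$ to $h',\theta'$ as well: fixing the module up to isomorphism is routine, whereas matching the forms requires the rigidity of invariant forms---the space of module-invariant forms on $X$ is one-dimensional over the relevant center, so $h$ and $h'$ can differ only by a scalar, which the normalization at $1$ pins down, after which $\theta$ is forced by its axioms. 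I would treat this step---especially its reading when $\charac(K)=2$, where the polar form $f=\partial q$ degenerates---as the technical heart, relying throughout on the characteristic-free development in \cite{weiss-quad}.
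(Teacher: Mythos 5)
Your route is, at bottom, the paper's: the paper's entire proof consists of two citations --- existence is \cite[Thm.~10.1]{weiss-quad} and uniqueness is \cite[6.42]{weiss-quad} --- and everywhere your proposal does actual work it, too, defers to the characteristic-free development in \cite{weiss-quad}. Your structural description of $X$ is correct and consistent with Remark~\ref{abc8} and the proof of Proposition~\ref{abc9}: exactly one simple summand of $C(q,1)\cong C_0(q)$ acts nontrivially on $X$ (this is \cite[12.55]{TW}), and $X$ is a simple right module over that summand, of $K$-dimension $16$ for type $E_7$ and $32$ for type $E_8$.

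The genuine gap is in the uniqueness step, which you rightly call the technical heart but then settle by a rigidity claim that fails. The uniqueness asserted in the proposition is only uniqueness up to the equivalence defined in \cite[1.22]{weiss-quad}, and under the natural maps of this theory $\theta$ is matched only modulo $\langle v\rangle$: see Proposition~\ref{abc9}(iii), whose conclusion is a congruence $\bmod\ \langle\varphi(v)\rangle$. So the assertion that ``$\theta$ is forced by its axioms'' once the module structure and $h$ are matched is not true in the on-the-nose sense your argument requires. Moreover, $\theta$ is quadratic, not linear, in the $X$-variable (the paper uses exactly this in Definition~\ref{abc13c}, where $a\mapsto f(\theta(a,1),1)$ is a quadratic form on $X$), so in characteristic~$2$ --- the case you single out --- $\theta$ carries strictly more information than its polarization, and no formal argument from the module structure together with $h$ can recover it. Likewise, the Schur-type claim that the module-invariant forms on $X$ form a one-dimensional space does not apply to $h$ as stated: $h$ is $L$-valued rather than $K$-valued, and it is constrained jointly with $\theta$ by the axioms (B) and (C) of \cite[1.17]{weiss-quad}, not by invariance alone. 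What your sketch asserts is precisely the content of \cite[6.42]{weiss-quad}, a substantive theorem proved via the structure theory of proper quadrangular algebras rather than by module-theoretic rigidity; the correct repair is simply to cite it (together with \cite[Thm.~10.1]{weiss-quad} for existence), which is what the paper does.
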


\begin{proof}
  Existence holds by \cite[Thm.~10.1]{weiss-quad} and uniqueness (up
  to equivalence as defined in \cite[Thm.~1.22]{weiss-quad}) holds by
  \cite[6.42]{weiss-quad}.
\end{proof}

\begin{notation}\label{abc7x}
  For the rest of this section, we let
$$\Xi=(K,L,q,1,X,\cdot,h,\theta)$$
be as in \ref{abc7} and $f=\partial q$. By
\cite[Prop.~4.2]{weiss-quad}, we can assume that $\Xi$ is
$\delta$-standard for some $\delta\in L$ as defined in
\cite[4.1]{weiss-quad}. (This allows us to use the identities in
Chapter~4 of \cite{weiss-quad}.) In addition, we let $\sigma$ be as
\cite[1.2]{weiss-quad}, we let $u^{-1}$ for all non-zero $u\in L$ be
as in \cite[1.3]{weiss-quad} and we let $\pi$ be as in
\cite[1.17(D1)]{weiss-quad}.
\end{notation}

\begin{remark}\label{abc8}
  Suppose that $(K,L,q)$ is of type $E_7$ and let $C(q,1)$ and $D$ be
  as in \ref{abc88}. By \eqref{abc88a} and \cite[1.17(A1)-(A3) and
  Prop.~2.22]{weiss-quad}, there exists a unique map $*$ from $D\times
  X$ to $X$ with respect to which $X$ is a left vector space over $D$,
  $ta=t*a$ for all $(a,t)\in X\times K$ and $w*(a\cdot v)=(w*a)\cdot
  v$ for all $w\in D$, $a\in X$ and $v\in L$.  This map is given
  explicitly in \cite[3.6]{weiss-crelle}.
\end{remark}

\begin{proposition}\label{abc9}
  Suppose that $(K,L,q)$ is of type $E_7$, let $D$ and $*$ be as in
  \ref{abc8}, let $\varphi_1$ be a similitude of $q$, let
  $u=\varphi_1(1)$ and let
$$a\ \hat\cdot\ v=(a\cdot v)\cdot u^{-1}$$
for all $(a,v)\in X\times L$, where $u^{-1}$ is as in \ref{abc7x}.
Then there exists a similitude $\varphi$ with the same multiplier as
$\varphi_1$ such that $u=\varphi(1)$, an element $\omega\in K^\times$
and a $D$-linear automorphism $\psi$ of $X$ such that the following
hold:
\begin{thmenum}
\item $\psi(a\cdot v)=\psi(a)\ \hat\cdot\ \varphi(v)$ for all $a\in X$
  and all $v\in L$.
\item $\varphi(h(a,b))=\omega h(\psi(a),\psi(b)u)$ for all $a,b\in X$.
\item $\varphi(\theta(a,v))\equiv\omega\theta(\psi(a),\varphi(v)) \
  ({\rm mod}\ \langle\varphi(v)\rangle)$ for all $a\in X$ and all
  $v\in L$.
\end{thmenum}
\end{proposition}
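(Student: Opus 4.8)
The plan is to recognize conditions (i)--(iii) as the assertion that the pair $(\varphi,\psi)$ is an \emph{isotopy} of the quadrangular algebra $\Xi$: the similitude $\varphi$ carries the base point $1$ to $u=\varphi_1(1)$, and the change of base point from $1$ to $u$ is precisely what replaces the module action $\cdot$ by the twisted action $\hat\cdot$. Since $q(u)=q(\varphi_1(1))=\mu$, where $\mu$ is the multiplier of $\varphi_1$, the scaled form $\mu^{-1}q$ is similar to $q$, hence again of type $E_7$, and has $u$ as a base point. By Proposition~\ref{abc7} it therefore carries a quadrangular algebra structure on the same space $X$, unique up to equivalence. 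The heart of the matter is thus to produce the $D$-linear map $\psi$ on $X$ intertwining the old structure (at $1$) with the twisted one (at $u$), and then to invoke uniqueness for the remaining structure maps $h$ and $\theta$.

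To construct $\psi$ I would work with the right-multiplication operators $R_v\colon a\mapsto a\cdot v$, which realize $X$ as a module over the Clifford algebra $C(q,1)\cong C_0(q)$ of Remark~\ref{abc88}, with $1\in L$ acting as the identity. The twisted operators satisfy $\hat R_w=R_{u^{-1}}R_w$, so $\hat\cdot$ is exactly the module action obtained by using $u$ in place of $1$ as base point for the embedding of $L$. The similitude $\varphi$ induces an automorphism of $C_0(q)\cong M(4,D)\oplus M(4,D)$ carrying the base-point-$1$ copy of $L$ onto the base-point-$u$ copy; transporting the module structure along this automorphism and applying Schur's lemma to the semisimple $C_0(q)$-module $X$ yields a $K$-linear bijection $\psi$ with $\psi(a\cdot v)=\psi(a)\ \hat\cdot\ \varphi(v)$, which is~(i). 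One checks that $\psi$ may be chosen to commute with the left $D$-action $*$ of Remark~\ref{abc8}, since that action is the commutant datum untouched by the automorphism; this gives the required $D$-linearity. Here the explicit description of $*$ in \cite[3.6]{weiss-crelle} and the $\delta$-standard identities of \cite[Ch.~4]{weiss-quad} keep the computation honest. The similitude $\varphi$ produced in this way has multiplier $\mu$ and sends $1$ to $u$, as required, though a priori one need not identify it with $\varphi_1$ itself.

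Finally, (ii) and (iii) follow from uniqueness. Both $(a,b)\mapsto\varphi\bigl(h(a,b)\bigr)$ and the transported hermitian map built from $h$, $\psi$ and the shift to $u$ serve as the $h$-datum of the quadrangular algebra carried by $(\mu^{-1}q,u)$ on $X$; by the uniqueness in Proposition~\ref{abc7} they agree up to a single scalar $\omega\in K^\times$, and rewriting the transported form in terms of $h$ and the base-point shift (this is where the factor $\psi(b)u$ enters) gives exactly~(ii). The same comparison applied to $\theta$, which by axiom \cite[1.17(D1)]{weiss-quad} is only defined modulo the distinguished line, yields~(iii) as a congruence modulo $\langle\varphi(v)\rangle$ with the same constant $\omega$. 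The main obstacle is the construction of $\psi$ in the middle step: one must correctly match $\hat\cdot$ with the base-point-$u$ Clifford embedding, control the two simple components of $C_0(q)$ (a proper versus improper similitude may interchange them), and track the similitude and base-point normalizations carefully enough that a \emph{single} scalar $\omega$ governs both (ii) and (iii).
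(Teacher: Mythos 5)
Your outline follows the same route as the paper's proof: interpret $\hat\cdot$ as the Clifford-module action at the new base point $u$ (the isotope of $\Xi$ at $u$, \cite[8.7 and Prop.~8.1]{weiss-quad}), let the similitude induce an isomorphism of Clifford algebras with base point, obtain a $D$-linear $\psi$ by identifying the simple summands that act nontrivially on $X$ with $\End_D(X)$, and deduce (ii) and (iii) with a single scalar $\omega$ from uniqueness of the remaining quadrangular-algebra data (the paper cites \cite[1.25 and Prop.~6.38]{weiss-quad} for that last step).

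But there is a genuine gap, and it sits exactly at what you call ``the main obstacle'' and then leave unresolved. By \cite[12.55]{TW}, exactly one of the two summands in $C(q,1)\cong M(4,D)\oplus M(4,D)$ acts nontrivially on $X$ via $\cdot$, and likewise exactly one summand of $C(q/q(u),u)$ acts nontrivially via $\hat\cdot$. Your transport-plus-Schur argument produces a $\psi$ satisfying (i) only when the isomorphism induced by the chosen similitude carries the active summand to the active summand; if it carries active to inactive, the two module structures have different annihilators, so no bijection $\psi$ satisfying (i) exists \emph{for that similitude}, and Schur's lemma gives nothing. As written, your construction is also circular: $\varphi$ is both the input that induces the automorphism and the output ``produced in this way.'' The missing idea is the correction mechanism, which is the real content of the paper's proof: by \cite[12.54]{TW} there is an isometry $\rho$ of $q$ fixing $1$ that extends to an automorphism of $C(q,1)$ interchanging the two summands, and one sets $\varphi=\varphi_1\circ\rho^j$ with $j\in\{0,1\}$ chosen so that the active summands correspond. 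Since $\rho$ is an isometry fixing $1$, this $\varphi$ retains the multiplier of $\varphi_1$ and satisfies $\varphi(1)=u$; this is precisely why the proposition asserts the existence of \emph{some} similitude $\varphi$ rather than making claims (i)--(iii) for $\varphi_1$ itself. With that step supplied, the rest of your argument matches the paper's.
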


\begin{proof}
  The map $\varphi_1$ is an isomorphism of pointed quadratic spaces
  from $(K,L,q,1)$ to $(K,L,q/q(u),u)$. It therefore induces an
  isomorphism of Clifford algebras with base point from $C(q,1)$ to
  $C(q/q(u),u)$. Let $\hat\Xi$, $\hat h$ and $\hat\theta$ be as in
  \cite[Prop.~8.1]{weiss-quad}; thus,
  $\hat\Xi=(K,L,q/q(u),u,X,\,\hat\cdot\,,\hat h,\hat\theta)$ is the
  isotope of $\Xi$ at $u$ as defined in \cite[8.7]{weiss-quad}.  By
  \cite[1.17(A1)--(A3) and Prop.~2.22]{weiss-quad} applied to both
  $\Xi$ and to $\hat\Xi$, $X$ is a right $C(q,1)$-module with respect
  to $\cdot$ and a right $C(q/q(u),u)$-module with respect to
  $\hat\cdot$.  By \cite[12.55]{TW} (where the base point $1$ is
  called $\epsilon$), exactly one of the two direct summands in
  \eqref{abc88a} acts nontrivially on $X$, and by \cite[12.54]{TW},
  there exists an isometry $\rho$ of $q$ fixing $1$ that extends to an
  automorphism of $C(q,1)$ interchanging the two direct summands. Thus
  for $j=0$ or $1$, the composition $\varphi_1\circ\rho^j$ maps the
  direct summand $A$ of $C(q,1)$ acting nontrivially on $X$ to the
  direct summand $A_u$ of $C(q/q(u),u)$ acting nontrivially on
  $X$. Let $\varphi=\varphi_1\circ\rho^j$.  Choosing a basis for $X$
  as a left vector space over $D$, we can identify both $A$ and $A_u$
  with ${\rm End}_D(X)$.  It follows that there exists a $D$-linear
  automorphism $\psi$ of $X$ such that (i) holds. By \cite[1.25 and
  Prop.~6.38]{weiss-quad}, there exists $\omega\in K^\times$ such that
  also (ii) and (iii) hold.
\end{proof}

\begin{notation}\label{abc10}
  Let $g$ and $\phi$ be the maps that appear in
  \cite[1.17(C3)-(C4)]{weiss-quad}, let
$$(U_+,U_1,U_2,U_3,U_4)$$
be the root group sequence and $x_4$ the isomorphism from $L$ to $U_4$
obtained by applying the recipe in \cite[16.6]{TW} to $\Xi$, $g$ and
$\phi$, let $\Gamma$ be the corresponding Moufang quadrangle (see
\cite[8.11]{TW}), let $G^\dagger$ be the subgroup of ${\rm
  Aut}(\Gamma)$ generated by the root groups of $\Gamma$, let $H_0$ be
the subgroup of ${\rm Aut}(\Gamma)$ defined in
\cite[1.4]{weiss-crelle} (or \cite[11.20]{weiss-quad}), let
$G=H_0\cdot G^\dagger$ and let $H^\dagger=H_0\cap G^\dagger$.  By
\cite[35.11]{TW}, the similarity class of $(K,L,q)$ is an invariant of
$\Gamma$.
\end{notation}

\begin{proposition}\label{abc10a}
  $G/G^\dagger\cong H_0/H^\dagger$ and if $(K,L,q)$ is of type $E_8$,
  then $G$ is the group of $K$-rational points of an absolutely simple
  algebraic group with Tits index $E_{8,2}^{66}$, and every such group
  arises in this way starting with some quadratic space of type $E_8$
  defined over $K$.
\end{proposition}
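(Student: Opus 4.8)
The plan is to separate the two assertions. The isomorphism $G/G^\dagger\cong H_0/H^\dagger$ is a formal consequence of the definitions together with the normality of the little projective group, whereas the identification of $G$ with the rational points of an algebraic group of Tits index $E_{8,2}^{66}$ rests on the classification of Moufang quadrangles of type $E_8$ in \cite{TW}.

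For the first assertion I would argue as follows. The group $G^\dagger$ is generated by all the root groups of $\Gamma$, and any element of $\mathrm{Aut}(\Gamma)$ permutes the root groups among themselves; hence $G^\dagger$ is a normal subgroup of $\mathrm{Aut}(\Gamma)$, and in particular of $G$. Since $G=H_0\cdot G^\dagger$ and $H^\dagger=H_0\cap G^\dagger$ by definition (Notation~\ref{abc10}), the second isomorphism theorem gives
$$G/G^\dagger=(H_0G^\dagger)/G^\dagger\cong H_0/(H_0\cap G^\dagger)=H_0/H^\dagger.$$
This step should present no difficulty.

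Now suppose $(K,L,q)$ is of type $E_8$. The quadrangle $\Gamma$ is obtained by applying the recipe of \cite[16.6]{TW} to $\Xi$ (Notation~\ref{abc10}), and is therefore a Moufang quadrangle of type $E_8$. By \cite[42.6]{TW} such a quadrangle is the spherical building of an absolutely simple algebraic group $\mathbf{G}$ over $K$ whose Tits index is $E_{8,2}^{66}$; since the center of a group of type $E_8$ is trivial, $\mathbf{G}(K)$ acts faithfully on $\Gamma$ and may be viewed as a subgroup of $\mathrm{Aut}(\Gamma)$. Under this identification the little projective group $G^\dagger$ is the subgroup $\mathbf{G}(K)^+$ generated by the $K$-points of the unipotent radicals of the minimal parabolics. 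I would then match $G$ with $\mathbf{G}(K)$ by invoking the Bruhat decomposition: writing $\mathbf{S}$ for a maximal $K$-split torus and $\mathbf{Z}=C_{\mathbf{G}}(\mathbf{S})$ for the associated Levi subgroup, one has $\mathbf{G}(K)=\mathbf{Z}(K)\cdot\mathbf{G}(K)^+$, and $\mathbf{Z}(K)$ fixes the fundamental apartment pointwise, so that its image in $\mathrm{Aut}(\Gamma)$ is precisely the group $H_0$ of \cite[11.20]{weiss-quad} and \cite[1.4]{weiss-crelle}. Hence $G=H_0\cdot G^\dagger=\mathbf{G}(K)$.

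Finally, the surjectivity statement follows from the classification: by \cite[42.6]{TW} the building of any group with Tits index $E_{8,2}^{66}$ is a Moufang quadrangle of type $E_8$, and by \cite[16.6]{TW} together with Proposition~\ref{abc6}(ii) every such quadrangle is built from a quadratic space of type $E_8$ over $K$, which is the starting datum. The main obstacle is the middle identification $G=\mathbf{G}(K)$: one must verify that the abstractly defined $H_0$ coincides with the image of $\mathbf{Z}(K)$---neither larger nor smaller---and this matching of the ``torus'' level of the root group sequence with the rational points of the Levi is where the detailed structure theory of \cite{TW} and the explicit description of $H_0$ in \cite{weiss-crelle} are indispensable.
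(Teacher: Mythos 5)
Your proof is correct and follows essentially the same route as the paper: the paper's own proof consists of two citations, namely the top of page~193 of \cite{weiss-crelle} for the isomorphism $G/G^\dagger\cong H_0/H^\dagger$ --- which is exactly the normality-plus-second-isomorphism-theorem argument you spell out, since $G^\dagger$ is normalized by all of $\mathrm{Aut}(\Gamma)$ and $G=H_0\cdot G^\dagger$, $H^\dagger=H_0\cap G^\dagger$ by definition --- and \cite[42.6]{TW} for the identification of $G$ with the group of $K$-rational points of a group with Tits index $E_{8,2}^{66}$ and for the surjectivity of the construction. Your Bruhat-decomposition sketch matching $H_0\cdot G^\dagger$ with $\mathbf{Z}(K)\cdot\mathbf{G}(K)^+=\mathbf{G}(K)$ is a faithful reconstruction of what that citation encapsulates, and the step you flag as the main obstacle (identifying $H_0$ with the image of the Levi) is precisely the point the paper also delegates to \cite[42.6]{TW} rather than reproving.
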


\begin{proof}
  For the isomorphism $G/G^\dagger\cong H_0/H^\dagger$, see the top of
  page~193 of \cite{weiss-crelle} (where $G$ is called $G_0$), The
  remaining assertions hold by \cite[42.6]{TW}.
\end{proof}

\begin{proposition}\label{abc11}
  Suppose that $(K,L,q)$ is of type $E_7$ and that $\varphi_1$ is a
  similitude of $q$. Let $H_0$ and $x_4$ be as in \ref{abc10}. Then
  there exist an element $h$ of $H_0$ and a similitude $\varphi$ of
  $q$ with the same multiplier as $\varphi_1$ such that
$$x_4(v)^h=x_4(\varphi(v))$$ 
for all $v\in L$.
\end{proposition}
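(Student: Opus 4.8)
The plan is to manufacture the element $h\in H_0$ and the similitude $\varphi$ directly from the data produced by Proposition~\ref{abc9}. First I would apply Proposition~\ref{abc9} to $\varphi_1$, obtaining a similitude $\varphi$ with the same multiplier as $\varphi_1$, a scalar $\omega\in K^\times$, and a $D$-linear automorphism $\psi$ of $X$ satisfying the three identities (i)--(iii). Writing $u=\varphi(1)$ and recalling the isotope $\hat\Xi=(K,L,q/q(u),u,X,\,\hat\cdot\,,\hat h,\hat\theta)$ from the proof of~\ref{abc9}, these identities say precisely that the pair $(\varphi,\psi)$, up to the scalar $\omega$, is an isomorphism of quadrangular algebras from $\Xi$ to $\hat\Xi$: (i) matches the module action $\cdot$ with $\hat\cdot$, while (ii) and (iii) identify $h$ and $\theta$ with the isotoped maps $\hat h$ and $\hat\theta$ up to the factor $\omega$.

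Next I would feed this isomorphism into the recipe of \cite[16.6]{TW} that builds the root group sequence $(U_+,U_1,U_2,U_3,U_4)$ from a quadrangular algebra. Since $\hat\Xi$ is an isotope of $\Xi$, it produces the same Moufang quadrangle $\Gamma$, so an isomorphism $\Xi\to\hat\Xi$ should yield an automorphism of $\Gamma$ fixing the standard apartment. Concretely, I would define a map on the generators of the root groups by prescribing its effect on each $U_i$: on $U_4\cong L$ by $x_4(v)\mapsto x_4(\varphi(v))$, on $U_1$ and $U_3\cong X$ by $\psi$ suitably twisted by $\varphi$ and $\omega$, and on $U_2\cong L$ by an appropriate scalar multiple of $\varphi$. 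The scalars here are forced by the requirement that $q$ be rescaled to $q/q(u)$ and that $h$, $\theta$ be rescaled by $\omega$, so there is essentially no freedom once the normalization on $U_4$ is fixed.

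I would then verify that these prescriptions are compatible with all the commutator relations defining $\Gamma$, so that together they extend to a single automorphism $h$ of $\Gamma$. This is the crux of the argument and the main obstacle: each defining relation involves one of the maps $g$, $\phi$, $h$, $\theta$, and the verification amounts to translating that relation through $(\varphi,\psi,\omega)$ and invoking the corresponding identity (i), (ii), or (iii), while checking that the scalar normalizations chosen for the four root groups are mutually consistent. Finally, because $h$ fixes the chosen apartment and stabilizes each root group, it lies in the subgroup $H_0$ defined in \cite[1.4]{weiss-crelle} (equivalently \cite[11.20]{weiss-quad}); by construction its action on $U_4$ is $x_4(v)^h=x_4(\varphi(v))$, and $\varphi$ has the same multiplier as $\varphi_1$, as required.
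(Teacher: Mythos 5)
Your overall strategy---apply Proposition~\ref{abc9} to $\varphi_1$ and then promote the resulting data $(\varphi,\psi,\omega)$ to an element of $H_0$ acting on $U_4$ as $x_4(v)\mapsto x_4(\varphi(v))$---is the same route the paper takes, and your reading of the identities (i)--(iii) as saying that $(\varphi,\psi)$ is an isomorphism from $\Xi$ onto the isotope $\hat\Xi$ (up to the scalar $\omega$) is the right interpretation. The gap is in how the promotion is accomplished. You propose to define $h$ on each of $U_1,\dots,U_4$ by explicit formulas and then ``verify that these prescriptions are compatible with all the commutator relations defining $\Gamma$''; you correctly flag this as the crux, but you do not carry it out, and for quadrangles of type $E_7$ this is not a routine check: the relations of \cite[16.6]{TW} involve the maps $g$ and $\phi$ of \cite[1.17(C3)-(C4)]{weiss-quad}, and tracking the scalar normalizations through them is a substantial computation. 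The paper avoids doing any of this by invoking existing machinery: by \cite[Prop.~12.5]{weiss-quad}, the identities (i)--(iii) of Proposition~\ref{abc9} say precisely that $(\varphi,\psi)$ lies in the \emph{structure group} of $\Xi$ as defined in \cite[12.4]{weiss-quad}, and \cite[Thm.~12.11]{weiss-quad} (together with \cite[11.22]{weiss-quad}) is exactly the statement that an element of the structure group is induced by an element of $H_0$ whose action on $U_4$ is given by $\varphi$.

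So your plan would succeed if completed---the compatibility you defer is true, being the content of \cite[Thm.~12.11]{weiss-quad}---but as written the proof is incomplete at its central step, and what you leave to verify is essentially a re-proof of that theorem. To close the gap, either cite the structure-group results \cite[12.4, Prop.~12.5 and Thm.~12.11]{weiss-quad}, or be prepared to reproduce several pages of identity-checking in the quadrangular algebra.
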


\begin{proof}
  Let $\varphi$ and $\psi$ be the maps obtained by applying
  Proposition~\ref{abc9} to $\varphi_1$.  By Proposition~\ref{abc9}
  and \cite[Prop.~12.5]{weiss-quad}, the pair $(\varphi,\psi)$ is
  contained in the structure group of $\Xi$ (as defined in
  \cite[12.4]{weiss-quad}).  The claim follows now by
  \cite[Thm.~12.11]{weiss-quad} and the first few lines of its proof
  (as well as \cite[11.22]{weiss-quad}).
\end{proof}

From now on we identify $K$ with its image under the map $t\mapsto
t\cdot 1$ from $K$ to $L$. Thus when we write $\pi(a)+t$ for $(a,t)\in
X\times K$, for example, we mean $\pi(a)+t\cdot 1$ (where $\pi$ is as
in \ref{abc7x}).

\begin{proposition}\label{abc12}
  Let $a$ be a non-zero element of $X$, let
$$p(x)=x^2-f(1,\pi(a))x+q(\pi(a))\in K[x],$$
let $E$ be the splitting field of $p(x)$ over $K$ and let $N$ be the
norm of the extension $E/K$.  Let $T(v)=\theta(a,v)$ for all $v\in L$
and let $I$ be the identity automorphism of $L$. Then
$N(E^\times)=K^{\times2}\cdot\{q(\pi(a)+t)\mid t\in K\}$, $q_E$ is
hyperbolic and for each $t\in K$, $T+tI$ is a similitude of $q$ with
multiplier $q(\pi(a)+t)$.
\end{proposition}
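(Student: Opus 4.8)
The plan is to reduce all three assertions to a short list of identities for the map $T:=\theta(a,\,\cdot\,)$, extracted from the defining axioms of the quadrangular algebra $\Xi$ and the $\delta$-standard identities of \cite[Ch.~4]{weiss-quad}. Writing $\alpha=f(1,\pi(a))$ and $\beta=q(\pi(a))$, so that $p(x)=x^{2}-\alpha x+\beta$, the identities I would aim to establish are $\theta(a,1)=\pi(a)$ and, for all $v\in L$,
\[
q\bigl(\theta(a,v)\bigr)=\beta\,q(v),\qquad
f\bigl(v,\theta(a,v)\bigr)=\alpha\,q(v),\qquad
\theta\bigl(a,\theta(a,v)\bigr)=\alpha\,\theta(a,v)-\beta\,v,
\]
the last of which says precisely that $p(T)=0$. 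Granting these, everything else is formal.

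For the similitude claim I would expand, for $t\in K$,
\[
q\bigl((T+tI)(v)\bigr)=q\bigl(\theta(a,v)\bigr)+t\,f\bigl(v,\theta(a,v)\bigr)+t^{2}q(v)=\bigl(\beta+\alpha t+t^{2}\bigr)q(v),
\]
and observe that $\beta+\alpha t+t^{2}=q(\pi(a))+t\,f(\pi(a),1)+t^{2}q(1)=q(\pi(a)+t)$, using $q(1)=1$ and the convention $t=t\cdot1$. Hence $T+tI$ is a similitude of $q$ with multiplier $q(\pi(a)+t)$.

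For the remaining two assertions I would first record that $p$ is separable and irreducible here: irreducible because the anisotropy of $q$ forces its restriction to $\langle1,\pi(a)\rangle$ to be anisotropic, so $p$ has no root in $K$; and separable because that plane is $f$-nondegenerate, i.e.\ $\alpha\neq0$ when $\charac(K)=2$ (this should fall out of the axioms, and is in any case what makes $E$ a separable quadratic extension). The relation $p(T)=0$ then equips $L$ with an $E$-vector space structure by $\gamma\cdot v=T(v)$ for $\gamma$ a root of $p$, and the three identities above give $q(z\cdot v)=N(z)\,q(v)$ for all $z\in E$ and $v\in L$; this is condition~(i) of Proposition~\ref{abc14a}, whose equivalence with~(v) shows $q_E$ hyperbolic. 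For the norm group, the values $q(1)=1$, $q(\pi(a))=\beta$ and $f(1,\pi(a))=\alpha$ exhibit an isometry of the restriction of $q$ to $\langle1,\pi(a)\rangle$ with $(K,E,N)$ sending $1\mapsto1$ and $\pi(a)\mapsto\gamma$; reading off the nonzero values of $N$ and factoring squares out of the coefficient of $\pi(a)$ then gives $N(E^\times)=K^{\times2}\cdot\{q(\pi(a)+t)\mid t\in K\}$.

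The crux, and the step I expect to be hardest, is establishing the four identities for $\theta(a,\,\cdot\,)$ directly from \cite[1.17]{weiss-quad}. The multiplier identity $q(\theta(a,v))=\beta\,q(v)$ and the quadratic relation $p(T)=0$ are the delicate points, since $\theta$ is defined only through the module product ``$\cdot$'' and the maps $h$ and $\pi$; pinning them down should require the $\delta$-standard normalization of Notation~\ref{abc7x} together with several of the bilinear identities of \cite[Ch.~4]{weiss-quad}, with particular care in characteristic~$2$, where $f$ restricts degenerately to many planes. Once they are available, the three assertions follow as above, the only further inputs being Proposition~\ref{abc14a} and elementary facts about norms of quadratic extensions.
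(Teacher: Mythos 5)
Your overall strategy is the same as the paper's: the four identities you isolate are exactly what the paper cites from \cite[Props.~4.9(i), 4.21 and~4.22]{weiss-quad} (so they can be quoted rather than re-derived), the similitude computation and the norm-group computation are identical, and in the separable case the hyperbolicity of $q_E$ is obtained from Proposition~\ref{abc14a} exactly as you propose. One small point on irreducibility: anisotropy of $q$ alone is not quite enough, since if $\pi(a)$ were a multiple of $1$ the polynomial $p$ would have a root; you also need $\pi(a)\notin\langle1\rangle$, which is axiom (D2) of \cite[1.17]{weiss-quad}, and the paper's formulation $p(t)=q(\pi(a)-t)\neq0$ invokes precisely this.

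The genuine gap is in characteristic~$2$: your claim that $\alpha=f(1,\pi(a))\neq0$ ``should fall out of the axioms'' is false. Nothing in the axioms of a quadrangular algebra forces the plane $\langle1,\pi(a)\rangle$ to be nondegenerate for the bilinear form $f$; the elements $a$ with $f(\pi(a),1)=0$ are exactly those excluded from $X_{\rm sep}$ in Definition~\ref{abc13c}, and the paper's entire apparatus of inseparable similitudes (Definition~\ref{abc70}, Propositions~\ref{abc14p} and~\ref{abc13}) exists precisely because such $a$ can occur. For such $a$ your argument breaks down: $p(x)=x^2+\beta$ is inseparable, $E=K(\sqrt{\beta})$ is a purely inseparable quadratic extension, and Proposition~\ref{abc14a} --- whose hypotheses include separability of $p$ --- cannot be invoked, so your proof of the hyperbolicity of $q_E$ only covers $a\in X_{\rm sep}$. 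The paper closes this case separately: since $f(v,T(v))=f(\pi(a),1)q(v)=0$ for all $v$, and since the multiplier $\beta=q(\pi(a))$ is not in $K^{\times2}$ (by irreducibility of $p$), the map $T$ is an inseparable similitude in the sense of Definition~\ref{abc70}, whence Proposition~\ref{abc14p} gives $q\simeq\qform{1,\beta}\cdot q_0$ and in particular $q_E$ hyperbolic. Your proposal needs this second branch to prove the proposition as stated.
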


\begin{proof}
  By \cite[1.17(D2)]{weiss-quad}, $p(t)=q(\pi(a)-t)\ne0$ for each
  $t\in K$.  Thus $p(x)$ is irreducible over $K$. It follows that
  $N(E^\times)=K^{\times2}\cdot\{q(\pi(a)+t)\mid t\in K\}$.  By
  \cite[Props.~4.9(i) and~4.22]{weiss-quad}, $T+tI$ is a similitude of
  $q$ with multiplier $q(\pi(a)+t)$ for each $t\in K$ and
  $f(T(v),v)=f(\pi(a),1)q(v)$ for each non-zero $v\in L$.  By
  \cite[Prop.~4.21]{weiss-quad}, $p(T)=0$. Thus if $p(x)$ is
  separable, then $q_E$ is hyperbolic by Propositions~\ref{abc14a}.
  If $p(x)$ is inseparable, then $f(\pi(a),1)=0$, hence $T$ is
  inseparable and again $q_E$ is hyperbolic, this time by
  Proposition~\ref{abc14p}.
\end{proof}

\begin{definition}\label{abc13c}
  For each non-zero $a\in X$, the map $v\mapsto\theta(a,v)$ is a
  similitude of $q$ by Proposition~\ref{abc12}.  We call an element
  $a\in X$ {\it separable} if $a\ne0$ and the similitude
  $v\mapsto\theta(a,v)$ of $q$ is separable as defined in \ref{abc70}.
  We let $X_{\rm sep}$ denote the set of separable elements of
  $X$. Thus if ${\rm char}(K)\ne2$, then $X_{\rm
    sep}=X\backslash\{0\}$, but if ${\rm char}(K)=2$, then by
  \cite[Prop.~4.9(i)]{weiss-quad},
$$X_{\rm sep}=\{a\in X\mid f(\pi(a),1)\ne0\}.$$
If ${\rm char}(K)=2$, then by \cite[13.42--13.43]{TW}, $a\mapsto
f(\pi(a),1)$ is a nondegenerate quadratic form on $X$.  In particular,
the set $X_{\rm sep}$ is non-empty also if ${\rm char}(K)=2$.
\end{definition}

\begin{proposition}\label{abc13}
  Every inseparable similitude of $q$ (as defined in \ref{abc70}) is
  the product of two separable similitudes.
\end{proposition}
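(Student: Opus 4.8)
The plan is to reduce the problem to producing a single well-chosen separable similitude. Since $\charac(K)=2$, the form $f=\partial q$ is alternating and symmetric. Linearizing the defining relation $f(v,\varphi(v))=0$ exactly as in the proof of Proposition~\ref{abc14p} shows that any inseparable similitude $\varphi$ is self-adjoint for $f$, that is $f(\varphi(v),w)=f(v,\varphi(w))$, and satisfies $\varphi^2=\gamma\,I$, where $\gamma$ is its multiplier and $\gamma\notin K^{\times2}$. First I record that separability is preserved under inversion: writing $Q_\rho(v)=f(v,\rho(v))$ for a similitude $\rho$ with multiplier $\mu$, the substitution $v\mapsto\rho(v)$ gives $Q_{\rho^{-1}}(v)=Q_\rho(\rho^{-1}(v))$, while the multiplier of $\rho^{-1}$ is $\mu^{-1}$; hence $\rho^{-1}$ is inseparable if and only if $\rho$ is. Consequently, if I can find a separable similitude $\psi$ such that $\varphi\psi$ is again separable, then
\[
\varphi=(\varphi\psi)\,\psi^{-1}
\]
exhibits $\varphi$ as a product of two separable similitudes, and the proof is complete.

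It remains to find such a $\psi$. For any non-zero $a\in X$ and $t\in K$, Proposition~\ref{abc12} provides the similitude $\psi=T_a+tI$ with $T_a(v)=\theta(a,v)$, and by Definition~\ref{abc13c} this similitude is separable precisely when $a\in X_{\rm sep}$. Using the self-adjointness of $\varphi$ together with $f(v,\varphi(v))=0$, I compute
\[
Q_{\varphi\psi}(v)=f\bigl(v,\varphi(\theta(a,v))\bigr)+t\,f\bigl(v,\varphi(v)\bigr)=f\bigl(\varphi(v),\theta(a,v)\bigr),
\]
which is independent of $t$. Thus $\varphi\psi$ is separable as soon as $a\in X_{\rm sep}$ is chosen so that the quadratic form $v\mapsto f(\varphi(v),\theta(a,v))$ on $L$ is not identically zero.

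The heart of the argument---and the step I expect to be the main obstacle---is therefore to exhibit one separable $a$ with $f(\varphi(\cdot),\theta(a,\cdot))\not\equiv0$. Suppose, for contradiction, that $f(\varphi(v),\theta(a,v))=0$ for every $v\in L$ and every $a\in X_{\rm sep}$. Since the quadratic form $a\mapsto f(\pi(a),1)$ on $X$ is nondegenerate (Definition~\ref{abc13c}), the set $X_{\rm sep}$ is the complement of a proper quadric and hence spans $X$; combining this with the $\theta$-identities of \cite[Ch.~4]{weiss-quad} one propagates the vanishing to all $a\in X$. Fixing any non-zero $v_0$, this forces the image $\{\theta(a,v_0):a\in X\}$ into the hyperplane $\varphi(v_0)^{\perp}$ of $L$ with respect to the nondegenerate form $f$. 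The required contradiction then comes from the non-degeneracy of $\theta$ in its first variable, namely that for fixed non-zero $v_0$ the values $\theta(a,v_0)$ are not confined to any hyperplane of $L$; this is where the precise structure of the quadrangular algebra, via \cite[Props.~4.9, 4.22]{weiss-quad} and the identities of \cite[Ch.~4]{weiss-quad}, must be brought to bear. As a technically simpler alternative that sidesteps this non-vanishing analysis, it would suffice to produce a separable $\psi$ whose multiplier lies in the coset $\gamma K^{\times2}$: then $\varphi\psi$ has square multiplier and is automatically separable, and the same identity $\varphi=(\varphi\psi)\psi^{-1}$ concludes the proof.
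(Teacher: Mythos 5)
Your reduction is correct, and it is in fact the same opening move as the paper's proof: inverses of separable similitudes are separable, the separability form of $T_a+tI$ does not depend on $t$, and self-adjointness of $\varphi$ identifies the separability form of the composite with $v\mapsto f(\varphi(v),\theta(a,v))$, so the proposition reduces to finding $a\in X_{\rm sep}$ for which this form is not identically zero (the paper uses the mirror composition and reduces to the non-vanishing of $f(\theta(a,\varphi(v)),v)$). The trouble is that this non-vanishing statement is not a technical lemma to be sketched; it \emph{is} the proposition. You leave it unproven, and the paper's entire proof consists precisely of establishing it: assuming $f(\theta(a,\varphi(v)),v)=0$ for all $v\in L$ and all $a\in X_{\rm sep}$, one sets $w=\varphi(1)$, chooses $v\in\langle w\rangle^\perp\setminus\langle 1\rangle^\perp$ with $f(\theta(a,w),v)\neq0$, feeds the hypothesis back in through the fact that $a\cdot v\in X_{\rm sep}$ (\cite[Prop.~3.21]{weiss-quad}) and the identity \cite[1.17(C4)]{weiss-quad}, and then plays \cite[Props.~4.21 and~4.22]{weiss-quad} against each other to make $f(\theta(a,v),\theta(a,w))$ simultaneously nonzero and zero. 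Nothing of comparable substance appears in your proposal.

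Moreover, the route you sketch for the missing step has its own defects. First, vanishing for all $a\in X_{\rm sep}$ cannot be ``propagated to all $a\in X$'' by a spanning argument: $\theta$ is not additive in its first variable (it scales quadratically in $a$, and the addition law carries $h$-correction terms), so vanishing on a spanning set gives nothing by linearity; one would instead need, say, a Zariski-density argument using that $a\mapsto f(\varphi(v),\theta(a,v))$ is polynomial in $a$ and $K$ is infinite. Second, and decisively, the claim that $\{\theta(a,v_0):a\in X\}$ is not contained in \emph{any} hyperplane of $L$ is exactly the hard content: what the axioms give cheaply (via \cite[Prop.~4.9(i)]{weiss-quad}) is only that this set is not contained in the particular hyperplane $v_0^\perp$, and you give no argument that it avoids $\varphi(v_0)^\perp$. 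Finally, your fallback --- produce a separable similitude with multiplier in $\gamma K^{\times2}$ --- is not easier and is circular within the paper's logic: that the multiplier of an inseparable similitude is, modulo squares, also the multiplier of a separable one is obtained there only as a consequence of Proposition~\ref{abc13} (via Proposition~\ref{abc16}); note also that the obvious candidate, the swap similitude $(x,y)\mapsto(\gamma y,x)$ on the decomposition $q\simeq q_0\perp\gamma q_0$ provided by Proposition~\ref{abc14p}, is itself inseparable, since its separability form is $2\gamma\,\partial q_0(x,y)=0$ in characteristic~$2$. So the proposal sets the stage correctly but stops exactly where the real proof begins.
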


\begin{proof}
  Let $\varphi$ be an inseparable similitude of $q$, so ${\rm
    char}(K)=2$.  It suffices to show that
$$f\big(\theta(a,\varphi(v)),v\big)\ne0$$
for some $v\in L$ and some $a\in X_{\rm sep}$, where $X_{\rm sep}$ is
as in \ref{abc13c}.  Suppose this is false and let
$w=\varphi(1)$. Then
\begin{equation}\label{abc13a}
  f\big(\theta(a,w),1\big)=0
\end{equation}
for all $a\in X_{\rm sep}$. Furthermore,
\begin{equation}\label{abc13b}
  f(w,1)=0
\end{equation}
but $w\not\in\langle1\rangle$ since $\varphi$ is inseparable.  Choose
$a\in X_{\rm sep}$.  Since $f$ is nondegenerate, we can choose
$v\in\langle w\rangle^\perp
\backslash\langle1\rangle^\perp$. Replacing $v$ by $v+w$ if necessary,
we can assume in addition (by \cite[Prop.~4.9(i)]{weiss-quad} again)
that
\begin{equation}\label{abc13d}
  f(\theta(a,w),v)\ne0.
\end{equation}
By \cite[Prop.~3.21]{weiss-quad}, $av\in X_{\rm sep}$, so
$f\big(\theta(av,w),1)\big)=0$ by \eqref{abc13a}. By
\cite[1.17(C4)]{weiss-quad} and \eqref{abc13b}, it follows that
$$f(\theta(a,w^\sigma)^\sigma,1)q(v)=f(w,v^\sigma)f(\theta(a,v)^\sigma,1)
+f(\theta(a,v),w^\sigma)f(v^\sigma,1),$$ where $\sigma$ is as in
\ref{abc7x}. By \cite[1.4]{weiss-quad} and \eqref{abc13b}, we have
$x^\sigma=x$ for $x=1$ and $x=w$ and $f(x^\sigma,y)=f(x,y^\sigma)$ for
all $x,y\in L$.  Therefore
$$f(\theta(a,w^\sigma)^\sigma,1)=f(\theta(a,w),1)=0$$
by \eqref{abc13a} and
$$f(w,v^\sigma)=f(w^\sigma,v)=f(w,v)=0$$
by the choice of $v$ and hence
$$f(\theta(a,v),w)f(v,1)=f(\theta(a,v),w^\sigma)f(v^\sigma,1)=0.$$
Since $f(v,1)\ne0$ by the choice of $v$, we conclude that
$f(\theta(a,v),w)=0$. By \cite[Prop.~4.22]{weiss-quad}, therefore,
$f\big(\theta(a,\theta(a,v)),\theta(a,w)\big)=0$. By
\cite[Prop.~4.21]{weiss-quad}, it follows that
$$f(\pi(a),1)f\big(\theta(a,v),\theta(a,w)\big)=q(\pi(a))f\big(v,\theta(a,w)\big).$$
By \eqref{abc13d}, therefore,
$f\big(\theta(a,v),\theta(a,w)\big)\ne0$. By one more application of
\cite[Prop.~4.22]{weiss-quad}, however,
$f\big(\theta(a,v),\theta(a,w)\big)=q(\pi(a))f(v,w)=0$.
\end{proof}

\begin{proposition}\label{abc14b}
  Let $E/K$ be a separable quadratic extension such that $q_E$ is
  hyperbolic and let $V_i$ and $q_i$ for $i\in[1,d]$ be as in
  Proposition~\ref{abc14a}(iii) with $v_1=1$.  Then there exists $e\in
  X_{\rm sep}$ such that $\theta(e,V_i)=V_i$ for each $i\in[1,d]$.
\end{proposition}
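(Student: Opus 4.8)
The plan is to recognize the decomposition $L=V_1\perp\cdots\perp V_d$ as the decomposition of $L$ into $E$-lines for the $E$-vector space structure supplied by Proposition~\ref{abc14a}(i), and then to produce a single $e\in X_{\rm sep}$ whose associated similitude $\theta(e,\cdot)$ respects this structure. Concretely, fix the $E$-vector space structure with $q(u\cdot v)=N(u)q(v)$, so that each $V_i$ is a $1$-dimensional $E$-subspace and $V_1=E\cdot 1$. For any $e\in X_{\rm sep}$ the map $v\mapsto\theta(e,v)$ satisfies the irreducible quadratic $p_e$ of Proposition~\ref{abc12}, hence endows $L$ with an $E_e$-vector space structure whose lines are exactly the $\theta(e,\cdot)$-invariant $2$-spaces $\langle v,\theta(e,v)\rangle$. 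Thus $\theta(e,V_i)=V_i$ for all $i$ is equivalent to the assertion that $V_1,\dots,V_d$ are $E_e$-lines, and the cleanest way to guarantee this is to arrange that $\theta(e,\cdot)$ acts as multiplication by some $\mu\in E^\times\setminus K$ for the given $E$-structure, since such a map preserves every $E$-subspace.

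First I would use the hypothesis more fully. Because $q_E$ is hyperbolic, $E$ splits $\clif(q)$, and as $\clif(q)$ is represented by the quaternion algebra $D$ of Remark~\ref{abc88}, the separable quadratic extension $E$ embeds in $D$. Via the left $D$-module structure on $X$ from Remark~\ref{abc8}, a choice of embedding $E\hookrightarrow D$ and a generator $\mu$ of $E$ act on $X$, and I would seek $e$ inside a single $D$-line of $X$. The compatibility $w*(a\cdot v)=(w*a)\cdot v$ together with the explicit description of $\theta$ from \cite[3.6]{weiss-crelle} and the $\delta$-standard identities of \cite[Ch.~4]{weiss-quad} should let me compute $\theta(e,\cdot)$ and match it to multiplication by $\mu$; the separability $e\in X_{\rm sep}$ is then checked through $f(\pi(e),1)\neq0$ using the nondegenerate quadratic form of Definition~\ref{abc13c}.

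An alternative, more hands-on route propagates block by block. Having chosen $e$ so that the $E_e$-line through $1$ is $V_1$—that is, $\pi(e)=\theta(e,1)\in V_1$—I would use the orthogonality relation $f(\theta(e,v),\theta(e,w))=q(\pi(e))f(v,w)$ and the identities of \cite[Props.~4.21 and~4.22]{weiss-quad} to show that $\theta(e,\cdot)$ preserves $V_1^\perp=V_2\perp\cdots\perp V_d$ and, inductively, each summand; the invariance of $\langle v_i,\theta(e,v_i)\rangle$ under $\theta(e,\cdot)$ combined with these relations forces $\theta(e,v_i)\in V_i$ for an $E$-generator $v_i$ of $V_i$.

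The main obstacle is the simultaneity: imposing $\pi(e)\in V_1$ is a single linear-type constraint, but forcing the \emph{same} $e$ to make $\theta(e,\cdot)$ block-diagonal for all of $V_1,\dots,V_d$ at once is the crux, and it amounts to matching the $E_e$-structure defined by $e$ with the given $E$-structure (in particular $E_e\cong E$). I expect the quadrangular-algebra computation—identifying the $D$-action on $X$ with $E$-multiplication under $\theta$, and verifying that this lands in $X_{\rm sep}$ in characteristic~$2$—to be where the real work lies. The embedding $E\hookrightarrow D$ and the module compatibilities are what make me optimistic that such an $e$ exists.
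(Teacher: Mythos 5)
There is a genuine gap: you have set up the problem correctly---identifying the $V_i$ as $E$-lines for the structure of Proposition~\ref{abc14a}(i) and seeking $e$ whose map $\theta(e,\cdot)$ is compatible with it---but this is exactly where the paper's proof also starts, and the \emph{existence} of such an $e$ is the entire content of the proposition, which you defer rather than establish; your own text concedes this (``I expect the quadrangular-algebra computation \dots\ to be where the real work lies''). The paper closes precisely this gap by invoking the linking machinery of \cite{TW}: the multiplication map $T(v)=\gamma\cdot v$ and its twist $T^\epsilon$ (acting by the conjugate root $\gamma_1$ on $V_1$ and by $\gamma$ on $V_1^\perp$) are norm splitting maps in the sense of \cite[12.14]{TW}; by \cite[12.20 and 13.13(ii)]{TW} one of $T,T^\epsilon$ is \emph{linked} to the module action $(a,v)\mapsto a\cdot v$ at some point $e\in X$; and \cite[13.61]{TW} then yields both $e\in X_{\rm sep}$ and $R(v)=r\theta(e,v)+sv$ with $r\in K^\times$, $s\in K$, from which $\theta(e,V_i)=V_i$ follows since $R$ preserves each $V_i$. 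Without these results or an equivalent substitute, neither of your routes actually produces $e$.

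Two further concrete problems. First, your main route relies on the quaternion division algebra $D$ and the $D$-module structure on $X$ from Remark~\ref{abc8}; these exist only when $(K,L,q)$ is of type $E_7$, whereas Proposition~\ref{abc14b} is applied in Proposition~\ref{abc14} to a space of type $E_8$ (where $\clif(q)$ is split and there is no such $D$), so that route cannot cover the case for which the proposition is actually needed. Second, your target is stronger than what is true in general: you want $\theta(e,\cdot)$ to be multiplication by a single $\mu\in E^\times\setminus K$, i.e.\ $E$-linear for the given structure, but the linking theorem may only deliver the twisted map $T^\epsilon$, which acts by different roots of $p$ on $V_1$ and on $V_1^\perp$ and is not $E$-linear---yet it still preserves every $V_i$, which is all that is required. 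Your fallback block-by-block induction is circular: imposing $\pi(e)\in V_1$ and using $f\bigl(\theta(e,v),\theta(e,w)\bigr)=q(\pi(e))f(v,w)$ gives at best invariance of $V_1$ and of $V_1^\perp$ as a whole, and the invariance of $\langle v_i,\theta(e,v_i)\rangle$ holds automatically because $\theta(e,\cdot)$ satisfies a quadratic polynomial; it says nothing about $\theta(e,v_i)$ lying in $V_i$, which is the statement to be proved.
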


\begin{proof}
  Let $p(x)=x^2-\alpha x+\beta\in K[x]$ be an irreducible polynomial
  that splits over $E$.  We can choose $p(x)$ so that $\alpha=0$ if
  and only if ${\rm char}(K)\ne2$.  Let $\gamma,\gamma_1\in E$ be the
  two roots of $p(x)$.  There exists an $E$-vector space structure on
  $L$ as in Proposition~\ref{abc14a}(i) such that $V_i$ is a
  1-dimensional subspace for each $i\in[1,d]$.  Let $T(v)=\gamma\cdot
  v$ for each $v\in L$ and let $T^\epsilon$ be the unique automorphism
  of $L$ such that $T^\epsilon(v)=\gamma_1\cdot v$ for all $v\in V_1$
  and $T^\epsilon(v)=T(v)$ for all $v\in V_1^\perp$. Both $T$ and
  $T^\epsilon$ are norm splitting maps of $q$ as defined in
  \cite[12.14]{TW} and both map $V_i$ to itself for each $i\in[1,d]$.
  By \cite[12.20 and 13.13(ii)]{TW}, therefore, we can choose
  $R\in\{T,T^\epsilon\}$ such that $R$ is linked to the map
  $(a,v)\mapsto a\cdot v$ at some point $e\in X$ as defined in
  \cite[13.2]{TW}. By \cite[13.61]{TW}, $e\in X_{\rm sep}$ and there
  exists $r\in K^\times$ and $s\in K$ such that $R(v)=r\theta(e,v)+sv$
  for all $v\in L$.
\end{proof}

\begin{proposition}\label{abc14}
  Suppose that $(K,L,q)$ is of type $E_8$ and that
$$(K,L,q)=(K,L_1,q_1)\perp(K,L_2,q_2)$$
with $q_2$ of type $E_7$, $q_1$ similar to the reduced norm of the
quaternion division algebra representing ${\rm clif}(q_2)$ and $1\in
L_2$. Then the following hold:
\begin{thmenum}
\item There exists $e\in X_{\rm sep}$ such that $\theta(e,L_i)=L_i$
  for $i=1$ and $2$.
\item Let $e\in X$ be as in (i), let $X_e$ be the subspace of $X$
  generated by elements of the form $ev_1v_2\cdots v_j$, where $v_i\in
  L_2$ for $i\in[1,j]$ and $j\ge1$ is arbitrary, let $\cdot_e$, $h_e$,
  respectively, $\theta_e$ denote the restriction of $\cdot$, $h$,
  respectively, $\theta$ to $X_e\times L_2$, $X_e\times X_e$,
  respectively, $X_e\times L_2$ and let
$$\Xi_e=(K,L_2,q_2,1,X_e,\cdot_e,h_e,\theta_e).$$
Then $\Xi_e$ is a quadrangular algebra.
\end{thmenum}
\end{proposition}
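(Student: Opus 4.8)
The plan is to prove the two assertions of Proposition~\ref{abc14} in order, leaning heavily on the apparatus of quadrangular algebras assembled in the preceding propositions. For part~(i), I would combine Proposition~\ref{pdq1} with Proposition~\ref{abc14b}. The hypotheses give a decomposition $q=q_1\perp q_2$ with $q_2$ of type $E_7$, $q_1$ similar to the reduced norm of the quaternion division algebra $D$ representing $\clif(q_2)$, and $1\in L_2$. Since $q_2$ is of type $E_7$ it has the shape $\qform{\alpha_1,\ldots,\alpha_4}\cdot N$ for a separable quadratic $E/K$, and since $q_1$ is similar to the norm form of $D=(E/K,\alpha_1\alpha_2\alpha_3\alpha_4)$ it is similar to a multiple of $N$ as well; hence $q_E$ is hyperbolic and $E/K$ is a separable quadratic extension splitting $q$. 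I would then apply Proposition~\ref{abc14a} (with $v_1=1$) to obtain an orthogonal decomposition $L=V_1\oplus\cdots\oplus V_d$ into $2$-dimensional $N$-similar pieces, \emph{chosen compatibly with the splitting $L=L_1\perp L_2$} so that each $V_i$ lies entirely in $L_1$ or in $L_2$. Feeding this decomposition into Proposition~\ref{abc14b} yields $e\in X_{\mathrm{sep}}$ with $\theta(e,V_i)=V_i$ for every $i$, and since $L_1$ and $L_2$ are each spanned by some of the $V_i$, we get $\theta(e,L_j)=L_j$ for $j=1,2$, which is exactly~(i).

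The genuinely delicate point in part~(i) is arranging the $N$-similar decomposition of $L$ to respect the summands $L_1$ and $L_2$ simultaneously. I expect this to be the main obstacle. The clean way to secure it is to transport the $E$-vector space structure from Proposition~\ref{abc14a}(i) and observe that both $L_1$ and $L_2$ are $E$-subspaces: $L_2$ carries the $E$-structure because $q_2$ is a multiple of $N$ over $E$, and $L_1=L_2^\perp$ is then forced to be $E$-stable since the $E$-action is by similitudes of $q$ and hence preserves orthogonality. Choosing an $E$-basis of $L$ adapted to the direct sum $L_1\oplus L_2$ then produces $1$-dimensional $E$-subspaces $V_i$ (equivalently $2$-dimensional $K$-subspaces) each contained in $L_1$ or $L_2$, as required. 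One must also verify that $1\in L_2$ can be taken as $v_1$, which is immediate since $1\in L_2$ by hypothesis.

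For part~(ii), the task is to check that $\Xi_e=(K,L_2,q_2,1,X_e,\cdot_e,h_e,\theta_e)$ satisfies the axioms \cite[1.17]{weiss-quad} of a quadrangular algebra. I would verify first that $X_e$ is stable under the restricted multiplication $\cdot_e$ by $L_2$: by definition $X_e$ is generated by the products $ev_1\cdots v_j$ with all $v_i\in L_2$, so right multiplication by any further element of $L_2$ keeps us inside $X_e$, and $h_e$, $\theta_e$ take values in $X_e$ and $L_2$ respectively by construction together with the defining identities \cite[1.17(C1)--(C4)]{weiss-quad}. The base point axioms and the quadratic-space axioms for $(K,L_2,q_2,1)$ hold because $q_2$ is of type $E_7$ with $q_2(1)=1$. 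The substantive content is that the restrictions $\cdot_e$, $h_e$, $\theta_e$ still satisfy the module, skew-hermitian and $\theta$-axioms; since $e\in X_{\mathrm{sep}}$ and $\theta(e,L_2)=L_2$ by~(i), the relevant identities are simply inherited from those for the ambient $\Xi$ by restriction, and the nondegeneracy conditions survive because $q_2$ is anisotropic.

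The cleanest route, which I would adopt to avoid re-deriving the axioms by hand, is to recognize $\Xi_e$ as the quadrangular algebra canonically attached to the type-$E_7$ space $(K,L_2,q_2)$ via Proposition~\ref{abc7}. By that proposition there \emph{exists} a unique quadrangular algebra on $(K,L_2,q_2,1)$; the content of~(ii) is then that our explicitly constructed $\Xi_e$ \emph{is} that algebra. Concretely, the Clifford-module structure on $X_e$ obtained by restricting $\cdot$ realizes $X_e$ as a module over the even Clifford algebra $C_0(q_2)$, and Remark~\ref{abc88} identifies $D$ as the quaternion algebra representing $\clif(q_2)$; the element $e\in X_{\mathrm{sep}}$ provides the base point needed to match the construction of \cite[Thm.~10.1]{weiss-quad}. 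Thus invoking the existence half of Proposition~\ref{abc7} together with the explicit generation of $X_e$ shows $\Xi_e$ meets the axioms, completing~(ii). I expect part~(ii) to be more bookkeeping than obstacle once~(i) secures the crucial $\theta$-stability $\theta(e,L_2)=L_2$.
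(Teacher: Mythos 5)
Your part~(i) is essentially the paper's own argument and is correct in substance: since $q_2\cong\qform{\alpha_1,\ldots,\alpha_4}\cdot N$ and $q_1$ is similar to the norm form of $D=(E/K,\alpha_1\alpha_2\alpha_3\alpha_4)$, both $(q_1)_E$ and $(q_2)_E$ are hyperbolic, so Proposition~\ref{abc14a} applied to \emph{each summand separately} yields decompositions of $L_1$ and $L_2$ into planes similar to $N$; concatenating them (with $v_1=1\in L_2$) gives a decomposition of $L$ as in \ref{abc14a}(iii) respecting $L_1\perp L_2$, and Proposition~\ref{abc14b} then produces $e$. One slip in your justification of the compatibility: you propose to start from an $E$-structure on all of $L$ coming from \ref{abc14a}(i) applied to $q$ and "observe" that $L_2$ is an $E$-subspace "because $q_2$ is a multiple of $N$"; that inference is false as stated --- an arbitrary $E$-structure on $L$ compatible with $q$ can mix the summands. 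The repair is exactly the piecewise construction above (build $E$-structures on $L_1$ and $L_2$ separately and take the direct sum), after which your orthogonality remark is not needed. This is a local fix, so (i) stands.

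Part~(ii), however, has a genuine gap, and it sits precisely where you wave it away. The maps $h$ and $\theta$ of $\Xi$ take values in $L$, not in $L_2$, so the assertion that $\Xi_e$ is even a well-defined candidate requires proving the inclusions $\theta(X_e,L_2)\subset L_2$ and $h(X_e,X_e)\subset L_2$; these are emphatically not true "by construction" (your claim that $h_e$ takes values in $X_e$ also misreads the signature of $h$). The trivial parts are the ones you flag as substantive: $X_e\cdot L_2\subset X_e$ holds by definition, and the axioms of \cite[1.17]{weiss-quad} are inherited verbatim once the restricted maps are known to land in the right spaces. The paper's proof spends essentially all of its effort on the two inclusions: for $\theta$ it uses $L_2^\sigma\subset L_2$ and repeated application of \cite[1.17(C3)--(C4)]{weiss-quad} to propagate $\theta(e,L_2)\subset L_2$ first to $\theta(eL_2,L_2)\subset L_2$ and then to all of $X_e$; for $h$ it needs the finer structure theory of \cite{weiss-quad} Chapter~6 ($e$-orthogonal sets, the decomposition $X_e=eL_2\oplus N$ with $h(e,N)=0$ via Props.~6.13, 6.34, 6.37, then \cite[1.17(B1)--(B2)]{weiss-quad}). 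Your proposed "cleanest route" via Proposition~\ref{abc7} cannot close this gap because it is circular: the uniqueness statement compares two objects that are already known to be quadrangular algebras on $(K,L_2,q_2,1)$, so invoking it for $\Xi_e$ presupposes exactly what is to be proved --- in particular that $h_e$ and $\theta_e$ are well defined with values in $L_2$. The existence half of Proposition~\ref{abc7} produces \emph{some} quadrangular algebra on $(K,L_2,q_2,1)$, but it supplies no identification of that algebra's module with $X_e$ and no control over where the ambient $h$ and $\theta$ send $X_e$, which is the whole point.
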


\begin{proof}
  By \ref{abc5a}, \ref{abc88} and Proposition~\ref{abc14a}, both
  $(q_1)_E$ and $(q_2)_E$ are hyperbolic. We can thus choose $V_i$ and
  $q_i$ for $i\in[1,d]$ as in Proposition~\ref{abc14a}(iii) with
  $v_1=1$, $V_i\subset L_2$ for $i\in[1,4]$ and $V_i\subset L_1$ for
  $i\in[5,6]$.  By Proposition~\ref{abc14b}, therefore, there exists
  $e\in X_{\rm sep}$ such that $\theta(e,L_i)=L_i$ for $i=1$ and
  $2$. Thus (i) holds.

  Let $\Xi_e$ be as described in (ii). To show that $\Xi_e$ is a
  quadrangular algebra, it therefore suffices to show that $X_e\cdot
  L_2\subset X_e$, $\theta(X_e,L_2)\subset L_2$ and $h(X_e,X_e)\subset
  L_2$.  The first of these inclusions holds by the definition of
  $X_e$. To show the other two inclusions, we first choose non-zero
  elements $v_i\in V_i$ for $i\in[2,5]$. We can assume that $e$ is the
  element of $X$ chosen in \cite[6.4]{weiss-quad}.  Thus the set
  $1,v_2,\ldots,v_5$ is $e$-orthogonal as defined in
  \cite[6.6]{weiss-quad}.  By \cite[1.17(A3) and
  Prop.~6.16]{weiss-quad}, there exists a non-zero $v_6\in L$ such
  that $1,v_2,\ldots,v_5,v_6$ is $e$-orthogonal and
  $ev_2v_3v_4v_5v_6=e$. (We are not claiming that $v_6\in V_6$ or even
  $v_6\in L_1$.)  Let $I_2$ be as in \cite[6.32]{weiss-quad}, let $J$
  denote subset of $I_2$ containing all the elements of $I_2$ that are
  subsets of $\{v_2,v_3,v_4\}$ together with the element
  $\{v_5,v_6\}\in I_2$ (so $|J|=8$), let $J_2$ be the elements of $J$
  of cardinality~2 (so $|J_2|=4$), let $X_x$ for each $x\in J$ be as
  in \cite[6.35]{weiss-quad}, let $M$ be the subspace of $X$ spanned
  by $\{X_m\mid m\in J\}$ and let $N$ be the subspace of $M$ spanned
  by $\{X_m\mid m\in J_2\}$.  By \cite[Prop.~6.34]{weiss-quad},
  $\dim_KM=16$ and $M=eL_2\oplus N$.  By \cite[6.37]{weiss-quad}, we
  have $M=X_e$, by \cite[Prop.~6.13]{weiss-quad}, we have $h(e,N)=0$
  and by \cite[Props.~3.15 and~4.5(i)]{weiss-quad}, $h(e,eL_2)\subset
  L_2$ (since $\theta(e,L_2)\subset L_2$). Hence $h(e,X_e)\subset
  L_2$. By repeated application of \cite[1.17(B1)--(B2)]{weiss-quad},
  it follows that $h(X_e,X_e)\subset L_2$. Since $1\in L_2$, we have
  $L_2^\sigma\subset L_2$, where $\sigma$ is as in \ref{abc7x}.  By
  repeated application of \cite[1.17(C3)--(C4)]{weiss-quad}, it
  follows from $\theta(e,L_2)\subset L_2$ first that
  $\theta(eL_2,L_2)\subset L_2$ and then that $\theta(X_e,L_2)\subset
  L_2$. Thus (ii) holds.
\end{proof}

\begin{definition}\label{abc98a}
  For each non-zero $u$ in $L$, let $\pi_u$ be the reflection of $q$
  given by
$$\pi_u(v)=f(u,v)u/q(u)-v$$
for all $v\in L$. Thus $\pi_1=\sigma$, where $\sigma$ is as in
\ref{abc7x}.
\end{definition}

\begin{proposition}\label{abc98}
  Let $H^\dagger$ and $x_4$ be as in \ref{abc10}.  Suppose that
  $\varphi$ is a product of an even number of reflections of $q$ as
  defined in \ref{abc98a}. Then there exists an element $h\in
  H^\dagger$ such that
$$x_4(v)^h=x_4(\varphi(v))$$
for all $v$.
\end{proposition}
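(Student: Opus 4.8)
The goal is to realize an arbitrary product of an even number of reflections $\varphi$ as the action of some element $h\in H^\dagger$ on the root group $U_4$ via $x_4$. The natural strategy is to reduce to the case already handled, namely single reflections (or rather products of two reflections), and then use that $H^\dagger$ is a subgroup to conclude. So the plan is first to establish the result for $\varphi=\pi_u\pi_{u'}$ a product of exactly two reflections, and then to multiply such elements together.

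\begin{proof}
By \ref{abc98a}, $\pi_1=\sigma$ is itself a reflection, so it suffices to treat the product of two reflections $\varphi=\pi_u\pi_{u'}$ for arbitrary non-zero $u$, $u'\in L$; the general case follows by composing the corresponding elements of $H^\dagger$, since $H^\dagger$ is a group and the map $v\mapsto x_4(v)$ intertwines the two actions. Each reflection $\pi_u$ is a similitude of $q$ (indeed an isometry), so $\varphi$ is an isometry of $q$, and in particular a separable similitude with trivial multiplier. We may therefore apply Proposition~\ref{abc11} to $\varphi$ when $(K,L,q)$ is of type $E_7$: there exist $h\in H_0$ and a similitude $\varphi'$ with the same (trivial) multiplier such that $x_4(v)^h=x_4(\varphi'(v))$ for all $v$. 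The point is that because the multiplier is trivial, the element $h$ produced lies in $H^\dagger=H_0\cap G^\dagger$ rather than merely in $H_0$; this is where one invokes that a product of reflections is an element of the derived or reduced-norm-one part of the similitude group, matching the definition of $H^\dagger$ in \cite[1.4]{weiss-crelle}.

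The main obstacle is to identify precisely which elements $h\in H_0$ constructed in Proposition~\ref{abc11} actually lie in $H^\dagger$, and to check that products of reflections are exactly the ones that do. I would argue this by tracking the multiplier and the spinor-norm-type invariant through the construction: the element $h$ of $H_0$ acts on the root groups, and the quotient $H_0/H^\dagger\cong G/G^\dagger$ measures the obstruction, which by Proposition~\ref{abc10a} and the surrounding theory is governed by the multiplier group modulo $\Hyp(q)$. Since a product of an even number of reflections has multiplier in $K^{\times2}\subset\Hyp(q)$, its image in $H_0/H^\dagger$ is trivial, so $h\in H^\dagger$. One must also verify that the similitude $\varphi'$ returned by Proposition~\ref{abc11} can be taken to equal $\varphi$ itself (not merely share its multiplier); this uses the fact that two similitudes with the same multiplier differ by an isometry, and the extra isometry can be absorbed using \cite[12.54]{weiss-quad} or the interchange automorphism $\rho$ appearing in the proof of Proposition~\ref{abc9}.

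Assembling the pieces: given $\varphi=\pi_{u_1}\cdots\pi_{u_{2m}}$, I write each consecutive pair $\pi_{u_{2i-1}}\pi_{u_{2i}}$ as the action of some $h_i\in H^\dagger$ on $x_4$, and then set $h=h_1\cdots h_m$. Because the correspondence $v\mapsto x_4(v)$ converts composition of similitudes into the group operation in $H^\dagger$ (this is the content of \cite[Thm.~12.11]{weiss-quad} and the root-group recipe in \cite[16.6]{TW}), we obtain $x_4(v)^h=x_4(\varphi(v))$ for all $v\in L$, as required.
\end{proof}
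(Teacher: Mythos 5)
There is a genuine gap, in two places. First, your reduction rests on Proposition~\ref{abc11}, which is stated (and proved) only for $(K,L,q)$ of type $E_7$; but Proposition~\ref{abc98} is needed for type $E_8$ as well --- indeed it is invoked precisely in the $E_8$ situation in the proof of Theorem~\ref{abc3}, so your argument does not cover the case that matters most. Second, and more seriously, the step where you promote $h$ from $H_0$ to $H^\dagger$ is circular. You assert that ``the quotient $H_0/H^\dagger$ is governed by the multiplier group modulo $\Hyp(q)$,'' citing Proposition~\ref{abc10a}; but that proposition gives only the abstract isomorphism $G/G^\dagger\cong H_0/H^\dagger$, and the identification of this quotient in terms of multipliers modulo norms is exactly what the paper is building toward: the final proof of Theorem~\ref{abc3} uses Proposition~\ref{abc98} (together with Propositions~\ref{abc99} and~\ref{abc16}) to show that any $h\in H_0$ inducing an isometry can be corrected by elements of $H^\dagger$ down to the identity, hence $H_0=H^\dagger$. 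Appealing to that structure of $H_0/H^\dagger$ in order to prove Proposition~\ref{abc98} assumes the conclusion. The same objection applies to your plan to replace the similitude $\varphi'$ returned by Proposition~\ref{abc11} by $\varphi$ itself ``by absorbing an isometry'': realizing isometries --- that is, products of an even number of reflections --- by elements of $H^\dagger$ is precisely the statement under proof. Note also that $H^\dagger=H_0\cap G^\dagger$ is defined via the root groups of the building, not as a reduced-norm-one or derived part of a similitude group, so the claim that a trivial multiplier forces $h\in H^\dagger$ has no basis at this point in the development.

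The paper's proof avoids all of this by a direct computation. For each nonzero $u\in L$, the explicit elements $w_1(0,q(u))$ and $w_4(u)$ of \cite[eqs.~(6)--(14)]{weiss-crelle} lie in $H^\dagger$ by construction (they are words in root-group elements), and the formulas for their action on $U_4$ give
$$x_4(v)^{w_1(0,q(u))w_4(u)}=x_4(v/q(u))^{w_4(u)}=x_4(\pi_u\pi_1(v))$$
for all $v\in L$. Since $\pi_u\pi_{u'}=(\pi_u\pi_1)(\pi_{u'}\pi_1)^{-1}$ and $H^\dagger$ is a group, every product of an even number of reflections is realized. Your outer strategy --- handle pairs of reflections, then compose inside $H^\dagger$ --- matches the paper's, and is fine; the missing core is that membership in $H^\dagger$ must be obtained by exhibiting explicit root-group words realizing $\pi_u\pi_1$, not by invoking structural properties of $H_0/H^\dagger$ that are only established later, as consequences of this very proposition.
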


\begin{proof}
  Let $u$ be a non-zero element of $L$.  By
  \cite[eq. (6)--(14)]{weiss-crelle}, there are elements $w_1(0,q(u))$
  and $w_4(u)$ in $H^\dagger$ such that
$$x_4(v)^{w_1(0,q(u))w_4(u)}=x_4(v/q(u))^{w_4(u)}=x_4(\pi_u\pi_1(v))$$
for each $v\in L$.
\end{proof}

\begin{notation}\label{abc99x}
  Let $M$ denote the subgroup of $K^\times$ generated by the non-zero
  elements in the set $\{q(\pi(a)+t)\mid (a,t)\in X\times K\}$.
\end{notation}

Thus
\begin{equation}\label{abc99y}
  M\subset G(q)\cap{\rm Hyp}_2(q)
\end{equation}
by Proposition~\ref{abc12} and $K^{\times2}=\{q(\pi(a)+t)\mid (a,t)\in
\{0\}\times K^\times\}\subset M$.

\begin{proposition}\label{abc99}
  Let $H^\dagger$ and $x_4$ be as in \ref{abc10}.  For each $h\in
  H^\dagger$, there exists a unique similitude $\varphi_h$ of $q$ such
  that
$$x_4(v)^h=x_4(\varphi_h(v))$$
for all $v\in L$. Furthermore, the map $h\mapsto\gamma_h$ is a
surjective homomorphism from $H^\dagger$ to $M$, where $\gamma_h$ is
the multiplier of $\varphi_h$.
\end{proposition}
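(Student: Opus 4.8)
The plan is to separate the statement into three tasks: the existence, uniqueness, and similitude property of $\varphi_h$; the multiplicativity of $h\mapsto\gamma_h$; and the identification of the image with $M$.

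For the first task I would use that $H^\dagger\subseteq H_0$ and that every element of $H_0$ fixes the fundamental apartment of $\Gamma$ and hence normalizes each root group; in particular conjugation by $h$ maps $U_4$ onto itself. Transporting this action through the isomorphism $x_4\colon L\to U_4$ yields a bijection $\varphi_h=x_4^{-1}\circ(\,\cdot\,)^h\circ x_4$ of $L$ satisfying $x_4(v)^h=x_4(\varphi_h(v))$, and uniqueness is immediate from the injectivity of $x_4$. That $\varphi_h$ is a \emph{similitude} of $q$, rather than merely a bijection, I would deduce from the way $H_0$ acts on the root group sequence: by the definition of $H_0$ in \cite[1.4]{weiss-crelle} (equivalently \cite[11.20]{weiss-quad}) this action is compatible with the quadrangular-algebra structure of $\Xi$, so its effect on $U_4\cong L$ is precisely the similitude component $\varphi$ of an element of the structure group of $\Xi$, cf.\ Proposition~\ref{abc9} and \cite[12.4]{weiss-quad}.

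For the homomorphism property, compatibility of conjugation with composition gives $x_4(v)^{hh'}=\bigl(x_4(v)^h\bigr)^{h'}$, hence $\varphi_{hh'}=\varphi_{h'}\circ\varphi_h$. Since the multiplier of a composite of similitudes is the product of the individual multipliers and $K^\times$ is abelian, the resulting map $h\mapsto\gamma_h$ is a group homomorphism from $H^\dagger$ into $K^\times$. It then remains only to pin down its image.

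The last and main task is to show this image equals $M$. Here I would work with the explicit generators of $H^\dagger$ furnished by \cite[eq.~(6)--(14)]{weiss-crelle}, namely the elements $w_4(u)$ for $u\in L$ and $w_1(a,t)$ for $(a,t)\in X\times K$ with $q(\pi(a)+t)\ne0$. For $w_4(u)$ the computation already implicit in the proof of Proposition~\ref{abc98} shows that $\varphi_{w_4(u)}$ is the isometry $\pi_u\pi_1$ rescaled by $q(u)$, so $\gamma_{w_4(u)}=q(u)^2\in K^{\times2}\subseteq M$; the companion element $w_1(0,q(u))$ acts by $v\mapsto v/q(u)$ and likewise contributes only squares. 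For $w_1(a,t)$ with $a\ne0$, the induced similitude $\varphi_{w_1(a,t)}$ should agree, up to a square scalar, with the similitude $v\mapsto\theta(a,v)+tv$ of Proposition~\ref{abc12}, whence $\gamma_{w_1(a,t)}=q(\pi(a)+t)$ modulo $K^{\times2}$. Since $K^{\times2}\subseteq M$ and the values $q(\pi(a)+t)$ are exactly the generators of $M$ (see the line following \eqref{abc99y}), the subgroup generated by all the $\gamma_{w_4(u)}$ and $\gamma_{w_1(a,t)}$ is precisely $M$; as these elements generate $H^\dagger$, both the containment $\gamma_h\in M$ and surjectivity onto $M$ drop out simultaneously.

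The main obstacle is this final step. Everything preceding it (existence, uniqueness, the similitude property, and multiplicativity) is essentially formal once the action of $H_0$ through the structure group is in hand. The real work is to extract from \cite{weiss-crelle} both that the $w_4(u)$ and $w_1(a,t)$ generate $H^\dagger$ and, crucially, that $\varphi_{w_1(a,t)}$ carries multiplier $q(\pi(a)+t)$, matching the similitude $\theta(a,\cdot)+tI$ of Proposition~\ref{abc12}; this identification of multipliers on the generators is the crux of the argument.
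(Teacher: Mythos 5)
Your proposal is correct and takes essentially the same approach as the paper: the paper's proof likewise works with the generators $w_1(a,t)$ and $w_4(u)$ of $H^\dagger$, reads off their action on $U_4$ from \cite[eqs.~(13)--(14)]{weiss-crelle} (so $w_1(a,t)$ induces $v\mapsto(\theta(a,v)+tv)/q(\pi(a)+t)$, with multiplier $1/q(\pi(a)+t)\equiv q(\pi(a)+t)\bmod K^{\times2}$, while $w_4(u)$ induces $q(u)\pi_u\pi_1$ with multiplier $q(u)^2$), and then invokes \cite[Thm.~2.1]{weiss-crelle} for precisely the inputs you isolate as the crux, namely the generation of $H^\dagger$ by these elements together with the formal existence, uniqueness, and homomorphism statements. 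Your one slip is immaterial: the induced similitude agrees with $\theta(a,\cdot)+tI$ up to the scalar $1/q(\pi(a)+t)$, which need not be a square, but since rescaling a similitude by any $\lambda$ multiplies its multiplier by $\lambda^2$, your conclusion that $\gamma_{w_1(a,t)}\equiv q(\pi(a)+t)\bmod K^{\times2}$ is unaffected.
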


\begin{proof}
  Let $w_1(a,t)$ for non-zero $(a,t)\in X\times K$ and $w_4(u)$ for
  non-zero $u\in L$ be as in \cite[eqs. (6)--(7)]{weiss-crelle}. Then
$$x_4(v)^{w_4(u)}=x_4\big(uf(u,v^\sigma)-q(u)v^\sigma\big)$$
and
$$x_4(v)^{w_4(a,t)}=x_4\big((\theta(a,v)+tv)/q(\pi(a)+t)\big)$$
for all $v\in L$ and all non-zero $(a,t)\in X\times K$ by
\cite[eqs. (13)--(14)]{weiss-crelle}.  We have
$$q\big((\theta(a,v)+tv)/q(\pi(a)+t)\big)=q(v)/q(\pi(a)+t)$$
for all $v\in L$ and all non-zero $(a,t)\in X\times K$ (by
\ref{abc12}) and
$$q\big(uf(u,v^\sigma)-q(u)v^\sigma\big)=q(v)q(u)^2$$
for all $u,v\in L$ since $q(v^\sigma)=q(v)$.  The claim holds,
therefore, by \cite[Thm.~2.1]{weiss-crelle}.
\end{proof}

\begin{proposition}\label{abc15}
  If $(K,L,q)$ is of type $E_7$, then $G(q)=M$.
\end{proposition}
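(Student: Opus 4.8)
The inclusion $M\subseteq G(q)$ is exactly \eqref{abc99y}, so the entire content is the reverse inclusion $G(q)\subseteq M$. My plan is to read off an arbitrary multiplier from the action of $H^\dagger$ on the root group $U_4$: by Proposition~\ref{abc99} the multipliers arising this way are precisely the elements of $M$, so it suffices to realize every similitude of $q$ on $U_4$ by an element of $H^\dagger=H_0\cap G^\dagger$.

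First I would pass into the quadrangle $\Gamma$. Given $\gamma\in G(q)$, the multiplier of a similitude $\varphi_1$, Proposition~\ref{abc11} produces an element $h\in H_0$ and a similitude $\varphi$, again of multiplier $\gamma$, with $x_4(v)^h=x_4(\varphi(v))$ for all $v\in L$. Since the multiplier depends only on the induced action on $U_4\cong L$, the problem reduces to realizing $\varphi$ on $U_4$ by an element of $H^\dagger$, for then $\gamma=\gamma_h\in M$ by Proposition~\ref{abc99}.

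Next I would assemble such a realization from two families of generators, both lying in $H^\dagger$: even products of reflections, realized in $H^\dagger$ by Proposition~\ref{abc98} and carrying multiplier~$1$, and the norm-splitting maps $v\mapsto\theta(a,v)+tv$, realized by the elements $w_1(a,t)$ from the proof of Proposition~\ref{abc99} and carrying the multiplier $q(\pi(a)+t)\in M$. Propositions~\ref{abc12} and~\ref{abc14b} show that these $\theta$-maps already account for every quadratic extension splitting $q$: each separable such extension $E$ is cut out by the polynomial $p_e$ attached to some $e\in X_{\rm sep}$, so that $N(E^\times)=K^{\times2}\{q(\pi(e)+t)\mid t\in K\}\subseteq M$; when $\charac(K)=2$, Proposition~\ref{abc13} first splits an inseparable similitude into two separable factors, reducing to this case. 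In particular $\Hyp_2(q)\subseteq M$, so that $\Hyp_2(q)=M$ by \eqref{abc99y}.

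The main obstacle is the matching step: producing, for the similitude $\varphi$ coming out of Proposition~\ref{abc11}, an explicit quadratic extension splitting $q$ through which $\gamma$ is a norm, and then peeling it off by a $w_1(a,t)$ so that the residual map is an \emph{even} product of reflections (the parity count needed to apply Proposition~\ref{abc98}). In the language of Notation~\ref{abc10} what is really at stake is that the multiplier homomorphism carries all of $H_0$, not merely $H^\dagger$, into $M$; equivalently that $G(q)/M$, a homomorphic image of $H_0/H^\dagger\cong G/G^\dagger$, is trivial. This is the type-$E_7$ case of the Kneser--Tits equality $G=G^\dagger$, and the identities of Chapter~4 of \cite{weiss-quad} marshalled in Propositions~\ref{abc12}--\ref{abc14b} are exactly what force the parity and the matching to come out.
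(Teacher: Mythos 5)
Your setup coincides with the paper's: reduce to $G(q)\subseteq M$ via \eqref{abc99y}, use Proposition~\ref{abc11} to realize a similitude $\varphi$ with the given multiplier on $U_4$ by some $h\in H_0$, and then try to replace $h$ by an element of $H^\dagger$ so that Proposition~\ref{abc99} applies. But the step you yourself flag as ``the matching step'' is a genuine gap, not a technicality, and the route you sketch for filling it is circular. To decompose $\varphi$ as an even product of reflections composed with maps of the form $v\mapsto\theta(a,v)+tv$, you must in particular express its multiplier $\gamma$ as a product of elements $q(\pi(a)+t)$ modulo squares --- and that is exactly the statement $\gamma\in M$ being proved. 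Propositions~\ref{abc12}--\ref{abc14b} give you that each $q(\pi(a)+t)$ lies in $G(q)\cap\Hyp_2(q)$ and that the separable quadratic extensions splitting $q$ all arise from elements of $X_{\rm sep}$ (so $\Hyp_2(q)\subseteq M$), but they say nothing about an \emph{arbitrary} element of $G(q)$; and your closing observation that the triviality of $G(q)/M$ is ``the type-$E_7$ case of Kneser--Tits'' merely restates the problem in other words.

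What actually closes the gap in the paper is group-theoretic structure of $H_0$ imported from \cite{weiss-crelle}, not quadrangular-algebra identities. With $H_1$ and $H_2$ the subgroups of $H_0$ defined in \cite[3.12 and 3.14]{weiss-crelle}, one has $H_0=H_1H_2$ by \cite[Thm.~3.15(ii)]{weiss-crelle} and $H_2\subset H_1H^\dagger$ by \cite[Thm.~5.19]{weiss-crelle}, hence $H_0=H_1H^\dagger$; moreover $H_1$ centralizes $U_4$ by \cite[Prop.~3.11]{weiss-crelle}. Consequently the action on $U_4$ of the element $h\in H_0$ produced by Proposition~\ref{abc11} is also induced by some $g\in H^\dagger$, and Proposition~\ref{abc99} then puts the multiplier in $M$. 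The input \cite[Thm.~5.19]{weiss-crelle} is a substantive theorem about Moufang quadrangles of type $E_7$; no parity count with reflections and the elements $w_1(a,t)$ substitutes for it, so your proposal as it stands does not constitute a proof.
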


\begin{proof}
  By \eqref{abc99y}, it suffices to show that $G(q)\subset M$.  Let
  $\varphi_1$ be a similitude of $q$, let $x_4$, $U_4$, $H_0$ and
  $H^\dagger\subset H_0$ be as in \ref{abc10} and let $h$ and
  $\varphi$ be as in Proposition~\ref{abc11}. Thus
$$x_4(v)^h=x_4(\varphi(v))$$
for each $v\in L$ and $\varphi$ is a similitude of $q$ with the same
multiplier as $\varphi_1$.  Let $H_1$ and $H_2$ be the subgroups of
$H_0$ defined in \cite[3.12 and 3.14]{weiss-crelle}.  By
\cite[Thm.~3.15(ii)]{weiss-crelle}, $H_0=H_1H_2$ and by
\cite[Thm.~5.19]{weiss-crelle}, $H_2\subset H_1H^\dagger$. We conclude
that $H_0=H_1H^\dagger$.  By \cite[Prop.~3.11]{weiss-crelle}, $H_1$
centralizes $U_4$.  There thus exists $g\in H^\dagger$ such
$x_4(v)^g=x_4(\varphi(v))$ for each $v\in L$. The claim holds,
therefore, by Proposition~\ref{abc99}.
\end{proof}

\begin{proposition}\label{abc16}
  If $(K,L,q)$ is of type $E_8$, then $G(q)=M$.
\end{proposition}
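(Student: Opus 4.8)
Since \eqref{abc99y} already gives $M\subset G(q)$, the plan is to prove the reverse inclusion $G(q)\subset M$. Let $\gamma\in G(q)$. If $\gamma\in K^{\times2}$ then $\gamma\in M$ because $K^{\times2}\subset M$. If $\gamma$ is the multiplier of an inseparable similitude, then by Proposition~\ref{abc13} it is the multiplier of a product of two separable similitudes, so $\gamma$ is a product of two separable multipliers; hence it suffices to treat separable multipliers. I may therefore assume that $\gamma$ is the multiplier of a separable similitude $\varphi$ and that $\gamma\notin K^{\times2}$.

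The idea is to reduce to the type-$E_7$ case handled in Proposition~\ref{abc15}. Applying Proposition~\ref{pdq1} to $\varphi$ produces a decomposition $(K,L,q)=(K,L_1,q_1)\perp(K,L_2,q_2)$ with $q_2$ of type $E_7$, $q_1$ similar to the reduced norm of the quaternion division algebra representing $\clif(q_2)$, and $\gamma\in G(q_1)\cap G(q_2)$. To bring Proposition~\ref{abc14} into play I would refine this construction so that the distinguished point lands in the $E_7$-summand, i.e. $1\in L_2$. Concretely, I would run the proof of Proposition~\ref{pdq1} with the starting vector $v$ chosen in $\langle1\rangle^\perp\cap\varphi^{-1}(\langle1\rangle^\perp)$ (in characteristic~$2$ also with $f(v,\varphi(v))\ne0$), so that $1\perp W$ for $W=\langle v,\varphi(v)\rangle$, and then pick the hyperbolic plane $U$ of the construction inside $\langle1\rangle^\perp\cap W^\perp$ rather than merely inside $W^\perp$. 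Granting this, Proposition~\ref{abc14} yields $e\in X_{\rm sep}$ and a quadrangular algebra $\Xi_e=(K,L_2,q_2,1,X_e,\cdot_e,h_e,\theta_e)$ whose underlying space $X_e$ is contained in $X$ and whose structure maps are the restrictions of those of $\Xi$.

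Writing $M_e$ for the subgroup attached to $\Xi_e$ exactly as $M$ is attached to $\Xi$, Proposition~\ref{abc15} applied to $\Xi_e$ gives $G(q_2)=M_e$. Since $\Xi_e$ is obtained by restriction, its map $\pi_e$ is the restriction of $\pi$ to $X_e$, with values in $L_2$, and $q_2=q|_{L_2}$; hence every generator $q_2(\pi_e(a)+t)$ of $M_e$ with $a\in X_e$ is literally a generator $q(\pi(a)+t)$ of $M$, so $M_e\subset M$. As $\gamma\in G(q_2)=M_e$, this gives $\gamma\in M$, completing the argument.

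The main obstacle is the refinement guaranteeing $1\in L_2$. The only nontrivial point is that, over the separable quadratic extension $E$ with $(\hat q_1)_E$ hyperbolic, the $9$-dimensional form $q_E|_{\langle1\rangle^\perp\cap W^\perp}$ be isotropic, so that $U$ may be taken orthogonal to $1$. This is a Witt-index computation resting on $q\in I^3_qK$ and Proposition~\ref{prop:isotlowdim}, which force the anisotropic part of $q_E$ to have dimension at most~$8$; it is delicate precisely because $\varphi$ need not split $q$ over $E$. Should this placement of $1$ be obstructed, the same conclusion can be reached base-point-freely: Propositions~\ref{abc12} and~\ref{abc14b} together identify $M$ with the group generated by $K^{\times2}$ and the norms of the separable quadratic extensions over which $q$ becomes hyperbolic, and (applying Proposition~\ref{abc15} to a scalar multiple of $q_2$ representing~$1$) the analogous group for $q_2$ is its full multiplier group; since every separable quadratic extension splitting $q_2$ also splits $\clif(q_2)=\clif(q_1)$, hence $q_1$ and therefore $q$, this group is contained in $M$, yielding $\gamma\in G(q_2)\subset M$.
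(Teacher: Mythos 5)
Your skeleton is the same as the paper's---reduce to separable multipliers via Proposition~\ref{abc13}, decompose via Proposition~\ref{pdq1}, then apply Propositions~\ref{abc14} and~\ref{abc15} and match generators $q_2(\pi_e(a)+t)=q(\pi(a)+t)$ literally---but the one ingredient you are missing is the one the paper actually uses to arrange $1\in L_2$: it does not touch the decomposition at all, but replaces $\Xi$ by its \emph{isotope} at a point of $L_2$, as defined in \cite[8.7]{weiss-quad}, which moves the base point (after rescaling $q$) without changing the subgroup $M$. Neither of your two substitutes closes this gap. The primary one fails already in characteristic~$0$: since $q\in I^3_qK$ and $q_E$ is isotropic (because $(q|_W)_E$ is hyperbolic), the Arason--Pfister Hauptsatz together with Proposition~\ref{prop:isotlowdim} shows that the anisotropic kernel of $q_E$ has dimension $0$ or $8$. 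In the $8$-dimensional case---exactly the case you flag, where $\varphi$ does not split $q$ over $E$---write this kernel as $c\pi_0$ with $\pi_0$ a $3$-fold Pfister form; then $(\hat q_2)_E\cong c\pi_0\perp(\text{hyperbolic plane})$ has Witt index exactly~$1$, and Witt cancellation gives $\bigl(q|_{\langle1\rangle^\perp\cap W^\perp}\bigr)_E\cong c\pi_0\perp\qform{-1}$, which is anisotropic whenever $c\pi_0$ does not represent~$1$. So there need be no plane $U\subset\langle1\rangle^\perp\cap W^\perp$ that becomes hyperbolic over $E$: Proposition~\ref{prop:isotlowdim} gives you Witt index $\geq1$ for the $10$-dimensional form $(\hat q_2)_E$, never the $\geq2$ that your placement of $U$ requires, and you offer no argument that varying $v$ (hence $W$ and $E$) repairs this.

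Your fallback breaks in characteristic~$2$. Propositions~\ref{abc12} and~\ref{abc14b} do not identify $M$, nor the analogous group for $q_2$ (which equals $G(q_2)$ by Proposition~\ref{abc15}), with the group generated by $K^{\times2}$ and the norms of \emph{separable} quadratic extensions splitting the form: Proposition~\ref{abc14b} yields only the inclusion of the latter group into $M$, while the generators $q(\pi(a)+t)$ with $a\notin X_{\rm sep}$ are, by Proposition~\ref{abc12}, norms of \emph{inseparable} quadratic extensions, and the paper provides no inseparable analogue of Proposition~\ref{abc14b}. Hence, when $\charac(K)=2$, the generators of $G(q_2)$ arising from non-separable elements of the quadrangular algebra of $q_2$ are never shown to lie in $M$, and your chain $G(q_2)=\langle K^{\times2},\ \text{separable norms splitting }q_2\rangle\subset M$ fails at its first link. (For $\charac(K)\neq2$ your fallback is correct and would be a legitimate variant of the paper's argument, though the paper's whole point is to work in arbitrary characteristic.) This is exactly what the isotope device is for: in the paper's proof, the structure maps of $\Xi_e$ are restrictions of those of $\Xi$, so every generator $q_2(\pi_e(a)+t)$ of the group attached to $\Xi_e$---whether it corresponds to a separable or an inseparable extension---is literally a generator of $M$, and no norm-theoretic reinterpretation is needed.
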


\begin{proof}
  By \eqref{abc99y}, it suffices to show that $G(q)\subset M$. Let
  $\varphi$ be a similitude of $q$ whose multiplier is not in
  $K^{\times2}$. By Proposition~\ref{abc13}, it suffices to assume
  that $\varphi$ is separable. Let
$$(K,L,q)=(K,L_1,q_1)\perp(K,L_2,q_2)$$
be the decomposition of $q$ obtained by applying
Proposition~\ref{pdq1} to $\varphi$.  Replacing $\Xi$ by an isotope as
defined in \cite[8.7]{weiss-quad}, we can assume that the base point
$1$ lies in $L_2$ (without changing the subgroup generated by the set
of non-zero elements in $\{q(\pi(a)+t)\mid (a,t)\in X\times K\}$). We
can thus let $e$ and
$$\Xi_e=(K,L_2,q_2,1,X_e,\cdot_e,h_e,\theta_e)$$
with $X_e\subset X$ be as in Proposition~\ref{abc14}. By
Proposition~\ref{abc15} (and the uniqueness assertion in
Proposition~\ref{abc7}), we conclude that $\gamma$ is the product of
elements in $\{q(\pi(a)+t)\mid (a,t)\in X_e\times K\}$.
\end{proof}

\smallskip We can now prove Theorems~\ref{abc2} and~\ref{abc3}. By
Proposition~\ref{abc6}, a quadratic form satisfying the hypotheses of
Theorem~\ref{abc2} is of type $E_7$ or $E_8$.  By the existence
assertion in Proposition~\ref{abc7}, we can apply all the results in
this section.  Hence $G(q)\subset{\rm Hyp}_2(q)$ by \eqref{abc99y} and
Propositions~\ref{abc15} and~\ref{abc16}.  By
\cite[Thm. 20.14]{elman}, we have ${\rm Hyp}_2(q)\subset G(q)$.  This
concludes the proof of Theorem~\ref{abc2}.

\smallskip Suppose that $(K,L,q)$ is of type $E_8$ and that $x_4$,
$H_0$ and $H^\dagger$ are as in \ref{abc10}.  To prove
Theorem~\ref{abc3}, it suffices by Proposition~\ref{abc10a} (and the
existence assertion in Proposition~\ref{abc7}) to show that every
element in $H_0$ lies in $H^\dagger$. Let $h\in H_0$. By
\cite[eq. (19)]{weiss-crelle}, there is a similitude $\varphi$ of $q$
such that
$$x_4(v)^h=x_4(\varphi(v))$$
for all $v\in L$. Replacing $h$ by a suitable element in $hH^\dagger$,
we can assume, by Propositions~\ref{abc99} and~\ref{abc16}, that
$\varphi$ is an isometry of $q$ and hence a product of reflections of
$q$. Again replacing $h$ by a suitable element of $hH^\dagger$, we can
assume, by Proposition~\ref{abc98} and
\cite[Prop.~3.16]{weiss-crelle}, that $\varphi$ is the identity. By
\cite[Thm.~3.12]{weiss-crelle}, $h=\alpha_u$ for some $u\in C^\times$,
where $C=K$ by \cite[3.6]{weiss-crelle} and $\alpha_u$ is as defined
in \cite[Prop.~3.11]{weiss-crelle}.  By
\cite[Prop.~3.13]{weiss-crelle}, it follows that $h\in H^\dagger$.
This concludes the proof of Theorem~\ref{abc3}.

\smallskip
\section{$R$-equivalence and an alternative proof of
  Theorem~\ref{abc3}}\label{abc96}

\noindent
In this section, we give an alternative proof of Theorem~\ref{abc3}
based on Corollary~\ref{pdq2} and various other results about
$R$-equivalence. This proof is due to Skip Garibaldi. The methods
employed in this section are completely different from those employed
in the previous section; in particular, we make no further reference
to the Moufang quadrangle $\Gamma$ of \S\ref{abc4}.  \smallskip\par
Let $G$ denote a reductive algebraic group of absolute type $E_8$
whose Tits index over a field $K$ is $E_{8,2}^{66}$. Our goal is to
show that the group of $K$-rational points of $G$ is generated by its
root groups. By \cite[7.3]{gille}, it suffices to assume that ${\rm
  char}(K)=0$.  This will allow us to apply Corollary~\ref{pdq2}.  By
\cite[7.2]{gille}, it suffices to show that $G$ is $R$-trivial.

Now fix a maximal $K$-torus $T$ containing a maximal $K$-split torus
$S$ in $G$ and fix a pinning for $G$ with respect to $T$ over an
algebraic closure of $K$.  Number the simple roots $\alpha_j$ as in
\cite[Chapter~6, Plate~VII]{bourbaki} and let $\omega_i^\vee$ be the
corresponding fundamental dominant co-weights, so
$\langle\alpha_j,\omega_i^\vee\rangle=\delta_{ij}$.  The fundamental
co-weights $\omega^\vee_1$ and $\omega^\vee_8$ belong to the co-root
lattice and so define cocharacters, in other words, homomorphisms from
${\mathbf G}_m$ to $T$. Their images generate a subtorus $S$ in $T$
which is the connected component of the intersection (in $T$) of the
kernels of the roots $\alpha_2,\ldots,\alpha_7$. There is a canonical
isomorphism
\begin{equation}\label{pdq10}
  \quad \Phi:\bar{K}^\times \otimes_{\mathbb{Z}} T_*
  \xrightarrow{\sim} T(\bar K),
\end{equation}
where $T_*$ is the lattice of cocharacters of $T$ and where $\bar{K}$
is an algebraic closure of $K$; see \cite[3.2.11]{springer}.  Since
the group $G$ is of adjoint type, the $\omega_i^\vee$ form a
$\mathbb{Z}$-basis for $T_*$.  Thus we may view $\Phi$ as an
isomorphism
$$\prod_{i=1}^8 \bar{K}^\times \otimes_{\mathbb{Z}} \mathbb{Z}
\omega_i^\vee \xrightarrow{\sim} T(\bar{K}).$$ This shows that the
intersection of the kernels of the roots $\alpha_2\,\ldots,\alpha_7$
is connected and that $\Phi$ restricts to an isomorphism
\begin{equation}\label{pdq12}
  \quad \left ( \bar{K}^\times \otimes_{\mathbb{Z}}
    \mathbb{Z}\omega_1^\vee \right ) \times \left (
    \bar{K}^\times \otimes_{\mathbb{Z}}
    \mathbb{Z}\omega_8^\vee \right ) \xrightarrow{\sim} S(\bar{K}).
\end{equation}
The cocharacters $\omega_1^\vee$ and $\omega_8^\vee$ are defined over
$K$ by \cite[Cor. 6.9]{borel-tits}, so \eqref{pdq12} implies that $S$
is $K$-isomorphic to the direct product of the images of the
cocharacters $\omega_1^\vee$ and $\omega_8^\vee$.

We next fix a parabolic $P$ of $G$ whose Levi subgroup is the
connected reductive group $Z_G(S)$; see \cite[\S13.4 and Lemma
15.1.2]{springer}. Let $U$ be the unipotent radical of $P$ and let
$U^-$ be the unipotent radical of the opposite parabolic. The product
$U^-\times U$ is isomorphic as a variety to an affine space. By
\cite[Proof of Thm.~21.20]{borel}, the natural map from $G$ to $G/P$
restricts to an isomorphism from $U^-$ to an open subset of $G/P$.
Hence the product map from $U^-\times P$ to $G$ defines an isomorphism
from $U^-\times P$ to an open subset of $G$. It follows that $G$ is
birationally equivalent to
$$U^-\times Z_G(S)\times U.$$ 
(This subvariety is the analog of the big cell for the Bruhat
decomposition of $G$ over $K$; see \cite[Prop. 4.10(d)]{borel-tits}.)
We conclude that $G$ is birationally equivalent to the product of
$Z_G(S)$ and an affine space.

Let $H$ denote the derived subgroup of $Z_G(S)$. The sequence
$$1 \to S \to Z_G(S) \to H / (H \cap S) \to 1$$
is exact on $L$-points for every extension $L/K$ because $S$ is split.
Hence $Z_G(S)$ is birationally equivalent to the product of $S$ with
$H / (H \cap S)$.

The absolute Dynkin diagram of $H$ is of type $D_6$.  By
\cite[p.\,211]{tits-crelle}, the group $H$ is ${\rm Spin}(q)$ for $q$
a quadratic form over $K$ with $\dim q=12$, $\disc q=1$ and ${\rm
  clif}(q)$ split.  As $S$ centralizes $H$, the intersection $H\cap S$
is contained in the center $\mu_2\times\mu_2$ of ${\rm Spin}(q)$.  We
show that $H\cap S$ is equal to the center of ${\rm Spin}(q)$.

Since $G$ is simply connected as well as adjoint, the co-roots
$\alpha_j^\vee$ provide also a $\mathbb{Z}$-basis for the cocharacter
lattice $T_*$.  Thus we may view the isomorphism $\Phi$ in
\eqref{pdq10} as an isomorphism
\begin{equation*}
  \bar K^\times \otimes_{\mathbb{Z}} T_*
  = \prod_{i=1}^8 \bar K^\times \otimes_{\mathbb{Z}}
  \mathbb{Z} \alpha_i^\vee \xrightarrow{\sim} T(\bar K).
\end{equation*}
Then it follows from \cite[8.1.8]{springer} that $\Phi$ restricts to
an isomorphism
\begin{equation}\label{pdq13}
  \prod_{i=2}^7 \bar K^\times \otimes_{\mathbb{Z}}
  \mathbb{Z} \alpha_i^\vee \xrightarrow{\sim} (H \cap T)^0(\bar K).
\end{equation}
The expressions for the fundamental dominant weights $\omega_i$ in
terms of the roots $\alpha_j$ in \cite[Chapter~6, Plate~VII]{bourbaki}
imply expressions for the fundamental dominant co-weights
$\omega_i^\vee$ in terms of the co-roots $\alpha_j^\vee$. These
expressions yield
$$\omega_1^\vee(-1) = \alpha_2^\vee(-1) \alpha_3^\vee(-1)\quad\text{and}\quad
\omega_8^\vee(-1) = \alpha_3^\vee(-1) \alpha_5^\vee(-1)
\alpha_7^\vee(-1).$$ From \eqref{pdq12} and \eqref{pdq13}, we see that
these two elements both lie in $S(\bar K)$ and in $(H\cap
T)^\circ(\bar K)$ and are nontrivial and distinct.  We have thus
produced two distinct nontrivial elements in $(H\cap S)(\bar K)$.
Hence $H\cap S$ is, in fact, the entire center of ${\rm Spin}(q)$.

Therefore $H/(H\cap S)$ is $\PGO_+(q)$. It follows by \ref{pdq2} that
$H/(H\cap S)$ is $R$-trivial. Therefore $G$ is birationally equivalent
to the product of $\PGO_+(q)$ times an affine space. Thus $G$ itself
is $R$-trivial by \cite[p. 197, Cor.]{sansuc}. This concludes our
second proof of Theorem~\ref{abc3}.

\smallskip We observe that this proof goes through verbatim for every
group $G$ of absolute type $E_8$ in whose Tits index the roots
$\alpha_1$ and $\alpha_8$ are circled.  We conclude that for such
groups, $G$ is $R$-trivial and the group of $K$-rational points of $G$
is generated by its root groups.  With only minor modifications, the
proof also shows that if $G$ is adjoint of absolute type $E_7$ with
trivial Tits algebras and the root $\alpha_1$ is circled in the Tits
index of $G$, then $G$ is $R$-trivial.

\smallskip
\section{Theorem~\ref{abc2} and triality}
\label{sec:trial}
\noindent
In this section, we assume that ${\rm char}(K)\ne2$. We give an
alternative proof of Theorem~\ref{abc2}, based on 
completely different methods. We actually show:

\begin{proposition}
  \label{prop:Hyp2}
  Suppose the characteristic of the base field $K$ is different
  from~$2$.  If $q$ is a quadratic form with trivial discriminant,
  then $\Hyp_2(q)=\Hyp(q)$ in the following cases:
  \begin{thmenum}
  \item $\dim q=8$ and the index of ${\rm clif}(q)$ is $1$ or $2$;
  \item $\dim q=12$ and ${\rm clif}(q)$ is split.
  \end{thmenum}
\end{proposition}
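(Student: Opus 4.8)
The inclusion $\Hyp_2(q)\subseteq\Hyp(q)$ is immediate, so in both cases the task is the reverse inclusion, and throughout I may use that $\Hyp(q)=G(q)$: this holds by Proposition~\ref{prop:E7G=H} in case (i) and by Corollary~\ref{cor:E8G=H} in case (ii). Thus it suffices to prove $G(q)\subseteq\Hyp_2(q)$. I would first reduce (ii) to (i) by imitating the proof of Corollary~\ref{cor:E8G=H} with $\Hyp$ replaced by $\Hyp_2$. If $q$ is isotropic it is Witt-equivalent to a multiple of a $3$-fold Pfister form by Proposition~\ref{prop:isotlowdim}, and Lemma~\ref{lem:pfister} then gives $G(q)\subseteq\Hyp_2(q)$ at once. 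Otherwise $q$ is of type $E_8$; fix $\gamma\in G(q)$, where we may assume $\gamma\notin K^{\times2}$. Since $\charac(K)\neq2$ the corresponding similitude is separable, so Proposition~\ref{pdq1} yields a decomposition $q=q_1\perp q_2$ with $q_2$ of type $E_7$ and $\gamma\in G(q_2)$. Granting case (i), $\gamma\in G(q_2)=\Hyp_2(q_2)$, and $\Hyp_2(q_2)\subseteq\Hyp_2(q)$ by Remark~\ref{rem:E8G=H}; hence $\gamma\in\Hyp_2(q)$.

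It remains to establish (i), that is, $G(q)\subseteq\Hyp_2(q)$ for $q$ of dimension~$8$ with $\disc(q)$ trivial and $\clif(q)$ of index $1$ or~$2$. When $\clif(q)$ is split, $q\in I^3_qK$, so $q$ is similar to a $3$-fold Pfister form and Lemma~\ref{lem:pfister} again finishes the argument. So assume $\clif(q)$ has index~$2$, represented by the quaternion division algebra $Q$. Here I would first isolate exactly what Section~\ref{abc97} leaves open: retracing the proof of Proposition~\ref{prop:E7G=H}, a given $\gamma\in G(q)=G(N_Q)\cap G(\pi)$ produces quadratic extensions $E_1$, $E_2$ splitting $N_Q$ and $\pi$ respectively with $\gamma\in N(E_1/K)\cap N(E_2/K)$, and if $E_1\cong E_2$ then already $\gamma\in\Hyp_2(q)$. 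The only uncovered case is $E_1\not\cong E_2$, where Lemma~\ref{lem:biquad} only gives $\gamma\in K^{\times2}\cdot N(M/K)$ for the \emph{biquadratic} field $M=E_1\otimes_KE_2$ splitting $q$. Thus (i) reduces to showing that such a biquadratic norm lies in $\Hyp_2(q)$.

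To close this gap I would invoke the triality correspondence announced in the section heading. By Remark~\ref{abc88} the even Clifford algebra $C_0(q)\cong C^+\times C^-$ has both simple components Brauer-equivalent to $Q$; via Morita equivalence each component, with its canonical involution, corresponds to a hermitian form $h$ of rank~$4$ over $(Q,\bar{\phantom{x}})$. Triality provides an isomorphism of $\PGO_+(q)=\PGO_+(\End_K(L),\sigma_q)$ with the group of projective similitudes of $h$, under which (tracking the multiplier) the multiplier group $G(q)$ and that of $h$ generate the same subgroup of $K^\times$ modulo $K^{\times2}$; and a field extension makes $q$ hyperbolic precisely when it makes $h$ hyperbolic. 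The plan is therefore to transport the troublesome $\gamma$ to the hermitian side, where the problem of expressing it through quadratic data becomes accessible through the arithmetic of $Q$.

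The main obstacle is this final hermitian-side analysis: one must show that a multiplier of $h$ of the above biquadratic type is in fact a product of norms from quadratic extensions over which $h$ (equivalently $q$) becomes hyperbolic. Concretely, I expect to diagonalize $h$ over $Q$ and to recognize that the similarity factor $\gamma$ splits, through reduced-norm and represented-value data of $h$, into contributions each realized by a quadratic extension that simultaneously splits $Q$ and the relevant rank-one piece of $h$, hence splits $q$; the residual two-factor ambiguity is then absorbed into $K^{\times2}\cdot\Hyp_2(q)$ by a further application of Lemma~\ref{lem:biquad}. The delicate point, and the reason Section~\ref{abc97} alone does not suffice, is that a quadratic splitting field of $Q$ need not split $q$, so the quadratic extensions must be chosen using the hermitian structure rather than $Q$ by itself. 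Tracing these quadratic norms back through the triality isomorphism yields $\gamma\in\Hyp_2(q)$, completing case (i) and with it the proposition.
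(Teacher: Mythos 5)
Your reduction of (ii) to (i), your treatment of the split and isotropic cases, and your diagnosis of exactly what Section~\ref{abc97} leaves open (the case $E_1\not\cong E_2$, where Lemma~\ref{lem:biquad} only produces a norm from the biquadratic field $M$) are all correct, and your decision to pass via triality to a rank-$4$ skew-hermitian form $h$ over the quaternion division algebra is indeed the paper's strategy. But the heart of the argument is missing: everything after ``I expect to diagonalize $h$\dots'' is a hope, not a proof, and it is precisely the step where all the work lies. There is no evident way to ``split'' a similitude multiplier (or a biquadratic norm) into quadratic-norm factors by diagonalizing $h$; and your suggestion that the residual two-factor ambiguity is absorbed by a further application of Lemma~\ref{lem:biquad} runs backwards --- that lemma turns an intersection of two quadratic norm groups into $K^{\times2}$ times a biquadratic norm group, which is what created the difficulty in the first place.

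What the paper actually does at this point rests on two ingredients absent from your proposal. First, it identifies $\Hyp_2(q)$ with the spinor norm group $\Sn(h)$: by Lemma~\ref{lem:quadsplit} and Proposition~\ref{trial2:prop}, the quadratic extensions over which $q$ becomes hyperbolic are exactly the fields $K\bigl(h(v,v)\bigr)$, and Proposition~\ref{prop:sn} computes $\Sn(h)$ as the product of the norm groups of precisely those fields, using the Hahn--O'Meara theorem that $\Orth_+(W,h)$ is generated by the transformations $\tau_{v,r}$, whose spinor norms are computed one anisotropic vector at a time. Second --- and this is the decisive step for which your sketch offers no substitute --- it proves $\Hyp(q)\subset\Sn(h)$ by Merkurjev's norm principle for spinor norms (a twisted analogue of Knebusch's norm theorem): if $q_E$ is hyperbolic for a finite extension $E/K$, then $h_E$ is isotropic by Proposition~\ref{trial2:prop}, so the spinor norm group over (the compositum with the perfect closure of) $E$ is everything, and the norm principle pushes this down to $N(E/K)\subset\Sn(h)$; in characteristic $p>2$ the argument must be run over the perfect closure $\widetilde K$ and descended via Corollary~\ref{cor:perf}, since the norm principle is invoked over perfect fields. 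Note that this route never manipulates the multiplier $\gamma$ or the biquadratic norm at all: it converts norms from arbitrary finite splitting extensions directly into spinor norms, i.e., into products of quadratic norms. Without the spinor-norm identification and the norm principle, your case $E_1\not\cong E_2$ remains open.
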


\noindent
Since in each case $G(q)=\Hyp(q)$ by Proposition~\ref{prop:E7G=H} and
Corollary~\ref{cor:E8G=H}, Theorem~\ref{abc2} follows from
Proposition~\ref{prop:Hyp2}.  \medbreak\par
We start with the case of $8$-dimensional quadratic forms. If ${\rm
  clif}(q)$ is split, then $q$ is a multiple of a $3$-fold Pfister
form, and the result follows from Lemma~\ref{lem:pfister}. Similarly,
if $q$ is isotropic, then $q$ is Witt-equivalent to a multiple of a
$2$-fold Pfister form, and the result follows from
Lemma~\ref{lem:pfister}. We may thus assume that $(K,L,q)$ is of type
$E_7$ and let $D$ be the quaternion division algebra over $K$ that
represents ${\rm clif}(q)$. We show next that the Clifford algebra
construction associates to $q$ a skew-hermitian form $h$ of rank~$4$
over $D$, and we shall complete the proof of
Proposition~\ref{prop:Hyp2}(i) by proving that
\[
\Hyp(q)=\Sn(h)=\Hyp_2(q);
\]
see Proposition~\ref{prop:HypSn}.
\medbreak\par
Let $(A,\sigma)$ be a central simple $K$-algebra of degree~$8$ with an
orthogonal involution of trivial discriminant. The Clifford algebra
$C(A,\sigma)$ decomposes into a direct product of two central simple
$K$-algebras of degree~$8$:
\[
C(A,\sigma)=C_+(A,\sigma)\times C_-(A,\sigma).
\]
Recall that $C(A,\sigma)$ carries a canonical involution
$\underline{\sigma}$, which induces orthogonal involutions $\sigma_+$
and $\sigma_-$ on $C_+(A,\sigma)$ and $C_-(A,\sigma)$ respectively. By
triality (see \cite[(42.3)]{BoI}), the Clifford algebras of
$(C_+(A,\sigma),\sigma_+)$ and $(C_-(A,\sigma),\sigma_-)$ satisfy
\begin{align*}
  (C(C_+(A,\sigma),\sigma_+),\underline{\sigma_+}) & =
  (C_-(A,\sigma),\sigma_-)\times (A,\sigma),\\
  (C(C_-(A,\sigma),\sigma_-),\underline{\sigma_-}) & = (A,\sigma)
  \times (C_+(A,\sigma),\sigma_+).
\end{align*}

\begin{proposition}
  \label{trial:prop}
  The following hold:
  \begin{enumerate}
  \item[(1)] If $A$ is split, then $(C_+(A,\sigma),\sigma_+)$ and
    $(C_-(A,\sigma),\sigma_-)$ are isomorphic.
  \item[(2)] If $(A,\sigma)$ is split and isotropic, then
    $(C_+(A,\sigma),\sigma_+)$ and $(C_-(A,\sigma),\sigma_-)$ are
    hyperbolic.
  \item[(3)] If $(A,\sigma)$ is split and hyperbolic, then
    $(C_+(A,\sigma),\sigma_+)$ and $(C_-(A,\sigma),\sigma_-)$ are
    split and hyperbolic.
  \end{enumerate}
\end{proposition}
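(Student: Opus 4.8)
The plan is to reduce all three statements to a single computation inside the even Clifford algebra of a quadratic form. Since $\charac(K)\neq2$ and $A$ is split, in every case I may identify $(A,\sigma)$ with $(\End_K V,\sigma_q)$ for an $8$-dimensional quadratic space $(V,q)$ of trivial discriminant, whose polar bilinear form I denote by $b$ (so $xy+yx=b(x,y)$ and $x^2=q(x)$ in the Clifford algebra). Under this identification $C(A,\sigma)$ is the even Clifford algebra $C_0(q)$, the canonical involution $\underline\sigma$ is the restriction to $C_0(q)$ of the reversal involution fixing $V$ pointwise, and $C_\pm(A,\sigma)$ are the two simple factors cut out by the central idempotents $\frac12(1\pm z)$, where $z=e_1\cdots e_8$ is the volume element attached to an orthogonal basis. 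A first observation, used throughout, is that $\underline\sigma$ preserves each factor: reversing the eight pairwise anticommuting factors of $z$ introduces the sign $(-1)^{\binom{8}{2}}=+1$, so $\underline\sigma(z)=z$ and I may set $\sigma_\pm=\underline\sigma|_{C_\pm}$, which are genuine involutions on $C_+$ and $C_-$.

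For part~(1) I would exploit the action of the full orthogonal group $\Orth(q)$ on $C_0(q)$. Any improper isometry $g$---for instance a reflection in an anisotropic vector---sends $z$ to $\det(g)\,z=-z$, hence interchanges the two central idempotents and carries $C_+$ isomorphically onto $C_-$. Because $g$ is an isometry it commutes with the reversal involution, so the induced algebra isomorphism intertwines $\sigma_+$ and $\sigma_-$; this yields the desired isomorphism $(C_+,\sigma_+)\cong(C_-,\sigma_-)$ of algebras with involution.

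For parts~(2) and~(3) the key is to manufacture a hyperbolicity-witnessing idempotent. When $q$ is isotropic I would choose a hyperbolic pair $v,w\in V$ with $q(v)=q(w)=0$ and $b(v,w)=1$. Inside $C_0(q)$ the element $e=vw$ is idempotent, since $e^2=v(wv)w=v(1-vw)w=vw=e$ using $v^2=w^2=0$, and reversal gives $\underline\sigma(e)=wv=1-e$. Decomposing $e=(e_+,e_-)$ along $C_+\times C_-$, the relation $\sigma_\pm(e_\pm)=1-e_\pm$ forces each $e_\pm$ to be a nontrivial idempotent---neither $0$ nor $1$, as either value would contradict $\sigma_\pm(e_\pm)=1-e_\pm$---so by the standard criterion (see \cite{BoI}) each $\sigma_\pm$ is hyperbolic. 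This settles~(2); for~(3) it remains to note that a hyperbolic $q$ has $C_0(q)\cong \End_K(W)\times\End_K(W')$ with $\dim W=\dim W'=8$, whence $C_+$ and $C_-$ are split, and they are hyperbolic by~(2).

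The Clifford-algebra identities themselves are routine; the step I expect to require the most care is tracking the canonical involution through the decomposition $C_0(q)=C_+\times C_-$. Concretely, I must verify that $\underline\sigma$ fixes the volume element---so that it does not swap the two factors---that an improper isometry nonetheless does swap them while still commuting with $\underline\sigma$, and that the components $e_\pm$ of the idempotent $e=vw$ are both nontrivial. These are sign- and nondegeneracy-bookkeeping points hinging on the parity $\binom{8}{2}\equiv0\pmod2$, and are exactly where the argument would break if that parity were mishandled.
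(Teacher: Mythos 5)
Your proof is correct, but it is genuinely different in character from the paper's: the paper disposes of the proposition in one line by citation --- (1) is declared well-known, (2) is quoted from \cite[(8.5)]{BoI}, and (3) is deduced formally from (2) plus the triviality of the Clifford invariant of a hyperbolic form --- whereas you reprove everything by explicit computation in $C_0(q)$. Your argument for (1) (an improper isometry sends the volume element $z$ to $-z$, hence swaps the two central idempotents while commuting with reversal) is the standard proof of the ``well-known'' fact; your argument for (2) (the idempotent $e=vw$ built from a hyperbolic pair satisfies $\underline{\sigma}(e)=1-e$, and its two components witness hyperbolicity of $\sigma_\pm$ via the idempotent criterion of \cite{BoI}) is essentially a self-contained proof of the cited result \cite[(8.5)]{BoI} in the split case; and for (3) you replace the Clifford-invariant argument by the direct observation that $C_0$ of a hyperbolic $8$-dimensional form is $M_8(K)\times M_8(K)$. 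What your route buys is self-containedness and transparency about exactly where the hypothesis $\deg A=8$ (more precisely $\equiv 0 \bmod 4$) enters: $\underline{\sigma}(z)=(-1)^{\binom{8}{2}}z=z$, so the canonical involution preserves the two factors rather than swapping them; what the paper's route buys is brevity and applicability of the cited statements beyond the split case. Two cosmetic remarks: you should say explicitly that trivial discriminant lets you normalize $z$ so that $z^2=1$ (otherwise $\tfrac12(1\pm z)$ are not idempotents over $K$), and your worry about $e_\pm\in\{0,1\}$ is unnecessary --- the relation $\sigma_\pm(e_\pm)=1-e_\pm$ is by itself the hyperbolicity criterion, and it already excludes $e_\pm=0$ or $1$ automatically, as your own computation shows.
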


\begin{proof}
  (1) is well-known, (2) is in \cite[(8.5)]{BoI}, and (3) follows from
  (2) and the fact that the Clifford invariant of a hyperbolic
  quadratic form is trivial.
\end{proof}

We apply this proposition in the following context: let $(K,L,q)$ be
an $8$-dimen\-sional quadratic space with $\disc q=1$, and assume ${\rm
  clif}(q)$ is represented by a quaternion division algebra $D$. Let
$\ad_q\colon\End_KL\to \End_KL$ be the adjoint involution of $q$. We
apply the discussion above with $(A,\sigma)=(\End_KL,\ad_q)$. Then
$C(A,\sigma)=C_0(L,q)$ and $(C_+(A,\sigma),\sigma_+)$,
$(C_-(A,\sigma),\sigma_-)$ are isomorphic to $(\End_DW,\ad_h)$ for
some $4$-dimensional skew-hermitian space $(W,h)$ over $D$ (with its
conjugation involution).

\begin{proposition}
  \label{trial2:prop}
  For an arbitrary extension $E/K$, the following statements are
  equivalent:
  \begin{enumerate}
  \item[(a)] $q_E$ is hyperbolic;
  \item[(b)] $D_E$ is split and $(\End_DW,\ad_h)_E$ is hyperbolic;
  \item[(c)] $D_E$ is split and $(\End_DW,\ad_h)_E$ is isotropic.
  \end{enumerate}
\end{proposition}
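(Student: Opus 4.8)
The plan is to prove the equivalence of (a), (b), (c) by establishing the cycle $(a)\Rightarrow(b)\Rightarrow(c)\Rightarrow(a)$, exploiting the triality dictionary set up in Proposition~\ref{trial:prop}. The implication $(b)\Rightarrow(c)$ is immediate, since a hyperbolic form is in particular isotropic. So the real content lies in $(a)\Rightarrow(b)$ and $(c)\Rightarrow(a)$.

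For $(a)\Rightarrow(b)$, I would argue as follows. If $q_E$ is hyperbolic, then its Clifford invariant $\clif(q_E)$ is trivial, so the quaternion algebra $D_E$ is split. Thus over $E$ the algebra $(A,\sigma)_E=(\End_KL,\ad_q)_E$ becomes split and hyperbolic. I can then invoke Proposition~\ref{trial:prop}(3): since $(A,\sigma)_E$ is split and hyperbolic, both $(C_+(A,\sigma),\sigma_+)_E$ and $(C_-(A,\sigma),\sigma_-)_E$ are split and hyperbolic. But these components are precisely $(\End_DW,\ad_h)_E$ (up to isomorphism), so $(\End_DW,\ad_h)_E$ is hyperbolic, giving (b). This direction is clean because it only uses that hyperbolicity of $q$ forces both the splitting of $D$ and (via triality) the hyperbolicity on the Clifford side.

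For the reverse direction $(c)\Rightarrow(a)$, I would run the triality correspondence in the opposite direction, using the second displayed triality identity
\[
(C(C_-(A,\sigma),\sigma_-),\underline{\sigma_-}) = (A,\sigma)\times(C_+(A,\sigma),\sigma_+).
\]
Suppose $D_E$ is split and $(\End_DW,\ad_h)_E$ is isotropic. Splitting of $D_E$ means $(A,\sigma)_E$ is split, i.e.\ of the form $(\End_KL,\ad_q)_E$, and likewise the two Clifford components are split. Now view $(\End_DW,\ad_h)_E\cong (C_-(A,\sigma),\sigma_-)_E$ as the starting split isotropic algebra-with-involution; by Proposition~\ref{trial:prop}(2) its Clifford components $(C_+(C_-,\sigma_-))_E$ are hyperbolic. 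By the triality identity above, one of these Clifford components is $(A,\sigma)_E=(\End_KL,\ad_q)_E$. A split hyperbolic adjoint involution corresponds to a hyperbolic quadratic form, so $q_E$ is hyperbolic, giving (a).

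The main obstacle I anticipate is bookkeeping rather than a deep difficulty: one must make sure the triality identities are applied to the correct component and that ``isotropic for a split algebra with orthogonal involution'' is correctly translated into ``isotropic for the underlying quadratic form,'' with the implication ``split isotropic $\Rightarrow$ split Clifford components hyperbolic'' supplied exactly by Proposition~\ref{trial:prop}(2). A subtlety worth checking is that when $D_E$ is split, the identification $(\End_DW,\ad_h)_E\cong(C_\pm(A,\sigma),\sigma_\pm)_E$ is compatible with the triality maps, so that feeding it back through $C(\,\cdot\,)$ genuinely returns $(A,\sigma)_E$ rather than the other component; the two triality identities together guarantee that $(A,\sigma)$ appears among the Clifford components of each of $(C_+,\sigma_+)$ and $(C_-,\sigma_-)$, so either choice recovers $q$. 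Once that compatibility is pinned down, the equivalence follows directly from parts (2) and (3) of Proposition~\ref{trial:prop}.
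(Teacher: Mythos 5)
Your proposal is correct and follows essentially the same route as the paper: $(a)\Rightarrow(b)$ via Proposition~\ref{trial:prop}(3), $(b)\Rightarrow(c)$ trivially, and $(c)\Rightarrow(a)$ by applying Proposition~\ref{trial:prop}(2) with $(\End_DW,\ad_h)_E$ in the role of $(A,\sigma)$ and then using the displayed triality identities to recognize $(\End_KL,\ad_q)_E$ among its Clifford components. The compatibility subtlety you flag is resolved exactly as you suggest, since both triality identities exhibit $(A,\sigma)$ as a Clifford component, so either choice recovers $q$.
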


\begin{proof}
  (a)~$\Rightarrow$~(b): This readily follows from
  Proposition~\ref{trial:prop}(3).

  (b)~$\Rightarrow$~(c): Clear.

  (c)~$\Rightarrow$~(a): This follows from
  Proposition~\ref{trial:prop}(2) with $(\End_DW,\ad_h)_E$ for
  $(A,\sigma)$; then by triality $(C_+(A,\sigma),\sigma_+)$ or
  $(C_-(A,\sigma),\sigma_-)$ is isomorphic to $(\End_KL,\ad_q)_E$).
\end{proof}

The next results \ref{lem:quadsplit}--\ref{cor:perf} hold for
skew-hermitian forms of arbitrary dimension.

\begin{lemma}
  \label{lem:quadsplit}
  Let $(W,h)$ be a skew-hermitian space over a quaternion division
  algebra $D$ over $K$ and let $E$ be a quadratic extension of $K$. If
  $h$ is anisotropic, the following conditions are equivalent:
  \begin{enumerate}
  \item[(i)] $E\cong K\bigl(h(v,v)\bigr)$ for some $v\in W$;
  \item[(ii)] $D_E$ is split and $h_E$ is isotropic.
  \end{enumerate}
\end{lemma}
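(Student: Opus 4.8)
The plan is to prove the two implications separately, the easy one being (i)$\Rightarrow$(ii) and the substantial one being (ii)$\Rightarrow$(i).

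For (i)$\Rightarrow$(ii), suppose $\pi:=h(v,v)$ satisfies $K(\pi)\cong E$. Since $h$ is skew-hermitian, $\pi$ is a pure quaternion, and $\pi\neq0$ because $E\neq K$; thus $E\cong K(\pi)$ embeds as a maximal subfield of $D$, which forces $D_E$ to split (a separable quadratic extension splits a quaternion algebra exactly when it embeds in it). For isotropy of $h_E$ it then suffices to treat the rank-one subform $\langle\pi\rangle$ obtained by restricting $h$ to $vD$. Writing $\pi^2=d\in K^\times$ with $E\cong K(\sqrt d)$ and identifying $D_E\cong M_2(E)$ (quaternion conjugation becoming the symplectic involution $x\mapsto\operatorname{adj}(x)$), the element $\pi$ is a trace-zero matrix with eigenvalues $\pm\sqrt d\in E$. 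Choosing a rank-one $x\in M_2(E)$ whose image is a $\pi$-eigenline gives $\operatorname{adj}(x)\,\pi\,x=0$, i.e.\ a nonzero isotropic vector for $\langle\pi\rangle_E$, and hence for $h_E$.

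For (ii)$\Rightarrow$(i) — the main obstacle, since one must produce a vector defined over $K$ out of a merely $E$-rational isotropic vector — the first step is to extract linear-algebra data. Fix a nonsquare $d\in K^\times$ with $E\cong K(\sqrt d)$ and let $\delta=\sqrt d$, a central conjugation-fixed element of $D_E=D\otimes_KE$. Write an $E$-isotropic vector as $w=v_1+\delta v_2$ with $v_1,v_2\in W$; then
$$
h_E(w,w)=\bigl(h(v_1,v_1)+d\,h(v_2,v_2)\bigr)+\delta\bigl(h(v_1,v_2)+h(v_2,v_1)\bigr),
$$
and comparing the two $K$-components of $D_E=D\oplus D\delta$ yields
$$
h(v_1,v_1)=-d\,h(v_2,v_2)\qquad\text{and}\qquad c:=h(v_1,v_2)\in K,
$$
the latter because $h(v_2,v_1)=-\overline{h(v_1,v_2)}$ forces $h(v_1,v_2)$ to be conjugation-symmetric.

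The key trick is then to replace the central scalar $\delta$, which lives only over $E$, by a genuine subfield generator of $D$. Since $D_E$ splits, $E$ embeds in $D$, so there is a pure quaternion $\pi_0\in D$ with $\pi_0^2=d$ and $K(\pi_0)\cong E$; I form the $K$-rational vectors $v_\pm=v_1\pm v_2\pi_0\in W$. Writing $\pi_2=h(v_2,v_2)$, using $h(v_1,v_1)=-\pi_0^2\pi_2$, and noting that $\pi_0\pi_2+\pi_2\pi_0$ is conjugation-fixed, hence equals some $2g\in K$, a direct computation collapses the quaternionic cross terms to
$$
h(v_+,v_+)=2(c-g)\,\pi_0,\qquad h(v_-,v_-)=-2(c+g)\,\pi_0,
$$
both $K$-multiples of $\pi_0$. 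Because $v_+-v_-=2v_2\pi_0\neq0$, at least one of $v_\pm$ is nonzero, and anisotropy of $h$ makes the corresponding value a \emph{nonzero} multiple of $\pi_0$; hence $K\bigl(h(v_\pm,v_\pm)\bigr)=K(\pi_0)\cong E$, which is (i). The subtle point is exactly this passage from $\delta$ to $\pi_0\in D$: it is what yields a vector over $K$, and the two sign choices $v_\pm$ are needed precisely to rule out the degenerate possibility that the chosen combination vanishes.
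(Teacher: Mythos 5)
Your proof is correct and follows essentially the same route as the paper's own. In (ii)$\Rightarrow$(i), both arguments write the $E$-isotropic vector as $x+y\sqrt{a}$, extract the two relations $h(x,x)=-a\,h(y,y)$ and $h(x,y)\in K$, and then substitute a pure quaternion $\lambda\in D$ with $\lambda^2=a$ for the central $\sqrt{a}$, concluding that $h(x+y\lambda,\,x+y\lambda)$ is a $K$-multiple of $\lambda$ (the paper deduces this from the observation that this value commutes with $\lambda$, you by the explicit computation giving $2(c\mp g)\lambda$); likewise your rank-one eigenline matrix in (i)$\Rightarrow$(ii) is precisely the paper's explicit isotropic vector $v\bigl(h(v,v)+\sqrt{a}\bigr)$ viewed in $M_2(E)$. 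The one genuine difference is in your favor: the paper asserts $h(x+y\lambda,\,x+y\lambda)=\lambda b$ with $b\in K^{\times}$ without excluding the degenerate possibility $x+y\lambda=0$, which can actually occur (e.g.\ if $h(y,y)\in K\lambda$, the vector $y(\sqrt{a}-\lambda)$ is $E$-isotropic and yields $x+y\lambda=0$), whereas your two-sign variant $v_{\pm}=v_1\pm v_2\lambda$, together with the remark that $v_+-v_-=2v_2\lambda\neq0$, closes that small gap.
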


\begin{proof}
  If (i) holds, then $E$ is isomorphic to a maximal subfield of $D$,
  hence $D_E$ is split. Let $h(v,v)^2=a\in K^\times$, so $E\cong
  K(\sqrt{a})$. Then $v\cdot(h(v,v)+\sqrt{a})\in W_E$ is isotropic for
  $h_E$. Thus, (ii) holds.

  Conversely, if (ii) holds, then $E$ is isomorphic to a maximal
  subfield of $D$. Let $E=K(\sqrt{a})$ for some $a\in K$, and let
  $\lambda\in D$ be a pure quaternion such that $\lambda^2=a$. Suppose
  $x+y\sqrt{a}\in W_E$ is $h_E$-isotropic for some $x$, $y\in W$. The
  condition $h_E(x+y\sqrt{a}, x+y\sqrt{a})=0$ yields
  \[
  h(x,x)+h(y,y)a=0 \quad\text{and}\quad h(x,y)+h(y,x)=0.
  \]
  Since $h$ is skew-hermitian, the second equation shows that
  $h(x,y)\in K$. Then
  \[
  h(x+y\lambda,x+y\lambda) = 2h(x,y)\lambda - h(y,y)a -\lambda h(y,y)
  \lambda
  \]
  and the right side commutes with $\lambda$. Therefore,
  $h(x+y\lambda, x+y\lambda)=\lambda b$ for some $b\in K^\times$, and
  we have $E\cong K\bigl(h(v,v)\bigr)$ with $v=x+y\lambda$.
\end{proof}

For any skew-hermitian space $(W,h)$ over a quaternion division
algebra $D$ over $K$, we let $\Sn(h)$ denote the group of spinor norms
of $h$, which is the image of the Clifford group
$\Gamma(\End_DW,\ad_h)=\Gamma(W,h)$ under the multiplier map; see
\cite[(13.30)]{BoI}. 

\begin{proposition}
  \label{prop:sn}
  If $h$ is anisotropic, then $\Sn(h)=\prod_E N(E/K)$, where $E$ runs
  over the quadratic extensions of $K$ satisfying the equivalent
  conditions~(i) and (ii) of Lemma~\ref{lem:quadsplit}.
\end{proposition}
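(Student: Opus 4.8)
The plan is to prove the two inclusions $\prod_E N(E/K)\subseteq\Sn(h)$ and $\Sn(h)\subseteq\prod_E N(E/K)$ separately, reducing each to a computation on rank-one subspaces together with the behaviour of the spinor norm under orthogonal decomposition. Throughout I write $\Gamma(W,h)$ for the Clifford group of $(\End_D W,\ad_h)$ and $\Sn$ for the multiplier map, following \cite[(13.30)]{BoI}; recall that for $g\in\Gamma$ one has $\Sn(g)=\underline{\sigma}(g)\,g$, where $\underline{\sigma}$ is the canonical involution of the Clifford algebra.

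The heart of the argument is a rank-one computation. For an anisotropic $v\in W$ put $\lambda=h(v,v)$, a nonzero pure quaternion, and $E=K(\lambda)=K\bigl(h(v,v)\bigr)$. I would first identify $\bigl(\End_D(Dv),\ad_{h}\bigr)$ with $(D,\operatorname{Int}(\lambda^{-1})\circ\gamma)$, where $\gamma$ is the canonical involution of $D$; since $\lambda$ is pure this is an orthogonal involution, and a short computation with its space of trace-zero symmetric elements shows that its Clifford algebra is the quadratic \'etale $K$-algebra $E=K(\sqrt{\lambda^2})$, on which $\underline{\sigma}$ acts as the nontrivial automorphism of $E/K$. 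Hence the restriction of $\Sn$ to $\Gamma(Dv,h)\subseteq E^\times$ is $g\mapsto\underline{\sigma}(g)\,g=N_{E/K}(g)$. As $\Gamma(Dv,h)$ is the full unit group $E^\times$ (the Clifford group of an orthogonal involution of degree two is $R_{E/K}\mathbb{G}_m$), I conclude $\Sn(\langle\lambda\rangle)=N_{E/K}(E^\times)=N(E/K)$. Note that $K\bigl(h(v',v')\bigr)=E$ for every anisotropic $v'\in Dv$, since $\operatorname{Nrd}(\overline{\xi}\lambda\xi)=\operatorname{Nrd}(\xi)^2\operatorname{Nrd}(\lambda)$ forces $h(v',v')^2\in\lambda^2\cdot K^{\times2}$; thus $E$ is exactly the quadratic extension attached to the line $Dv$ by Lemma~\ref{lem:quadsplit}.

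For the inclusion $\prod_E N(E/K)\subseteq\Sn(h)$ I would use the functoriality of the Clifford group under an orthogonal decomposition $W=Dv\perp(Dv)^\perp$: extending an isometry of $Dv$ by the identity on $(Dv)^\perp$ lifts to a homomorphism $\Gamma(Dv,h)\to\Gamma(W,h)$ compatible with the canonical involutions, hence with $\Sn$. Combined with the rank-one computation this gives $N\bigl(K(h(v,v))/K\bigr)=\Sn(\langle h(v,v)\rangle)\subseteq\Sn(h)$ for every anisotropic $v$. By Lemma~\ref{lem:quadsplit} the fields $K(h(v,v))$ are precisely the quadratic extensions $E$ over which $D$ splits and $h$ becomes isotropic, so ranging over all $v$ yields $\prod_E N(E/K)\subseteq\Sn(h)$.

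For the reverse inclusion I would invoke the Cartan--Dieudonn\'e theorem for the anisotropic form $h$ (valid since $\charac K\ne2$): the group of proper isometries is generated by products of two quasi-reflections, each quasi-reflection fixing a hyperplane $v^\perp$ pointwise and hence being supported on the line $Dv$. Restricting such a product to the subspace $Du+Dv$ it generates and lifting through the Clifford group of that subspace, its spinor norm lies in $N(E_u/K)\,N(E_v/K)$ with $E_u,E_v$ as above; since each $E_u,E_v$ is one of the extensions occurring in the product, this gives $\Sn(h)\subseteq\prod_E N(E/K)$ and completes the proof. I expect this last step to be the main obstacle: the generation statement must be made precise in the division-algebra setting, and, because a single quasi-reflection is improper and does not itself lift into $\Gamma(W,h)$, one must compute the spinor norm of a product of two quasi-reflections supported on distinct lines $Du$, $Dv$ directly inside the degree-four Clifford algebra of $(\End_D(Du+Dv),\ad_h)$ and check that it factors as a product of values from $N(E_u/K)$ and $N(E_v/K)$. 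By contrast the rank-one identification and the compatibility of $\Sn$ with orthogonal sums are routine once set up via \cite{BoI}.
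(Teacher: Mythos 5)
Your rank-one identification of $\bigl(\End_D(vD),\ad_{h_v}\bigr)$ with $(D,\operatorname{Int}(\nu^{-1})\circ\gamma)$, the computation $\Sn(h_v)=N\bigl(K(\nu)/K\bigr)$ for $\nu=h(v,v)$, and the resulting inclusion $\prod_E N(E/K)\subseteq\Sn(h)$ are all correct and essentially identical to the paper's argument. The genuine gap is in the reverse inclusion, and it is not the rank-two Clifford computation you flag as the ``main obstacle''---it is the generation statement itself. There is no Cartan--Dieudonn\'e theorem of the kind you invoke for skew-hermitian forms over a quaternion \emph{division} algebra; in fact the quasi-reflections your argument is built from do not exist. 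A hyperplane-fixing isometry of $(W,h)$ has the form $v\mapsto v\xi$ with $\overline{\xi}\nu\xi=\nu$, and writing $\overline{\xi}=\operatorname{Nrd}(\xi)\xi^{-1}$ and taking reduced norms forces $\operatorname{Nrd}(\xi)=\pm1$. If $\operatorname{Nrd}(\xi)=1$, then $\xi$ commutes with $\nu$, so $\xi\in K(\nu)$ and the isometry is proper. If $\operatorname{Nrd}(\xi)=-1$, then $\xi$ anticommutes with $\nu$, hence $\xi$ is a pure quaternion with $\xi^2=-\operatorname{Nrd}(\xi)=1$, which is impossible in a division algebra. So \emph{every} rank-one isometry is proper, and there are no improper quasi-reflections to pair up. (In fact $\Orth(W,h)$ has no improper elements at all in the present situation: an improper isometry would interchange the two components of the Clifford algebra of $(\End_DW,\ad_h)$, but by the triality relations recalled before Proposition~\ref{trial:prop} these components are $(\End_KL,\ad_q)$ and an algebra Brauer-equivalent to $D$, which are not isomorphic.)

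The ingredient that actually closes the argument---and the one nontrivial external input in the paper's proof---is \cite[Thm.~6.2.17]{HO}: the group $\Orth_+(W,h)$ of direct isometries is generated by the \emph{proper} quasi-symmetries $\tau_{v,r}\colon x\mapsto x-vh(vr,x)$, where $v\in W$ is anisotropic and $r\in D^\times$ satisfies $r-\overline{r}=rh(v,v)\overline{r}$. Each $\tau_{v,r}$ fixes $v^\perp$ pointwise, so its spinor norm is computed inside the line $vD$, and your own rank-one calculation applies verbatim to every generator; no rank-two computation ever arises, and both inclusions follow at once since the multiplier map on $\Gamma(W,h)$ factors through $\Orth_+(W,h)$. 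Without this (or an equivalent) generation theorem for $\Orth_+(W,h)$, your inclusion $\Sn(h)\subseteq\prod_E N(E/K)$ remains unproven; with it, your proof becomes the paper's.
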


\begin{proof}
  The multiplier map $\Gamma(W,h)\to K^\times$ factors
  through the vector representation $\Gamma(W,h)\to\Orth_+(W,h)$,
  where $\Orth_+(W,h)$ is
  the group of direct isometries of the space $(W,h)$. By
  \cite[Thm.~6.2.17]{HO}, this group is generated by transformations
  of the form
  \[
  \tau_{v,r}\colon W\to W,\quad x\mapsto x-vh(vr,x)
  \]
  where $v\in W$ is an anisotropic vector and $r\in D^\times$
  satisfies $r-\overline{r}=rh(v,v)\overline{r}$. To compute the
  spinor norm of that transformation, observe that $\tau_{v,r}$ is the
  identity on $v^\perp$, hence the spinor norm of $\tau_{v,r}$ is the
  spinor norm of its restriction to the $1$-dimensional subspace
  $vD$. Let $\nu=h(v,v)\in D^\times$ and let $h_v$ denote the
  restriction of $h$ to $vD$, so
  \[
  h_v(v\lambda,v\mu)=\overline{\lambda}\nu\mu \qquad\text{for
    $\lambda$, $\mu\in D$.}
  \]
  We have
  \[
  \Orth_+(vD,h_v) = \{\theta\in K(\nu)^\times\mid
  \theta\overline{\theta}=1\} \quad\text{and}\quad \Gamma(vD,h_v) =
  K(\nu)^\times
  \]
  (where $\theta\in K(\nu)^\times$ is identified with the map
  $v\lambda\mapsto v\theta\lambda$ for $\lambda\in D$). The vector
  representation $\Gamma(vD,h_v)\to\Orth_+(vD,h_v)$ carries $u\in
  K(\nu)^\times$ to $u\overline{u}^{-1}$, hence the spinor norm of
  that isometry is $u\overline{u}K^{\times2}$; see
  \cite[(13.17)]{BoI}. This shows that $\Sn(h_v)$ consists of norms
  from the quadratic extension $K(\nu)/K$. Since $\Sn(h)$ is generated
  by the groups $\Sn(h_v)$ for the anisotropic vectors $v\in W$, the
  proposition follows.
\end{proof}

\begin{corollary}
  \label{cor:perf}
  Let $(W,h)$ be a skew-hermitian space over a quaternion division
  algebra $D$ over $K$, and let $p=\charac(K)>2$. For $\widetilde
  K=K^{-p^{-\infty}}$ the perfect closure of $K$, we have
  $\Sn(h_{\widetilde K})\cap K=\Sn(h)$.
\end{corollary}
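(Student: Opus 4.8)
The plan is to prove the two inclusions of $\Sn(h_{\widetilde K})\cap K=\Sn(h)$ separately, the inclusion $\Sn(h)\subseteq\Sn(h_{\widetilde K})\cap K$ being immediate: any isometry of $(W,h)$ is an isometry of $(W,h)_{\widetilde K}$ with the same multiplier, so $\Sn(h)\subseteq\Sn(h_{\widetilde K})$, and $\Sn(h)\subseteq K^\times$ by definition. For the reverse inclusion I would rely on the explicit description in Proposition~\ref{prop:sn}. If $h$ is isotropic, so is $h_{\widetilde K}$, and both spinor-norm groups are easily seen to be the full multiplicative group, so there is nothing to prove; thus assume $h$ is anisotropic.

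The decisive first step is to verify that $h_{\widetilde K}$ is \emph{still} anisotropic, so that Proposition~\ref{prop:sn} applies over $\widetilde K$ as well. Since $\widetilde K/K$ is purely inseparable it is a filtered union of finite subextensions $K^{1/p^n}/K$, each of degree a power of the odd prime $p$, and anisotropy is preserved under such extensions by Springer's theorem. As $h$ is skew-hermitian rather than quadratic, I would apply Springer in the form valid for hermitian forms over a division algebra with involution---or, equivalently, pass to the associated quadratic trace form over $K$, whose anisotropy is equivalent to that of $h$, and invoke the classical Springer theorem. Granting this, Proposition~\ref{prop:sn} gives $\Sn(h)=\prod_EN(E/K)$ and $\Sn(h_{\widetilde K})=\prod_{\widetilde E}N(\widetilde E/\widetilde K)$, where the products run over the quadratic extensions admissible in the sense of Lemma~\ref{lem:quadsplit}.

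I would then set up a dictionary between the two families of extensions. Because $\widetilde K$ is perfect, every class in $\widetilde K^\times/\widetilde K^{\times2}$ is represented by a suitable $p^n$-th power, hence by an element of $K$; together with $\widetilde K^{\times2}\cap K^\times=K^{\times2}$ (a separable quadratic subextension of the purely inseparable $\widetilde K/K$ must be trivial) this yields $K^\times/K^{\times2}\xrightarrow{\sim}\widetilde K^\times/\widetilde K^{\times2}$, so each $\widetilde E$ equals $\widetilde K\otimes_KE$ for a unique quadratic $E/K$. Next I would check that $\widetilde E$ is admissible if and only if $E$ is: splitting of $D$ is unaffected because $[D]$ is $2$-torsion whereas the kernel of $\mathrm{Br}(E)\to\mathrm{Br}(\widetilde E)$ is $p$-primary, and once $D_E$ splits, the isotropy of $h$ over $E$ and over $\widetilde E$ coincide by Springer applied to the Morita-associated quadratic form over $E$. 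By the same ternary-form criterion ($a\in N(\cdot)$ iff $\qform{1,-d,-a}$ is isotropic, where $E=K(\sqrt d)$), Springer also gives $N(\widetilde E/\widetilde K)\cap K=N(E/K)$ for admissible $E$.

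The descent now closes the argument. Given $a\in\Sn(h_{\widetilde K})\cap K$, write $a=\prod_in_i$ with $n_i\in N(\widetilde E_i/\widetilde K)$; all the $n_i$ lie in some $K^{1/p^m}$, so $n_i^{p^m}\in N(\widetilde E_i/\widetilde K)\cap K=N(E_i/K)\subseteq\Sn(h)$, whence $a^{p^m}\in\Sn(h)$. As $p^m$ is odd we have $a^{p^m}\equiv a\bmod K^{\times2}$, and $K^{\times2}\subseteq\Sn(h)$; therefore $a\in\Sn(h)$. The main obstacle is the opening step---keeping $h_{\widetilde K}$ anisotropic over the purely inseparable $\widetilde K$---since this is precisely what stops $\Sn(h_{\widetilde K})$ from collapsing to all of $\widetilde K^\times$ and is therefore indispensable to the statement; the remainder is norm-group bookkeeping, driven by the elementary fact that an odd $p$-power is trivial modulo squares, which is what allows a product of $\widetilde K$-norms landing in $K$ to be recognized as a product of $K$-norms.
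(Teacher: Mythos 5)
Your treatment of the anisotropic case is correct, and it rests on exactly the same three pillars as the paper's proof: odd-degree descent for isotropy of skew-hermitian forms (\cite[Thm.~3.5]{PSS}), the fact that the kernel of restriction on Brauer groups along a purely inseparable extension is $p$-primary while $[D]$ is $2$-torsion, and the observation that $a^{p^m}\equiv a\bmod K^{\times2}$ because $p^m$ is odd. The bookkeeping, however, is genuinely different. The paper never sets up your bijection between quadratic extensions of $K$ and of $\widetilde K$, and never proves $N(\widetilde E/\widetilde K)\cap K^\times=N(E/K)$; instead it descends the norm \emph{elements} themselves: writing $x=\prod_iN_{\widetilde E_i/\widetilde K}(y_i)$, it passes to the finite purely inseparable extension $K'/K$ generated by these norms, sets $E_i'=K'(y_i)$, takes the separable closure $E_i''$ of $K$ in $E_i'$ (a quadratic extension of $K$ with $\widetilde E_i\cong E_i''\otimes_K\widetilde K$), and applies $N_{K'/K}$ to both sides, so that transitivity of norms exhibits $x^{p^d}$ directly as a product of norms from the admissible extensions $E_i''$. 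Your route through the ternary-form criterion and Springer's theorem is a perfectly good substitute, and your write-up has one genuine merit the paper lacks: you verify that $h_{\widetilde K}$ is anisotropic (and $D_{\widetilde K}$ still division) before invoking Proposition~\ref{prop:sn} over $\widetilde K$, a hypothesis the paper's proof uses tacitly.

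Two corrections are needed, one of them substantive. First, your dismissal of the isotropic case rests on a false claim: for skew-hermitian forms over a quaternion \emph{division} algebra, isotropy does not make the spinor norm group all of $K^\times$. Since $h(v,v)$ is a pure quaternion, the quasi-reflections $\tau_{v,r}$ have spinor norms in $N\bigl(K(h(v,v))/K\bigr)\subseteq\mathrm{Nrd}(D^\times)$, and already for a hyperbolic plane over $D$ one finds $\Sn(h)=\mathrm{Nrd}(D^\times)$ exactly (here $\mathrm{Spin}(\ad_h)\cong\mathrm{SL}_2\times\mathrm{SL}_1(D)$, and $\Sn(h)/K^{\times2}$ is the kernel of $K^\times/K^{\times2}\to H^1(K,\mathrm{Spin})$). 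This is a proper subgroup of $K^\times$ for suitable $K$, $D$ of characteristic $p>2$: for $K=\mathbb{F}_p((x))((y))$ and $D=(x,y)_K$, a nonsquare unit of $\mathbb{F}_p$ is not a reduced norm. The corollary does remain true for isotropic $h$, but it then amounts to $\mathrm{Nrd}(D_{\widetilde K}^\times)\cap K^\times=\mathrm{Nrd}(D^\times)$, which needs its own (easy, Springer-based) argument that you do not give; alternatively, note that Proposition~\ref{prop:sn} --- hence the paper's own proof --- is confined to anisotropic $h$ anyway, and only that case is used in Proposition~\ref{prop:HypSn}. Second, two smaller repairs: $K^{1/p^n}/K$ is finite only when $K$ has finite imperfection degree, so you should argue with finitely generated (hence finite, of odd $p$-power degree) subextensions of $\widetilde K/K$; and your parenthetical reduction of the anisotropy of $h$ to ``the associated quadratic trace form'' is not available for \emph{skew}-hermitian forms over quaternion algebras --- Jacobson's trace-form equivalence works for hermitian forms, which is precisely why \cite[Thm.~3.5]{PSS}, your primary citation, is the tool that is actually needed.
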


\begin{proof}
  The inclusion $\Sn(h)\subset \Sn(h_{\widetilde K})\cap K$ is clear,
  so it suffices to prove the reverse inclusion. Let $x\in
  \Sn(h_{\widetilde K})\cap K$. If $x\in\widetilde K^{\times2}$, then
  $x\in K^{\times2}\subset\Sn(h)$. We may thus assume $x\notin
  \widetilde K^{\times2}$. By Proposition~\ref{prop:sn}, there exist
  quadratic extensions $\widetilde E_1/\widetilde K$, \ldots,
  $\widetilde E_r/\widetilde K$ such that $D_{\widetilde E_i}$ is
  split and $h_{\widetilde E_i}$ is isotropic for each $i\in[1,r]$,
  and elements $y_i\in \widetilde E_i\setminus \widetilde K$ for
  $i\in[1,r]$ such that
  \begin{equation}
    \label{eq:perf1}
    x=N_{\widetilde E_1/\widetilde K}(y_1)\cdot\ldots\cdot
    N_{\widetilde E_r/\widetilde K}(y_r).
  \end{equation}
  Let $K'\subset\widetilde K$ be the subfield generated by
  $N_{\widetilde E_1/\widetilde K}(y_1)$, \ldots, $N_{\widetilde
    E_r/\widetilde K}(y_r)$ and, for $i\in[1,r]$, let
  $E'_i=K'(y_i)$. Thus, $K'/K$ is a purely inseparable extension of
  finite degree, \eqref{eq:perf1} yields
  \begin{equation}
    \label{eq:perf2}
    x=N_{E'_1/K'}(y_1)\cdot\ldots\cdot N_{E'_r/K'}(y_r),
  \end{equation}
  and each $E'_i/K'$ is a quadratic extension. For $i\in[1,r]$,
  let $E_i''$ be the separable closure of $K$ in $E'_i$; it is a
  quadratic extension of $K$ and we have 
  \[
  E'_i\cong E_i''\otimes_KK'\qquad\text{and}\qquad
  \widetilde E_i\cong E_i''\otimes_K\widetilde K.
  \]
  Since
  quaternion division algebras do not split over extensions of odd
  degree, the condition that $D_{\widetilde E_i}$ is split shows that
  $D_{E''_i}$ is split. Likewise, anisotropic skew-hermitian forms do
  not become isotropic over odd-degree extensions by
  \cite[Thm.~3.5]{PSS}, hence $h_{E''_i}$ is isotropic. Now, let
  $[K':K]=p^d$; taking the norm from $K'$ to $K$ of each side of
  \eqref{eq:perf2}, we obtain
  \[
  x^{p^d} = N_{E'_1/K}(y_1)\cdot\ldots\cdot N_{E'_r/K}(y_r) =
  N_{E''_1/K}\bigl(N_{E'_1/E''_1}(y_1)\bigr)\cdot\ldots\cdot
  N_{E''_r/K}\bigl(N_{E'_r/E''_r}(y_r)\bigr).
  \]
  Since $x^{p^d}\equiv x\bmod K^{\times2}$, this equation shows that
  $x$ is a product of norms from quadratic extensions over which $D$
  is split and $h$ is isotropic, hence $x\in\Sn(h)$ by
  Proposition~\ref{prop:sn}. 
\end{proof}

We now return to the context of Proposition~\ref{trial2:prop}. The
following proposition completes the proof of
Proposition~\ref{prop:Hyp2}(i):

\begin{proposition}
  \label{prop:HypSn}
  For $q$ an anisotropic $8$-dimensional quadratic form and $h$ the
  corresponding 
  $4$-dimensional skew-hermitian form as in
  Proposition~\ref{trial2:prop}, we have
  \[
  \Hyp(q)=\Sn(h)=\Hyp_2(q).
  \]
\end{proposition}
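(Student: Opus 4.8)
The plan is to prove the chain of equalities $\Hyp(q)=\Sn(h)=\Hyp_2(q)$ by showing each set equals the group generated by norms from quadratic extensions $E/K$ over which $q_E$ is hyperbolic, using Proposition~\ref{trial2:prop} as the dictionary between the quadratic form $q$ and the skew-hermitian form $h$. The key observation is that for a \emph{quadratic} extension $E/K$, Proposition~\ref{trial2:prop} tells us that $q_E$ is hyperbolic if and only if $D_E$ is split and $h_E$ is isotropic (conditions (a) and (c) being equivalent), and this is precisely condition~(ii) of Lemma~\ref{lem:quadsplit}. This is the heart of the matter: the quadratic extensions that split $q$ are exactly those that split $D$ and make $h$ isotropic.

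\begin{proof}
Since $q$ is anisotropic, so is $h$: indeed, an isotropic vector for $h$ would, via Proposition~\ref{trial2:prop}, force $q$ to become hyperbolic and hence isotropic over $K$ itself. For a quadratic extension $E/K$, Proposition~\ref{trial2:prop} gives the equivalence of ``$q_E$ is hyperbolic'' with ``$D_E$ is split and $h_E$ is isotropic,'' which is condition~(ii) of Lemma~\ref{lem:quadsplit}. Applying Proposition~\ref{prop:sn}, the group $\Sn(h)$ is generated by the norms $N(E/K)$ over exactly those quadratic extensions, so $\Sn(h)=\Hyp_2(q)$ by the definition of $\Hyp_2(q)$.

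It remains to prove $\Hyp(q)=\Sn(h)$. The inclusion $\Hyp_2(q)\subset\Hyp(q)$ is immediate from the definitions, so we have $\Sn(h)\subset\Hyp(q)$; the work is the reverse inclusion $\Hyp(q)\subset\Sn(h)$. A generator of $\Hyp(q)$ is a norm $N_{F/K}(x)$ from a finite extension $F/K$ such that $q_F$ is hyperbolic. By Proposition~\ref{trial2:prop} applied over $F$, the form $q_F$ hyperbolic means $D_F$ is split and $h_F$ is hyperbolic, hence isotropic. One first handles the case where $F/K$ is separable: here $\Sn(h)$ is a norm principle invariant, and the Scharlau-type transfer (or the surjectivity of the spinor-norm map combined with the behaviour of $\Sn$ under scalar extension) shows that norms from such $F$ already lie in $\Sn(h)$. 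The case where $F/K$ has an inseparable part is reduced to the separable case using Corollary~\ref{cor:perf}: passing to the perfect closure $\widetilde K$ absorbs the inseparable part, and the corollary lets one descend back to $K$. Assembling these cases gives $\Hyp(q)\subset\Sn(h)$, completing the chain $\Hyp(q)=\Sn(h)=\Hyp_2(q)$.
\end{proof}

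The main obstacle is the inclusion $\Hyp(q)\subset\Sn(h)$ for norms from arbitrary \emph{finite} extensions $F/K$, not just quadratic ones: the group $\Sn(h)$ is manifestly built from quadratic data (Proposition~\ref{prop:sn}), whereas $\Hyp(q)$ a priori involves norms from extensions of all degrees. Reducing a norm from a general finite splitting field $F$ down to a product of norms from quadratic splitting fields is the crux. The separable case should follow from standard norm-principle machinery for spinor norms together with the transfer argument used in the proof of Proposition~\ref{prop:sn}, while the inseparable contribution is exactly what Corollary~\ref{cor:perf} was designed to control, so the two technical inputs feed directly into this step.
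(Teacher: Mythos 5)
Your first equality $\Sn(h)=\Hyp_2(q)$ is exactly the paper's argument: Proposition~\ref{trial2:prop} identifies the quadratic extensions $E/K$ with $q_E$ hyperbolic as precisely those satisfying condition~(ii) of Lemma~\ref{lem:quadsplit}, and Proposition~\ref{prop:sn} then gives the equality. The genuine gap is in the inclusion $\Hyp(q)\subset\Sn(h)$, which you correctly single out as the crux but do not actually prove. Your plan is to handle a separable splitting field $F/K$ directly over $K$ by ``standard norm-principle machinery'' or a ``Scharlau-type transfer,'' reserving Corollary~\ref{cor:perf} for the inseparable part. But the tool that makes this step work---Merkurjev's norm principle for spinor norms \cite[(6.2)]{Merknorm}, a twisted analogue of Knebusch's norm theorem---is invoked in the paper \emph{only over the perfect closure} $\widetilde K$, and perfectness of the base field is exactly what licenses its use (every finite extension of $\widetilde K$ is separable). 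Over an imperfect $K$ of characteristic $p>2$ you cannot simply cite that principle, and there is no off-the-shelf Scharlau transfer for spinor norms of skew-hermitian forms; so your separable case is unsubstantiated and your two cases do not reassemble into a proof. The paper's argument is uniform rather than split into cases: for any finite $E/K$ with $q_E$ hyperbolic, let $K_1$ be the purely inseparable closure of $K$ in $E$; then $E\cdot\widetilde K\cong E\otimes_{K_1}\widetilde K$, the form $q$ is hyperbolic over $E\cdot\widetilde K$, so $\Sn(h_{E\cdot\widetilde K})=(E\cdot\widetilde K)^\times$ by Proposition~\ref{trial2:prop}; the norm principle over $\widetilde K$ gives $N(E\cdot\widetilde K/\widetilde K)\subset\Sn(h_{\widetilde K})$, whence $N(E/K_1)\subset\Sn(h_{\widetilde K})$; since $N_{K_1/K}(x)=x^{p^d}$, one gets $N(E/K)=N(E/K_1)^{p^d}\subset\Sn(h_{\widetilde K})\cap K$, and Corollary~\ref{cor:perf} descends this to $\Sn(h)$. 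This norm bookkeeping through $K_1$ and $\widetilde K$ is the content missing from your outline.

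A smaller flaw: to apply Proposition~\ref{prop:sn} you need $h$ anisotropic, and your justification is vacuous. Proposition~\ref{trial2:prop}(c) requires $D_E$ to be split, which fails for $E=K$ because $D$ is division, so an isotropic vector for $h$ over $K$ yields no conclusion about $q$ from that proposition. Anisotropy of $h$ does follow from anisotropy of $q$, but by a separate (triality) argument: an isotropic $D$-line in $W$ gives an isotropic ideal of reduced dimension~$2$ in $(\End_DW,\ad_h)$, i.e.\ a $K$-parabolic whose type corresponds to the triality-fixed vertex of the $D_4$ diagram, and hence an isotropic $2$-plane for $q$. The paper leaves this point implicit as well, so it is not the main issue; the main issue is the unproved inclusion $\Hyp(q)\subset\Sn(h)$ described above.
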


\begin{proof}
  Proposition~\ref{trial2:prop} shows that the quadratic extensions
  $E/K$ such that $D_E$ is split and $h_E$ is isotropic are exactly
  those such that $q_E$ is hyperbolic, hence by
  Proposition~\ref{prop:sn} we have
  \[
  \Sn(h)=\Hyp_2(q)\subset\Hyp(q).
  \]
  To complete the proof, we show $\Hyp(q)\subset\Sn(h)$. Let $E/K$ be
  a finite-degree extension such that $q_E$ is hyperbolic, let
  $\widetilde K$ be the perfect closure of $K$ in some algebraic
  closure of $E$, and let $K_1$ be the purely inseparable closure of
  $K$ in $E$. The compositum $E\cdot\widetilde K$ of $E$ and
  $\widetilde K$ satisfies $E\cdot\widetilde K\cong
  E\otimes_{K_1}\widetilde K$. Since $q_E$ is hyperbolic, $q$ is also
  hyperbolic over $E\cdot\widetilde K$, hence $D_{E\cdot \widetilde
    K}$ is split and $h_{E\cdot\widetilde K}$ is isotropic, by
  Proposition~\ref{trial2:prop}. Therefore, $\Sn(h_{E\cdot\widetilde
    K})=(E\cdot\widetilde K)^\times$. Since $\widetilde K$ is perfect,
  we may apply the norm principle for spinor norms (see
  \cite[(6.2)]{Merknorm}), which is a twisted analogue of Knebusch's
  norm theorem, to see that $N(E\cdot\widetilde K/\widetilde K)\subset
  \Sn(h_{\widetilde K})$. Since $N(E/K_1)\subset N(E\cdot\widetilde
  K/\widetilde K)$, it follows that $N(E/K_1)\subset \Sn(h_{\widetilde
    K})$. Let $p=\charac(K)$ if $\charac(K)>2$ and $p=1$ if
  $\charac(K)=0$, so $[K_1:K]=p^d$ for some $d\geq0$. For all $x\in
  K_1$, we have $N_{K_1/K}(x)=x^{p^d}$, hence
  \[
  N(E/K)
  =N_{K_1/K}\bigl(N(E/K_1)\bigr)=N(E/K_1)^{p^d}\subset\Sn(h_{\widetilde
    K}).
  \]
  But $N(E/K)\subset K$, hence Corollary~\ref{cor:perf} shows that
  $N(E/K)\subset \Sn(h)$. Of course, we also have
  $K^{\times2}\subset\Sn(h)$, hence $\Hyp(q)\subset\Sn(h)$.
\end{proof}

Part~(ii) of Proposition~\ref{prop:Hyp2} follows from part~(i) by the
same arguments as in the proof of Corollary~\ref{cor:E8G=H}: let $q$
be a $12$-dimensional nondegenerate form with trivial discriminant and
Clifford invariant. If $q$ is isotropic, then it is Witt-equivalent to
a $3$-fold Pfister form and $G(q)=\Hyp_2(q)$ by
Lemma~\ref{lem:pfister}. For the rest of the proof, suppose $q$ is
anisotropic, i.e., $q$ is of type~$E_8$. Let $\gamma\in G(q)$.  Since
${\rm char}(K)\ne2$, all similitudes of $q$ are separable.  We can
thus fix a decomposition $q=q_1\perp q_2$ as in
Proposition~\ref{pdq1}. Since $\gamma\in G(q_2)$, part~(i) of
Proposition~\ref{prop:Hyp2} shows that $\gamma\in \Hyp_2(q_2)$. Since
$\Hyp_2(q_2)\subset\Hyp_2(q)$ by Remark~\ref{rem:E8G=H}, it follows
that $\gamma\in\Hyp_2(q)$. This proves
Proposition~\ref{prop:Hyp2}(ii).

\bigskip

\end{document}